%% file: Main.tex
\documentclass[12pt]{amsart}
\usepackage{amsthm,amstext,amsmath,amscd,amssymb,latexsym,mathrsfs}

\usepackage{color}
\usepackage{amsfonts,array}
\usepackage{todonotes}
\usepackage{verbatim}
\usepackage{float}	
\usepackage{pgf,tikz}
\usepackage[page]{appendix}
\usepackage[all,cmtip]{xy}
\usepackage{stmaryrd}
\usepackage{xparse}
\usepackage{listings}
\usepackage{enumitem}
\setlist{nosep, left=0pt}

\usepackage{tabularx,booktabs}

\usepackage{ragged2e} 
\usepackage{needspace}   
\usepackage{ltablex}   
\keepXColumns         
\usepackage{placeins} 

\newcolumntype{L}{>{\RaggedRight\arraybackslash}p{3.5cm}}
\newcolumntype{Y}{>{\RaggedRight\arraybackslash}X}

\usepackage{caption}
\usepackage{subcaption}
\usepackage[T1]{fontenc}
\usepackage{lmodern}
\usepackage{etoolbox}
\usepackage{xcolor}

\usepackage[shortalphabetic]{amsrefs}
\usepackage[all]{xy}
\usepackage{newtxtext}
\usepackage{newtxmath}
\usepackage{framed}
\usepackage{graphicx}
\usepackage{wasysym}
\usepackage{pdfpages}

\usepackage[thinlines,thiklines]{easybmat}

\usepackage{microtype} 

\usepackage{mathtools}  

\usepackage{bbm}

\makeatletter
\def\thm@space@setup{%
  \thm@preskip=5pt \thm@postskip=5pt
}
\makeatother

\makeatletter
\renewcommand\@biblabel[1]{[#1]\quad}%

\@ifpackageloaded{amsrefs}{%
  \ifcsname BibLabel\endcsname
    \usepackage{etoolbox}
    \patchcmd{\BibLabel}{\hfill}{}{}{}%
    \renewcommand{\BibLabel}{%
      \ifcsname Hy@raisedlink\endcsname
        \Hy@raisedlink{\hyper@anchorstart{cite.\CurrentBib}\hyper@anchorend}%
        [\thebib]\quad
      \else
        [\thebib]\quad
      \fi
    }%
  \fi
}{}%
\makeatother

\def\Bl{\operatorname{Bl}}

\numberwithin{equation}{section}

\newcommand{\equ}{\ensuremath{\,=\,}}

\newcommand{\deq}{\ensuremath{\stackrel{\textrm{def}}{=}}}

\DeclareMathOperator{\pr}{pr}

\DeclareMathOperator{\lint}{int}

\newcommand{\Ra}{`\ensuremath{\Rightarrow}'  }
\newcommand{\La}{`\ensuremath{\Leftarrow}'  }

\newcommand{\BA}{{\mathbb{A}}}

\newcommand{\BC}{{\mathbb{C}}}

\newcommand{\BG}{{\mathbb{G}}}

\newcommand{\BN}{{\mathbb{N}}}

\newcommand{\BP}{{\mathbb{P}}}
\newcommand{\BQ}{{\mathbb{Q}}}
\newcommand{\BR}{{\mathbb{R}}}

\newcommand{\BZ}{{\mathbb{Z}}}

\newcommand{\Ff}{{\mathfrak{f}}}

\newcommand{\CA}{{\mathcal A}}
\newcommand{\CB}{{\mathcal B}}
\newcommand{\CC}{{\mathcal C}}
\renewcommand{\CD}{{\mathcal D}}

\newcommand{\CF}{{\mathcal F}}

\newcommand{\CO}{{\mathcal O}}

\newcommand{\CS}{{\mathcal S}}

\DeclareMathOperator{\Spec}{Spec}
\DeclareMathOperator{\Proj}{Proj}

\newcommand{\bProj}{\mathop{\text{\textbf{Proj}}}\nolimits}

\DeclareMathOperator{\im}{im}

\DeclareMathOperator{\coker}{coker}

\DeclareMathOperator{\rank}{rank}

\DeclareMathOperator{\Cl}{Cl}

\DeclareMathOperator{\Cone}{Cone}

\DeclareMathOperator{\Cox}{Cox}

\DeclareMathOperator{\Rel}{Rel}
\DeclareMathOperator{\Hom}{Hom}

\newcommand{\Gen}{{\rm Gen}}
\newcommand{\Rat}{{\rm Rat}}

\makeatletter
\def\thm@space@setup{%
  \thm@preskip=5pt \thm@postskip=5pt
}
\makeatother

\newtheorem*{theorem*}{Theorem}

\newtheorem{theorem}{Theorem}[section]
\newtheorem{lemma}[theorem]{Lemma}

\newtheorem{defprop}[theorem]{Definition and Proposition}

\newtheorem{proposition}[theorem]{Proposition}
\newtheorem{corollary}[theorem]{Corollary} 

\theoremstyle{definition}
\newtheorem{definition}[theorem]{Definition}
\newtheorem{example}[theorem]{Example}

\newtheorem{remark}[theorem]{Remark}

\newtheorem{introthm}{Theorem}

\def\phi{\varphi}
\def\epsilon{\varepsilon}
\def\setminus{\smallsetminus}
\let\oldbullet\bullet
\def\bullet{{\mathchoice{\oldbullet}%
                        {\oldbullet}%
                        {\scriptscriptstyle\oldbullet}%
                        {\oldbullet}}}

\let\emptyset\varnothing

\definecolor{GoetheBlue}{RGB}{0,97,143}
\usepackage[bookmarks=true, colorlinks=true, linkcolor=blue!50!black,
citecolor=GoetheBlue, urlcolor=orange!50!black, pdfencoding=unicode]{hyperref}
\usepackage[left=3.5cm,right=3.5cm,top=3cm,bottom=3cm,footskip=1cm
]{geometry}

\title{Relevant Maps and the algebraic skeleton of simplicial toric prevarieties} 

\author{Felix G\"obler}

\begin{document}





\input{abstract}

\maketitle

\tableofcontents


\section*{Introduction}
Toric varieties are known for their rich structure and are easily accessible via their combinatorial description in terms of fans.
In 1992, David Cox constructed coordinate rings for toric varieties, showing that the coordinate ring of a toric variety $X_\Sigma$ is a polynomial ring graded by a finitely generated abelian group (the divisor class group of $X_\Sigma$, see \cite{Cox}). Almost ten years later, Brenner--Schröer presented a Proj construction for rings graded by finitely generated abelian groups, and proved that for finite polynomial rings, the resulting scheme is a toric prevariety (cf.\ \cite{BS}).

The aim of this paper is twofold:
On one side, we want to understand the translation from polynomial rings (with irrelevant ideal) to systems of fans in both directions.
On the other side, we want to use this dictionary to understand toric varieties algebraically, i.e.\ in terms of their coordinate rings.
To do so, we have to consider several kinds of 'permissible` mappings between rings graded by finitely abelian groups.

Let $D$ be a finitely generated abelian group and $S$ a commutative unital $D$-graded ring. A homogeneous element $f \in S$ is called \emph{relevant} if the group of degrees of units $D^f$ of $S_f$ has finite index in $D$. For basic properties of relevant elements, as well as structural theorems on the \emph{multigraded Proj} construction
\begin{align*}
    \Proj^D(S) \equ \bigcup_{f \text{ relevant}} \Spec(S_{(f)}), 
\end{align*} 
we refer to our previous papers \cite{paper1}, \cite{paper2}, and \cite{paper3}.

Aiming to describe morphisms between Proj schemes, we analyze when relevant elements in multigraded rings are mapped to relevant elements. Since even lower requirements are sufficient to define a morphism or a rational map of multigraded spectra, we will also take a closer look at those special cases.

As an application of these notions of maps, we want to describe the category of simplicial toric prevarieties algebraically.
Given a noetherian multigraded polynomial ring $S$, \cite{BS} showed that $\Proj^D(S)$ is a simplicial toric prevariety with torus $G = \Spec(S_0[D_F])$ ($D_F$ being the free part of $D$, see also Theorem~\ref{thm:ring_to_prev}).
Generalizing the ideas of \cite{Cox} to toric prevarieties, we show that a simplicial toric prevariety $X$ yields a multigraded polynomial ring $S$, that is related to, but not necessarily identical with, the \emph{Cox ring} of $X$.

If we extend the data of the multigraded polynomial ring $S$ by an `irrelevant subset', that is, a subset $B \unlhd S$ of the full irrelevant ideal $S_+$, the abovementioned functors give rise to an anti-equivalence of categories. In particular, conical rings serve as a generalization of the so-called  \emph{bunched rings}, a notion introduced by \cite{CRB}.

\begin{introthm}[\autoref{thm:functor_conical_rings_toric_prev}]
    Let $S$ be a noetherian factorially and effectively $D$-graded polynomial ring over an algebraically closed field.
  Then the category $\mathrm{ToricPrev_s}$ of simplicial toric prevarieties (with morphisms of toric prevarieties as morphisms) is anti-equivalent to the category $\mathrm{RatConRing}$ of conical rings over noetherian polynomial rings together with rational maps of conical rings.
    In particular, multigraded noetherian polynomial rings serve as coordinate rings for toric prevarieties.
\end{introthm}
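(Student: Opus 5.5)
To prove the anti-equivalence, I will construct two contravariant functors and show they are quasi-inverse.
\[
\Phi\colon \mathrm{RatConRing}\to \mathrm{ToricPrev_s}, \qquad \Psi\colon \mathrm{ToricPrev_s}\to\mathrm{RatConRing}.
\]

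\textbf{The functor $\Phi$.} A conical ring $(S,B)$ consists of a noetherian, factorially and effectively $D$-graded polynomial ring $S=k[x_1,\dots,x_n]$ together with an irrelevant subset $B\subseteq S_+$. I send it to
\[
\Proj^D_B(S)\;=\;\bigcup_{f\in B}\Spec(S_{(f)}).
\]
Each chart is taken over a relevant monomial $f$, so each $\Spec(S_{(f)})$ is the affine toric variety of a simplicial cone. By the result of Brenner--Schr\"oer recalled as Theorem~\ref{thm:ring_to_prev}, the union is a simplicial toric prevariety with torus $\Spec(S_0[D_F])$. I should first check that restricting to $B$ rather than all of $S_+$ still glues to an open toric subprevariety. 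This holds because the charts are torus-invariant open subsets of $\Proj^D(S)$.

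A rational map of conical rings is a graded homomorphism $\varphi\colon S'\to S$, compatible with the degree map $D'\to D$, that sends elements of $B'$ to elements whose localizations cover $\Proj_B(S)$. By the earlier analysis of when relevant elements map to relevant elements, such a $\varphi$ induces morphisms of charts $\Spec(S_{(\varphi(f'))})\to \Spec(S'_{(f')})$. These glue to a morphism $\Proj_B(S)\to\Proj_{B'}(S')$ that is equivariant with respect to the induced torus homomorphism. Functoriality is then formal.

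\textbf{The functor $\Psi$.} Given a simplicial toric prevariety $X$ described by a system of fans on a lattice $N$ with rays $\rho_1,\dots,\rho_n$, I follow Cox.
\begin{enumerate}
\item Set $S=k[x_\rho]$, graded by $D=\coker(M\to\BZ^n)$. Effectivity of the grading holds because the rays span, which is where the torus of $X$ is used. Factoriality holds because $\BZ^n$ is free.
\item Take $B$ to be generated by the monomials $x^{\hat\sigma}=\prod_{\rho\notin\sigma}x_\rho$ over the maximal cones $\sigma$ of all the fans in the system.
\item Simpliciality ensures that each $x^{\hat\sigma}$ is relevant. The degrees of the $x_\rho$ with $\rho\notin\sigma$, together with the units coming from the $x_\rho$ with $\rho\in\sigma$, generate a finite-index subgroup, because the rays of $\sigma$ are linearly independent.
\item A toric morphism is a lattice map $N\to N'$ compatible with the fans. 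I send it to the homomorphism $x'_{\rho'}\mapsto\prod_\rho x_\rho^{\langle m_{\rho'},\rho\rangle}$, defined via pullback of torus-invariant divisors as in Cox's functoriality. Because divisor pullback needs only simplicial (hence $\BQ$-Cartier) divisors, I expect to have to use rational maps here and not ring homomorphisms sending $B'$ into $B$. This is exactly why the category is $\mathrm{RatConRing}$.
\end{enumerate}

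\textbf{Quasi-inverse.} I would show $\Phi\Psi(X)\cong X$ by comparing charts: $\Spec(S_{(x^{\hat\sigma})})\cong U_\sigma$, which is Cox's affine computation applied to a simplicial cone, and the gluings agree. For $\Psi\Phi(S,B)\cong(S,B)$, I would recover the rays from the variables and the cones from the monomials in $B$. This requires normalizing $B$, for example by saturating it to the set of monomials whose charts are contained in the union, so that equal prevarieties correspond to isomorphic conical rings.

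\textbf{Main obstacle.} I expect the hardest step to be full faithfulness on morphisms. Every toric morphism $\Proj_B(S)\to\Proj_{B'}(S')$ must come from a unique rational map of conical rings. My approach is to restrict to the tori, where the morphism is a homomorphism of tori composed with a translation. This yields the lattice map, and hence $\varphi$ on monomials. I would then check that $\varphi$ is graded, which follows from compatibility with $M\to M'$, and that it carries $B'$ to elements whose localizations cover. Uniqueness should follow from the hypothesis that the grading is effective and factorial, which pins down $\varphi$ up to the data already fixed by the torus.
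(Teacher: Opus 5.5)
Your plan breaks down at the level of morphisms. You take a rational map of conical rings to be a single global graded homomorphism $\varphi\colon S'\to S$. In step~4 and in your ``main obstacle'' paragraph you then try to attach to each toric morphism a global monomial map $x'_{\rho'}\mapsto\prod_\rho x_\rho^{\langle m_{\rho'},\rho\rangle}$. On a simplicial but non-smooth target, pulled-back invariant divisors are only $\BQ$-Cartier, so these exponents are in general fractional and no such homomorphism exists.

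For a concrete case, let $F\colon\BZ\to\BZ^2$, $1\mapsto(1,0)$, and $\sigma'=\Cone((0,1),(2,-1))$. Since $(1,0)=\tfrac12(0,1)+\tfrac12(2,-1)$, $F$ induces a toric morphism $\BA^1\to U_{\sigma'}\cong\BA^2/\mu_2$. The associated conical rings are $k[x]$ (trivially graded) and $k[y_1,y_2]$, graded by $\BZ/2\BZ$ with $\deg y_1=\deg y_2=\overline{1}$ and $B=(1)$. On degree-zero parts the morphism is $y_1^2,\,y_1y_2,\,y_2^2\mapsto x$. No graded homomorphism $k[y_1,y_2]\to k[x]$ induces this, because $x$ is not a square. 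So your $\Psi$ is undefined on such morphisms, and the fullness you want to prove is false for your category.

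Your category also fails in the other directions:
\begin{itemize}
\item Faithfulness fails. For $t$ in the grading torus, $x_i\mapsto\chi^{\deg x_i}(t)\,x_i$ is a graded automorphism of $S$ that induces the identity on $\Proj^D_B(S)$.
\item Objects are not recovered up to isomorphism. $k[x,y,z]$ with the standard $\BZ$-grading and $(k[x,y,z,w],(xw,yw,zw))$ from Example~\ref{ex:sys_fan_P^2} both give $\BP^2$, but no graded ring isomorphism relates them. Saturating $B$ cannot repair this, since it does not change $S$.
\end{itemize}

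The paper avoids all of this by making morphisms local from the start. By Definition~\ref{def:cat_con}(7), a rational map is an assignment on the relevant elements of $B$ together with compatible homomorphisms of degree-zero localizations $R_{(g)}\to S_{(f)}$; no global homomorphism is required. In the proof, a toric morphism $X\to Y$ is first turned into a map of systems of fans $(F,\Ff)$, using \cite{ANH} and Lemma~\ref{lem:map_on_dinsting_point}. $\Ff$ is then used to assign to each relevant generator $f$ of $B_S$ a relevant generator $\Ff(f)$ of $B_R$. Each restriction $\Spec(S_{(f)})\to\Spec(R_{(\Ff(f))})$ is a morphism of affine toric varieties, hence a homomorphism $\varphi_f\colon R_{(\Ff(f))}\to S_{(f)}$. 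In the example above these are exactly the maps $y_1^2,y_1y_2,y_2^2\mapsto x$, where the fractional exponents have disappeared. This passage to degree-zero local data is the idea missing from your plan.

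A second, smaller point concerns non-separated prevarieties. There the variables must be indexed by rays counted with their chart labels up to gluing, i.e.\ by the invariant prime divisors, not by the rays of $N_\BR$. For $\BP^1$ with doubled origin, $N_\BR=\BR$ has only two rays, but the ring is $k[x,y,z]$ with degrees $e_1,e_2,e_1+e_2$. With unlabelled rays your $\Psi$ would return $\BP^1$.
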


While aspects of toric maps and Cox rings have, of course, been studied before, we are not aware of a detailed account in terms of multigraded rings, nor of one that incorporates Brenner--Schröer’s notion of relevance.
We generally assume that toric (pre)varieties are normal, as in \cite{CLS}.


{\normalfont\bfseries Organization of the article.}
In Section~\ref{sec:mult_proj}, we quickly recall the definition of relevant elements, before diving into several notions of relevant maps between multigraded rings. At first, we will look at those maps without considering the irrelevant ideal as an extra datum.
Section~\ref{sec:toric_prev} is mostly devoted to the construction of an affine system of fans from a multigraded ring and vice versa. The ideas are already present in \cite{Cox} and \cite{BS}, but the correspondence has not yet been settled.
Finally, we will show that the Chow quotient of a toric prevariety remains a toric prevariety.\\


{\normalfont\bfseries Acknowledgments.}
I would like to thank my (former) supervisor, Alex Küronya, for his tremendous support and insightful feedback.
I am also very grateful to Johannes Horn, Andrés Jaramillo Puentes, Kevin Kühn, Jakob Stix, Martin Ulirsch, and Stefano Urbinati for many helpful discussions and valuable comments.

Partially funded by the Deutsche Forschungsgemeinschaft (DFG, German Research Foundation) TRR 326 \textit{Geometry and Arithmetic of Uniformized Structures}, project number 444845124, and by the LOEWE grant \emph{Uniformized Structures in Algebra and Geometry}.
















\section{Functoriality of multigraded Proj}\label{sec:mult_proj}
\input{Functoriality}


\section{Simplicial Toric (Pre)varieties}\label{sec:toric_prev}
\input{Pre_Toric_Proj}

\input{Bibliography}
\end{document}

%% file: abstract.tex
\begin{abstract}
Morphisms between schemes arising from multigraded rings are essential for understanding geometric relationships in algebraic geometry, yet a systematic theory for such maps has been lacking. 
In this paper, we develop a comprehensive framework for rational maps between multigraded Proj schemes by introducing several notions of maps between their underlying multigraded rings. A key challenge is that to induce actual morphisms (rather than just rational maps), the ring homomorphism $\varphi\colon R \to S$ must hit every relevant element in $S$.
To address this, we introduce the use of relevant subsets $B \subseteq S_+$ (where $S_+$ is the ideal generated by all relevant elements), $B \unlhd S$, which allow us to control this condition more flexibly. 

As an application, we show that multigraded noetherian polynomial rings naturally encode combinatorial data, giving rise to systems of fans and thus to toric prevarieties. By leveraging our notion of rational maps with those relevant subsets, we prove that the category of triples $(D, S, B)$ - where $D$ is a finitely generated abelian group, $S$ is a $D$-graded noetherian polynomial ring, and $B \unlhd S$ is a subset of $S_+$ - together with rational maps of conical rings, is anti-equivalent to the category of simplicial toric prevarieties.

\end{abstract}

%% file: Functoriality.tex
Let $D$ be a finitely generated abelian group.
First, we need to recall several definitions from our previous papers.
A commutative unital ring $S$ with a grading by a finitely generated abelian group is called a \emph{multigraded ring}.
We overall assume that multigraded rings are effectively graded, so that the grading group is generated by its support (cf. \cite{paper1}, Example 1.12).

\begin{definition}
    Let $R$ be $D_R$-graded and $S$ be $D_S$-graded rings, where $D_R$ and $D_S$ are finitely generated abelian groups. A \emph{morphism of multigraded rings} is a pair $(\varphi, \alpha_\varphi)$, where $\varphi$ is a ring homomorphism $\varphi\colon R\to S$, and $\alpha_\varphi$ is a group homomorphism $\alpha_\varphi\colon D_R \to D_S$ satisfying $\varphi(R_d) \ \subseteq \ S_{\alpha_\varphi(d)}$ for all $d\in D_R$. If $D_R=D_S$, we will also call $\varphi$ a \emph{morphism of $D_R$-graded rings}. If $\varphi$ is an isomorphism, but $D_R \neq D_S$, we will call $\varphi$ (or rather the induced map $\alpha_\varphi\colon D_R \to D_S$) a \emph{regrading} of $S$. 
\end{definition}

 For homogeneous $f \in S_d$, we its weight cone 
 \begin{align*}
    \CC_D(f) &\deq \overline{\Cone}( \{\deg_D(g) \mid g \text{ is a homogeneous divisor of $f^k$ for some $k\ge 0$}\}) 
\end{align*}
in $D_\BR = D \otimes_\BZ \BR$. Recall that $f \in S_d$ is called \emph{relevant}, if, for example, $\lint(\CC_D(f)) \subseteq D_\BR$ is non-empty, or, if the group of units $D^f$ of $S_f$ has finite index in $D$ (see \cite{paper1}, Lemma 1.15). The ideal $S_+$ generated by all relevant elements in $S$ is called the \emph{irrelevant ideal}, the set of relevant elements is denoted by $\Rel^D(S)$.
If $S$ is factorially graded and noetherian, we can choose a minimal generating system of $S$ by $D$-prime elements and hence construct minimal generators for $S_+$ via monomials in those $D$-prime elements. We denote a corresponding minimal generating set for $S_+$ by $\Gen^D(S)$. For more details, we refer to \cite{paper1}, section 1.2.

\subsection{Relevant Maps}

In the classical Proj setting, rings are $\BN$-graded, and thus, it suffices to study morphisms of $\BN$-graded rings.

\begin{example}\label{ex:maps_special}
    Consider the multigraded rings $R = \BC[x, y, z]$, where $D_R = \BZ^2$, $\deg(x) = (1,0)$, $\deg(y) = (0, 1)$ and $\deg(z) = (1, 1)$ and $S = \BC[x, y, z, w]$, where $D_S = \BZ^3$, $\deg(x) = (1, 0, 0)$, $\deg(y) = (0, 1, 1)$, $\deg(z) = (0, 0, 1)$ and $\deg(w) = (1, 1, 0)$. Then $R_+ = \langle xy, xz, yz \rangle$, and $S_+  = \langle xyz, xyw, yzw, xzw \rangle$.
Now $\Proj^{\BZ^2}(R)$ corresponds to a $\BP^1$ with doubled origin (cf. \cite{paper1}, Example 1.28), whereas $\Proj^{\BZ^3}(S)$ corresponds to a $\BP^1$ with doubled origin and infinity (cf. \cite{ANH}, Example 2.4).

Clearly, there are straightforward inclusions and projections between those two schemes. Hence, we do really need to consider morphisms of multigraded rings, where the grading groups differ.
But this is not the only complication we have to take into account.
For example, the morphism $\varphi\colon R \to S$, $x \mapsto x, y \mapsto y, z \mapsto wz$ is an injective morphism of multigraded rings, which induces $\alpha_\varphi\colon \BZ^2 \to \BZ^3, e_1 \mapsto e_1$ and $e_2 \mapsto e_2 + e_3$. But  $\varphi(R_+) = \langle xy, xwz, ywz \rangle \not\subseteq \Rel^{D_S}(S)$, since $xy$ is not relevant in $S$. But we can view $xy$ as an element of $\varphi(R)$ graded by $\BZ^2$. Thus, in this case, $xy$ is relevant!
\end{example}

An intuitive solution to this problem would be to restrict to the elements that remain relevant.
In the special case where all relevant elements of $S$ lie in the image of $\varphi$, this will induce a morphism of schemes $\Proj^D(S) \to \Proj^D(R)$.
However, note that we only require the local morphisms of multigraded rings $R_{(g)} \to S_{(f)}$ to exist, as they induce a morphism of schemes, as long as $f$ is relevant in $S$ or $f$ divides a relevant element $h$ satisfying $S_{(f)} = S_{(h)}$.
Therefore, we need several notions of morphisms, such that all those situations are reflected.

\begin{definition}[Relevant Morphism]\label{def:rel_loc}
    Let $\varphi\colon R\to S$ be a morphism of multigraded rings. 
    \begin{enumerate}[label=(\arabic*)]
        \item We define the \emph{relevant locus} of $\varphi$ to be
        \begin{align*}
            \Rel(\varphi) \deq  \Rel^{D_S}(S) \cap \varphi(\Rel^{D_R}(R))
        \end{align*}
        and likewise the \emph{generic locus}
        \begin{align*}
            \Gen(\varphi) \deq \Gen^{D_S}(S) \cap \varphi(\Gen^{D_R}(R)).
        \end{align*}

       \item We say that $\varphi$ is a \emph{relevant} morphism of multigraded rings, if 
        \begin{align*}
            \Rel^{D_S}(S) \ \subseteq \ \varphi(\Rel^{D_R}(R)).
        \end{align*}
       \item We define the \emph{rational locus} 
       \begin{align*}
                  \Rat(\varphi) \deq \Rel^{D_R}(R) \cap \varphi^{-1}(\Rel^{D_S}(\varphi(R)))
       \end{align*}
       of $\varphi$ to be the set of all relevant elements that remain relevant under $\varphi$ and denote the \emph{generic rational locus} $\Rat(\varphi) \cap \Gen^{D_R}(R)$ by $\Gen(\Rat)$.

        
        \item We say that $\varphi$ is \emph{rationally relevant}, if
        \begin{align*}
             \Rel^{D_R}(R) \ \subseteq \ \varphi^{-1}(\Rel^{D_S}(\varphi(R)) .
        \end{align*}

        \item We define a \emph{rational map of multigraded rings} $\varphi_r\colon R \dashrightarrow S$ to consist of
        \begin{enumerate}[label=(\roman*)]
            \item A map $R_+ \to h(S)$, such that for all relevant $g \in R_+$, there exists a relevant $f \in S$ such that $\varphi_r(g) \mid f$ satisfying $S_{(f)} = S_{(\varphi_r(g))}$.
            \item A collection of compatible morphisms of multigraded rings $S_{(g)} \to S_{(f)}$ for all $g \in R_+$. 
        \end{enumerate}
    \end{enumerate}
\end{definition}

The term \emph{locus} in Definition~\ref{def:rel_loc} (1) is justified, as $\Rel(\varphi)$ as well as $\Gen(\varphi)$ give rise to open subsets. Concretely,
    \begin{align*}
        U_\text{Rel} &\deq \bigcup_{f \in \Rel(\varphi)} \Spec(S_{(f)}) \subseteq \Proj^D(S) \text{ and } \\
        U_\text{Gen} &\deq \bigcup_{f \in \Gen(\varphi)} \Spec(S_{(f)}) \subseteq \Proj^D(S).
    \end{align*}
The rational locus defines an open subset of $\Proj^{D_R}(R)$, which is
    \begin{align*}
        U_\Rat \deq \bigcup_{g \in \Rat(\varphi)} \Spec(S_{(g)} \subseteq \Proj^{D_R}(R).
    \end{align*}
    Likewise, the image of the rational locus under $\varphi$ defines an open subset of $\Proj^{D_S}(S)$.

\begin{remark}
    A rationally relevant morphism $\varphi \colon R \to S$ gives rise to a rational map $r\colon \Proj^{D_S}(S) \dashrightarrow \Proj^{D_R}(R)$, as it defines a morphism on the open subset $\cup_{f \in \Rel^D(S) \cap \varphi(R_+)} \Spec(S_{(f)})$ of $\Proj^{D_S}(S)$, that is given by all induced local maps $S_{(\varphi(g))} \to R_{(g)}$. In particular, every rationally relevant map of multigraded rings gives rise to a rational map of multigraded rings.
\end{remark}

\begin{example}
    In the situation of Example~\ref{ex:maps_special}, it holds 
    that $\varphi \colon R \to S$ is rational but neither relevant nor rationally relevant. But, if we view $\varphi$ as $\varphi \colon R \to \varphi(R)$, it is relevant.
\end{example}

It is no surprise that each relevant map is also rationally relevant (and therefore also gives rise to a rational map of multigraded rings):

\begin{lemma}\label{lem:rel_loc_im}
    Let $\varphi\colon R\to S$ be a morphism of multigraded rings and $f \in S$ relevant. If there is a homogeneous $g \in R$ such that $\varphi(g) = f$, then $g$ is relevant in $R$.  In particular, if all relevant elements of $S$ are hit, then all relevant elements of $R$ are mapped to relevant elements of $S$.
\end{lemma}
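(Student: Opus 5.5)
The plan is to work with the characterization of relevance through groups of unit degrees recalled above (\cite{paper1}, Lemma 1.15). An element $f$ is relevant if and only if $D_S^f$, the group of degrees of homogeneous units of $S_f$, has finite index in $D_S$. The homomorphism $\varphi$ localizes to a morphism of multigraded rings $\varphi_g\colon R_g\to S_f$, with the same $\alpha_\varphi$. Since $\varphi_g$ sends homogeneous units to homogeneous units, we get $\alpha_\varphi(D_R^g)\subseteq D_S^f$. The task is to transport finite index in the opposite direction, from $D_S^f$ back to $D_R^g$.

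First I would reduce to a statement about divisors. Every homogeneous unit of $R_g$ has the form $h/g^k$, where $h$ is a homogeneous divisor of some power $g^m$. So $D_R^g$ is the group generated by the degrees of homogeneous divisors of powers of $g$, and likewise for $D_S^f$. Equivalently, in the cone picture, $\CC_{D_R}(g)$ has non-empty interior if and only if these degrees span $D_{R,\BR}$. Since $\varphi(g^k)=f^k$, every factorization $g^k=h h'$ maps to a factorization $f^k=\varphi(h)\varphi(h')$. This gives $\alpha_{\varphi,\BR}(\CC_{D_R}(g))\subseteq \CC_{D_S}(f)$.

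The main obstacle is the reverse direction. I need to show that the divisors of $f^k$ in $S$ force enough divisors of $g^k$ in $R$, together with enough rank on the $D_R$ side. Two things must be controlled: the kernel of $\alpha_\varphi$, and divisors of $f^k$ that do not lie in $\varphi(R)$. For the kernel, I would use effectivity of the grading on $R$: $D_R$ is generated by the degrees of homogeneous elements. For an arbitrary homogeneous $r\in R$, I would show that $\varphi(r)$ divides a power of $f$ times a unit in $S_f$, using relevance of $f$, and then try to pull this divisibility back to a relation $r\mid g^N$ in $R$. I expect this pullback step to be the delicate point. In the factorially graded noetherian setting fixed above, I would try first to compare $D_R$-prime factorizations of $g$ with $D_S$-prime factorizations of $f=\varphi(g)$.

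If that comparison goes through, $\CC_{D_R}(g)$ contains a spanning set of degrees, so $g$ is relevant. The ``in particular'' part then follows by contraposition. Suppose every relevant element of $S$ lies in $\varphi(R)$, and consider the relevant elements of $R$ with respect to $\varphi(R)$ as in the definition of $\Rel(\varphi)$. Then any relevant element of $\varphi(R)$ is hit by some $g$, and by the first part that $g$ is relevant. So $\varphi$ induces a bijection between the relevant loci, which means that relevant elements are sent to relevant elements.
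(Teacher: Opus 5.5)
\textbf{The gap.} The step you flag as delicate is where the argument breaks, and it cannot be repaired. You never actually transport finite index from $D_S^f$ back to $D_R^g$; the argument ends with ``if that comparison goes through''. In fact the pull-back of divisibility you propose is false, and so is the first assertion of the lemma without extra hypotheses.

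Take $R=k[u,v]$ and $S=k[x,y]$, both graded by $\BZ^2$ via $\deg u=(1,1)$, $\deg v=(1,0)$, $\deg x=(1,0)$, $\deg y=(0,1)$. Let $\varphi(u)=xy$, $\varphi(v)=x$, so $\alpha_\varphi=\mathrm{id}$. Both rings are effectively and factorially graded noetherian polynomial rings, and $\varphi$ is even injective, so restricting to the factorial setting does not help.
\begin{itemize}
\item The element $f=xy$ is relevant, since the units $x^ay^b$ of $S_{xy}$ realize every degree in $\BZ^2$.
\item The element $g=u$ is not relevant: the homogeneous units of $R_u$ are the $cu^n$, whose degrees form $\BZ(1,1)$.
\item Here $\varphi(v)=x$ divides $f$ and is a unit in $S_f$, yet $v\nmid u^N$ for every $N$.
\end{itemize}
So a relevant element can factor more finely in $S$ than its preimage does in $R$, and comparing prime factorizations cannot change that.

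The kernel of $\alpha_\varphi$ is a second, independent obstruction. Take $R=k[x,y]$ with $\deg x=(1,0)$, $\deg y=(0,1)$, and $S=k[t]$ with $\deg t=1$. Set $\varphi(x)=t$, $\varphi(y)=1$, $\alpha_\varphi(a,b)=a$. Then $t=\varphi(x)$ is relevant, but $x$ is not.

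\textbf{Comparison with the paper.} The paper's proof also starts from $\varphi_g\colon R_g\to S_f$. It then simply asserts that every homogeneous unit of $S_f$ comes from a unit of $R_g$, so that $D_S^f\subseteq\alpha_{\varphi_g}(D_R^g)$, and concludes that $D_R^g$ has finite index.
\begin{itemize}
\item The first example violates this inclusion: the degree $(1,0)$ of the unit $x\in S_f$ is not in $\alpha_\varphi(D_R^u)=\BZ(1,1)$.
\item The second example shows the inclusion would not suffice anyway: there $D_S^f=\alpha_\varphi(D_R^g)$, but $\ker\alpha_\varphi$ is infinite.
\end{itemize}
So the difficulty you isolated is genuine. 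The first part needs additional hypotheses, for example finiteness of $\ker\alpha_\varphi$ together with lifting of units of $S_f$ to units of $R_g$ up to finite index.

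\textbf{The ``in particular'' part.} Your derivation runs in the wrong direction. The first part gives $\varphi(g)\in\Rel(S)\Rightarrow g\in\Rel(R)$, while you need the converse, which no contraposition provides. Your claimed bijection of relevant loci also presupposes injectivity, which you do not have.

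This part can instead be proved directly when $S$ has some relevant element $f=\varphi(g_0)$.
\begin{itemize}
\item If a homogeneous $h\in S$ is invertible in $S_f$, then $S_{hf}=S_f$. So $hf$ is relevant, hence hit, which gives $\deg h\in\alpha_\varphi(D_R)$.
\item Hence $D_S^f\subseteq\alpha_\varphi(D_R)$, so $\alpha_\varphi(D_R)$ has finite index in $D_S$.
\item Your easy inclusion $\alpha_\varphi(D_R^g)\subseteq D_S^{\varphi(g)}$ then shows that $\varphi(g)$ is relevant whenever $g$ is.
\end{itemize}
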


\begin{remark}
    Note that the compatibility condition $\varphi(R_d) \ \subseteq \ S_{\alpha_\varphi(d)}$ implies that for all relevant $g \in R$ there is a relevant $f \in S$ such that $\varphi(g) \mid f$.
\end{remark}

\begin{proof}
    We have a induced map $\varphi_g \colon R_g \to S_f$, where $\alpha_{\varphi_g}$ is surjective, as $S_f$ is periodic. Thus all elements $\deg(h)$ for $h \in (S_f)^\times$ have a preimage in $D_R$. The corresponding elements $g' \in (S_g)^\times$ are mapped to $(S_f)^\times$, i.e. $D^f \subseteq \alpha_{\varphi_g}(D^g) \subseteq D_S$. Thus, $D^g \subseteq D_R$ has to have finite index and $g$ is relevant.
\end{proof}

The actual notion of relevance for maps is a bit unsatisfying, as it coincides with the map being surjective. But we are only interested in the fact that the irrelevant ideal lies in the image. However, we might achieve this by just changing the irrelevant ideal.
For capturing this subtlety, we will define the category of \emph{conical rings} in Section~\ref{sec:conical rings}.

\begin{corollary}
Let $\varphi \colon R\to S$ be a morphism of multigraded rings.
\begin{enumerate}[label=(\roman*)]
    \item $\varphi$ is relevant when viewed as morphism $R \to \varphi(R)$.
    \item $\varphi$ is relevant if and only if $\varphi$ is surjective.
\end{enumerate}
\end{corollary}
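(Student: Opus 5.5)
For (i) I view $\varphi$ as a morphism onto its image, $\varphi\colon R \to \varphi(R)$. The target $\varphi(R)$ is graded by the subgroup $D' \deq \langle \alpha_\varphi(\operatorname{supp} R)\rangle \subseteq D_S$, or equivalently by the image of $\alpha_\varphi$. This keeps it effectively graded, in line with the standing convention. The statement to prove is $\Rel^{D'}(\varphi(R)) \subseteq \varphi(\Rel^{D_R}(R))$. So I take a relevant homogeneous $f \in \varphi(R)$. Since $\varphi(R)_{d'}$ is the sum of the images $\varphi(R_d)$ over $d$ with $\alpha_\varphi(d) = d'$, I can write $f$ as such a sum. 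I then want a homogeneous preimage $g$ of $f$, so that I can apply Lemma~\ref{lem:rel_loc_im} to the map $R \to \varphi(R)$; that lemma gives relevance of $g$ directly, and hence $f \in \varphi(\Rel^{D_R}(R))$.

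The obstacle is that $f$ need not have a single homogeneous preimage, because several degrees $d$ may collapse to the same $d'$. To handle this, I would first note that $\Proj$ and relevance only need to be checked on a generating set of monomials in the images of homogeneous generators. The cleanest route is to reduce to homogeneous elements of the form $f = \varphi(g)$ with $g$ homogeneous. This is because $S_+$, and hence relevance, is tested by the weight cone $\CC_{D'}(f)$, which is generated by degrees of homogeneous divisors of powers of $f$. Alternatively, one can regrade $R$ along $\alpha_\varphi$ so that $\alpha_\varphi$ becomes injective on supports, and then the homogeneous preimage exists. If that turns out to be too fiddly, I would interpret "relevant" in the corollary modulo this identification, which is what the preceding remark suggests.

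For (ii), the direction ``surjective $\Rightarrow$ relevant'' is immediate from (i), since then $\varphi(R) = S$ with the same grading group up to effective grading. For the converse, suppose $\Rel^{D_S}(S) \subseteq \varphi(R)$. The plan is to show that the relevant elements generate $S$ additively, or at least that every homogeneous $s \in S$ lies in the subring generated by relevant elements. For homogeneous $s$, pick a relevant $h$; then $s h^k$ is relevant for large $k$, since it has a nonempty interior weight cone, and the same holds for $s h^k + h^{k'}$-type combinations. I expect the main difficulty to be extracting $s$ itself from products and sums of relevant elements, since division is unavailable. I would try writing $s = (s + h) - h$ where possible, or using that $1$ or units of degree $0$ combined with relevant elements are relevant when $D$ is finite. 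If this fails in general, it indicates the corollary implicitly uses the convention that $S$ is generated by relevant-adjacent elements, as in the polynomial rings of interest.
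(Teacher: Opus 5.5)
Your proposal leaves both substantive steps open, and your hedges are justified: in the stated generality, (i) and both directions of (ii) fail.

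For (i), you need every relevant $f\in\varphi(R)$ to be the image of a relevant element of $R$. Your three remedies do not supply this:
\begin{itemize}
\item Restricting to $f$ of the form $\varphi(g)$ presupposes the conclusion.
\item Regrading $R$ along $\alpha_\varphi$ changes $\Rel^{D_R}(R)$, so it proves something about a different morphism.
\item Even a homogeneous preimage is not enough, because Lemma~\ref{lem:rel_loc_im} does not hold in this generality. The step from $D^f\subseteq\alpha_\varphi(D^g)$ to $[D_R:D^g]<\infty$ breaks down when $\alpha_\varphi$ has a kernel.
\end{itemize}
Here is a counterexample. Take $R=\BC[x,y]$ with $\deg x=e_1$, $\deg y=e_2$ in $\BZ^2$, and $S=\BC[t]$ with $\deg t=1$. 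Let $\varphi(x)=\varphi(y)=t$ and $\alpha_\varphi(a,b)=a+b$. Then $\varphi$ is surjective and $t$ is relevant in $S=\varphi(R)$. But $\Rel^{\BZ^2}(R)=\{cx^ay^b\mid c\neq 0,\ a,b\geq 1\}$ maps only to elements $ct^{a+b}$ with $a+b\geq 2$, so $t$ is not hit. Moreover $\ker\varphi=(x-y)$ is not $\BZ^2$-homogeneous, so $\varphi(R)$ cannot be given the $D_R$-grading either.

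So (i) is only a tautology under extra hypotheses, e.g.\ $\varphi$ injective with $\varphi(R)$ graded by transport of structure as in Example~\ref{ex:maps_special}. The implication surjective $\Rightarrow$ relevant, which you deduce from (i), fails even for bijective $\varphi$. Take the identity of $\BC[x,y]$ with $\alpha_\varphi(a,b)=a+b$: $x$ is relevant for the $\BZ$-grading but not for the $\BZ^2$-grading. The paper's proof does not argue these parts at all; it treats them as immediate.

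For relevant $\Rightarrow$ surjective, your additive plan cannot work. Relevant elements are homogeneous, so $s+h$ is not relevant unless $\deg s=\deg h$. Also, the subring generated by $\Rel^{D_S}(S)$ need not contain all homogeneous elements. Now let $S=\BC[x,y]$ with $\deg x=e_1$, $\deg y=e_2$. Then $\Rel^{\BZ^2}(S)$ generates the subring $R=\BC\oplus xy\,\BC[x,y]$, which misses $x$.

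The paper instead uses divisibility. For homogeneous $g\in S$:
\begin{itemize}
\item choose a relevant $f$ with $g\mid f$ (e.g.\ $f=gh$ with $h$ relevant, as $D^h\subseteq D^{gh}$);
\item lift $f$ to a relevant $f'\in R$;
\item take a divisor of $f'$ mapping to $g$.
\end{itemize}
You correctly sensed that division is the crux, but the last step is not automatic. The inclusion $R\hookrightarrow S$ just described is relevant. Indeed, for $f=x^ay^b$ with $a,b\geq 1$, the elements $x=x^{a+1}y^b/f$ and $y=x^ay^{b+1}/f$ lie in $R_f$ and divide the unit $f$ there, so $D^f=\BZ^2$ already in $R$. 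Yet $x\notin R$, and $x^2y\in R$ has no divisor in $R$ mapping to $x$.

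The finitely generated subring $\BC[xy,x^2y,xy^2,x^2,x^3,y^2,y^3]\subseteq S$ behaves the same way, so noetherianity does not help. Hence neither your argument nor the paper's divisor argument establishes (ii) as stated. An additional structural hypothesis on $R$ and $\varphi$ is required.
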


\begin{proof}
    We only have to show that a relevant map must be surjective. Assume there is a homogeneous $g \in S$ that has no preimage in $R$. Let $f \in S$ be relevant such that $g \mid f$. Now (as $\varphi$ is assumed to be relevant) there exists $f' \in R$ relevant such that $\varphi(f') = f$. Since $\varphi$ is a morphism of multigraded rings, one of the divisors of $f'$, say $g'$, must be mapped to $g$. Thus $\varphi$ is surjective.
\end{proof}

In the remaining part of the section, we will discuss when we can ensure that relevant elements are mapped to relevant elements. In this case, we can always restrict the irrelevant ideal in the target to the image of the relevant elements that remain relevant (giving rise to a rationally relevant map).

\begin{proposition}\label{thm:morph_mult_relevant}
    Let $\varphi\colon R \to S$ be a morphism of multigraded rings.
    Then $\varphi$ maps relevant elements to relevant elements if and only if $\alpha_\varphi$ is surjective.  
\end{proposition}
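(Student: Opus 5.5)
The plan is to work throughout with the unit-group characterization of relevance: a homogeneous $g \in R$ is relevant exactly when the degree group $D_R^g$ of homogeneous units of $R_g$ has finite index in $D_R$ (\cite{paper1}, Lemma 1.15). The basic observation is that $\varphi$ localizes to a morphism of multigraded rings $\varphi_g\colon R_g \to S_{\varphi(g)}$ with the same $\alpha_\varphi$. This map sends homogeneous units to homogeneous units, so
\begin{align*}
\alpha_\varphi(D_R^g) \ \subseteq \ D_S^{\varphi(g)} \ \subseteq \ D_S .
\end{align*}
Here "relevant" in $S$ is read as in the rational locus, i.e.\ relevance of $\varphi(g)$ with respect to the grading of the target. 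We also assume $\varphi(g)\neq 0$, since $0$ is never relevant; if needed this will be added as a nondegeneracy hypothesis.

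For the direction "$\Leftarrow$", let $\alpha_\varphi$ be surjective and $g \in R$ relevant. Then $[D_R : D_R^g] < \infty$. Surjectivity gives that $\alpha_\varphi(D_R^g)$ has finite index in $\alpha_\varphi(D_R) = D_S$, since the index can only drop under a surjective homomorphism. By the displayed inclusion, $D_S^{\varphi(g)}$ contains a finite-index subgroup of $D_S$, so $\varphi(g)$ is relevant. Equivalently, in cone language, $\alpha_{\varphi,\BR}$ is a surjective linear map and hence open, so it sends $\lint(\CC_{D_R}(g))$ into the interior of a cone contained in $\CC_{D_S}(\varphi(g))$, which is therefore nonempty.

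For the direction "$\Rightarrow$", suppose relevant elements map to relevant elements. Pick any relevant $g \in R$; if $R_+$ is empty the statement is vacuous or has to be excluded. Then $\varphi(g)$ is relevant, and we want to show that $D_S^{\varphi(g)}$ lies in $\alpha_\varphi(D_R)$, or at least that $\alpha_\varphi(D_R)$ has finite index. Every element of $S_{\varphi(g)}$ is a fraction with denominator a power of $\varphi(g)$. So if $S$ is effectively graded and generated, as an algebra relevant to the localization, by the image of $R$, then all degrees of units lie in $\alpha_\varphi(D_R)$. This is the step I expect to be the main obstacle. In general $S$ may contain elements whose degrees lie outside $\operatorname{im}\alpha_\varphi$, and the units of $S_{\varphi(g)}$ need not come from $R$; Example~\ref{ex:maps_special} shows exactly this failure when $\alpha_\varphi$ is not surjective. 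My plan is to argue by contraposition. If $\alpha_\varphi$ is not surjective, then, using effectiveness of the grading of $S$ and the relevance of elements, I will produce a relevant $g$ whose image has all its homogeneous divisors' degrees in the proper subgroup $\alpha_\varphi(D_R)$ after the appropriate regrading. I expect this to require reading $\Rel^{D_S}$ on $\varphi(R)$, graded by $D_S$ as in the definition of $\Rat(\varphi)$. Then $\varphi(R)$ is effectively graded only by $\alpha_\varphi(D_R)$, so effectiveness forces $D_S = \alpha_\varphi(D_R)$ for relevance to hold. Making this effectiveness convention precise, namely the passage $D_S \supseteq \alpha_\varphi(D_R)$ of finite index implies equality for effective gradings, is where I would concentrate the care.
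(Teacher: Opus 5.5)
The direction $\Leftarrow$ of your proposal is correct. The direction $\Rightarrow$ cannot be proved, because it is false as stated.

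Your argument for $\Leftarrow$ is correct and agrees in substance with the paper's: you phrase it through unit groups, the paper through weight cones. It actually only uses $[D_S:\alpha_\varphi(D_R)]<\infty$, since $[D_S:\alpha_\varphi(D_R^g)]\le[D_S:\alpha_\varphi(D_R)]\cdot[D_R:D_R^g]$.

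Here is a counterexample to $\Rightarrow$. Let $R=k[z]$ with $D_R=\BZ$ and $\deg z=1$. Let $S=k[x,y]$ with $D_S=\BZ^2$, $\deg x=e_1$, $\deg y=e_2$. Set $\varphi(z)=xy$ and $\alpha_\varphi(1)=e_1+e_2$. The relevant elements of $R$ are the $cz^m$ with $c\neq0$ and $m\ge1$. Their images $c(xy)^m$ are relevant in $S$, since $S_{xy}=k[x^{\pm1},y^{\pm1}]$ has homogeneous units in every degree. Yet $\alpha_\varphi(D_R)=\BZ(e_1+e_2)$ does not even have finite index, so not even $\alpha_{\varphi,\BR}$ is surjective. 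An equal-rank example is $k[x]\to k[y]$, $x\mapsto y^2$, with $\alpha_\varphi=2\cdot\mathrm{id}_{\BZ}$.

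This is exactly the obstruction you named: units of $S_{\varphi(g)}$ whose degrees lie outside $\im\alpha_\varphi$. The paper's proof of $\Rightarrow$ passes over it by asserting that the homogeneous divisors of $\varphi(g)$ are the images of the homogeneous divisors of $g$. That fails here ($x\mid xy$), and already in Example~\ref{ex:maps_special} ($w\mid\varphi(z)=wz$). Your contrapositive plan fails for the same reason. No choice of $g$ controls the homogeneous divisors of $\varphi(g)$ in $S$, and those are what relevance in $S$ depends on.

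Your fallback reads relevance of $\varphi(g)$ in the graded subring $\varphi(R)\subseteq S$ with grading group $D_S$, as in the definition of $\Rat(\varphi)$. This gives a valid argument, but only up to finite index. The ring $\varphi(R)$ is supported in $\alpha_\varphi(D_R)$, so every homogeneous unit of $\varphi(R)_{\varphi(g)}$ has degree in $\alpha_\varphi(D_R)$. Hence a single relevant $g$ whose image is relevant and non-nilpotent forces $[D_S:\alpha_\varphi(D_R)]<\infty$. Your plan breaks at the passage from finite index to equality:
\begin{enumerate}
\item If you require $\varphi(R)$ itself to be effectively $D_S$-graded, surjectivity of $\alpha_\varphi$ follows at once from the support inclusion. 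Relevance then plays no role, so the argument is circular.
\item If you only use the standing effectiveness of $S$, the passage is false, as $k[y^2]\subseteq k[y]$ in the second example shows.
\end{enumerate}

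What is actually provable is the following. With relevance read in $\varphi(R)$, $\varphi$ preserves relevance if and only if $\alpha_\varphi(D_R)$ has finite index in $D_S$, i.e.\ $\alpha_{\varphi,\BR}$ is surjective. This assumes some relevant element of $R$ has non-nilpotent image. With relevance read in $S$, only $\Leftarrow$ holds in general, and it too needs only the finite-index hypothesis.
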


\begin{proof}
\La Let $g \in R$ be relevant. We have to show that $f = \varphi(g) \in S$ is also relevant. The degrees of the homogeneous divisors of $g$ generate a maximal-dimensional cone. Now by surjectivity
\begin{align*}
    \dim_\BR( \underbrace{\alpha_{\varphi, \BR} (\CC_{D_R}(g))}_{= \CC_{\alpha_\varphi}(f)}) \equ \rank(D_S),
\end{align*} 
hence $f$ is relevant. \\
\Ra Let $g \in R$ and $f = \varphi(g) \in S$ be relevant. We are done if we show that the degrees of the homogeneous divisors of $f$ are in the image of $\alpha_\varphi$. By the compatibility condition on the gradings and since $\varphi$ is a homomorphism of rings, the homogeneous divisors of $f$ are the images of the homogeneous divisors of $g$. In particular, the generators of $\CC_{D_S}(f)$ are exactly the generators of $\CC_{D_R}(g)$ that do not lie in $\ker(\alpha_\varphi)$. Hence $\alpha_{\varphi, \BR}$ maps maximal-dimensional cones to maximal-dimensional cones and thus $\alpha_\varphi$ is surjective
\end{proof}

For polynomial rings with equal grading group, this result simplifies to:

\begin{corollary}\label{thm:special_morph_relevant}
Let $R, S$ be multigraded polynomial rings with finitely many variables, both graded by the group $D$.
    Let $\varphi \colon R \to S$ be a morphism of $D$-graded rings. Then $\alpha_\varphi$ is surjective. In particular, every morphism of $D$-graded rings maps relevant elements to relevant elements.
\end{corollary}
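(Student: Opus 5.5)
The plan is to reduce the statement to Proposition~\ref{thm:morph_mult_relevant}. That proposition says $\varphi$ maps relevant elements to relevant elements exactly when $\alpha_\varphi$ is surjective, so the ``in particular'' part follows once we know that $\alpha_\varphi$ is surjective onto $D$ as a map of groups, not merely after tensoring with $\BR$.

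\textbf{Step 1: fix what ``morphism of $D$-graded rings'' means.} The definition earlier in the paper says we call $\varphi$ a morphism of $D$-graded rings when $D_R=D_S=D$. I read this in the sense used throughout, and in Corollary~\ref{thm:special_morph_relevant}: $\varphi$ respects the given $D$-gradings of $R$ and $S$, i.e.\ $\varphi(R_d)\subseteq S_d$ for all $d\in D$. Under this reading the pair $(\varphi,\mathrm{id}_D)$ satisfies the compatibility condition, and I take $\alpha_\varphi=\mathrm{id}_D$.

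\textbf{Step 2: show that no other choice of $\alpha_\varphi$ is forced.} Let $R=K[x_1,\dots,x_n]$. By the standing assumption that gradings are effective, $D$ is generated by the support of $R$, hence by the degrees $\deg(x_i)$. For each $i$ with $\varphi(x_i)\neq 0$, the element $\varphi(x_i)$ is a nonzero homogeneous element of $S$. It lies in $S_{\deg(x_i)}\cap S_{\alpha_\varphi(\deg(x_i))}$, so $\alpha_\varphi(\deg(x_i))=\deg(x_i)$. Hence $\alpha_\varphi$ agrees with the identity on the subgroup generated by the degrees of the variables that are not killed by $\varphi$. On degrees of variables killed by $\varphi$, the compatibility condition is vacuous and imposes nothing. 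This is exactly why we use the normalisation $\alpha_\varphi=\mathrm{id}_D$ that comes with being a morphism of $D$-graded rings, rather than an arbitrary compatible $\alpha$.

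With $\alpha_\varphi=\mathrm{id}_D$, the map $\alpha_\varphi$ is bijective, in particular surjective onto $D$. Proposition~\ref{thm:morph_mult_relevant} then yields that every relevant $g\in R$ has relevant image $\varphi(g)\in S$.

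\textbf{Expected obstacle.} The only delicate point is Step~2, namely whether $\alpha_\varphi$ is determined by $\varphi$. The finiteness of the number of variables and the effective grading are what make $D$ generated by finitely many degrees $\deg(x_i)$. Thus it suffices to compare $\alpha_\varphi$ with the identity on these generators.

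My first attempt to avoid relying on the convention would be to show that the nonzero images $\varphi(x_i)$ already have degrees generating $D$. This is not automatic if $\varphi$ kills variables. So I would state explicitly that $\alpha_\varphi$ is the identity, which is the part of the definition the corollary relies on.
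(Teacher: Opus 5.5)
Your argument is correct under the reading you state, $\alpha_\varphi=\mathrm{id}_D$, but it runs in the opposite direction from the paper's.

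The paper never argues surjectivity of $\alpha_\varphi$ separately. It could only come from the direction ``$\Rightarrow$'' of Proposition~\ref{thm:morph_mult_relevant}, and that direction is not valid in general: $\varphi(x)=x^2$ on $\BC[x]$ with $\deg x=1$ and $\alpha_\varphi(d)=2d$ preserves relevance, yet $\alpha_\varphi$ is not onto. Instead, the paper takes a relevant $f\in R$ and uses that $\CC_D(f)$ is polyhedral (\cite{paper1}, Lemma~1.20), with generators $c_i$ coming from homogeneous divisors. It then observes that these divisors map to homogeneous divisors of $\varphi(f)$ of degrees $\alpha_\varphi(c_i)$, so $\CC_D(\varphi(f))$ is full-dimensional and $\varphi(f)$ is relevant.

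You get surjectivity for free from the normalization. Relevance of images then follows from ``$\Leftarrow$'', or directly from $D^{g}\subseteq D^{\varphi(g)}$, since $\varphi$ sends homogeneous units of $R_g$ to homogeneous units of $S_{\varphi(g)}$ of the same degree. Your route needs neither the polynomial structure nor polyhedrality, so it works for arbitrary $D$-graded rings.

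It also isolates the hypothesis everything hinges on. The paper's step that the $\alpha_\varphi(c_i)$ span a maximal-dimensional cone tacitly needs $\alpha_{\varphi,\BR}$ to be onto. If $\alpha_\varphi$ is an arbitrary compatible endomorphism of $D$, as the literal definition permits, both assertions fail. Take $R=S=\BC[x,y]$ with $\deg x=e_1$, $\deg y=e_2$, $\varphi(x)=x$, $\varphi(y)=1$, $\alpha_\varphi(e_1)=e_1$, $\alpha_\varphi(e_2)=0$. Then the relevant element $xy$ is sent to $x$, which is not relevant in $S$, and $\alpha_\varphi$ is not onto.

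In return, the paper's version keeps the argument in the weight-cone language used later for fans and GIT chambers. Your Step~2 is a harmless consistency check and can be dropped.
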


\begin{proof}
    Let $f \in R$ be relevant. Then $\CC_{D}(f)$ is polyhedral by \cite{paper1}, Lemma 1.20, say generated by $c_1, \ldots, c_r \in \BZ^r$. Since $\varphi(R_{c_i}) \subseteq S_{\alpha_\varphi(c_i)}$, $\CC_{D}(\varphi(f))$ has to be generated by $c_i' = \alpha_\varphi(c_i)$, i.e.\ $\CC_{D}(\varphi(f))$ has to be maximal-dimensional. In particular, $\varphi(f)$ is relevant.
\end{proof}

\begin{example}
    In the situation of Example~\ref{ex:maps_special}, clearly $\varphi \colon R\to S$ does not map relevant elements to relevant elements, and $\alpha_\varphi$ is not surjective. But viewed as map $\varphi'\colon R \to \varphi(R)$, where $S' = \varphi(R) = \BC[x, y, zw]$, we can see that $\alpha_{\varphi'}$ is surjective, as $S'$ can be seen as $\BZ^2$-graded ring, being isomorphic to $R$ as graded ring. In particular, $\Proj^{D_S}(S)$ and $\Proj^{D_R}(R)$ are birational.
\end{example}


\subsection{Conical Rings and Functoriality}\label{sec:conical rings}

It is apparent that $\varphi\colon R \to S$ being relevant depends on the irrelevant ideals $R_+$ and $S_+$. 
If we take certain subsets of the full irrelevant ideal into account, i.e.\ subsets $B_R \subseteq R_+$ that are also ideals $B_R \unlhd R$ such that $\varphi(B_R) \subseteq S_+$, we get a more flexible notion.
In particular, we can force $\varphi$ to be relevant by choosing a subset of the irrelevant ideal, that is, choosing a subset of relevant elements.

We want to make this choice to be a part of the data in order to be able to distinguish objects on the ring side. 
The resulting space is always an open subset in the respective Proj by definition, as it is given by gluing basic affine open subsets of Proj.
Since the ring associated to a system of fans will not be unique, we establish so-called \emph{conical rings} and their rational maps to describe such situations.

The terminology is motivated by the observation that each $\Spec(S_{(f)})$ corresponds to a convex cone, when $S$ is a noetherian polynomial ring. In particular, a subset $B \subseteq S_+$ corresponds to a system of cones, giving rise to a system of fans (see Section~\ref{sec:sys_fans}), and in special cases, to a fan. 

The material in this section has not, as far as we are aware, been treated systematically before. We incorporate the choice of relevant subsets into the ring data and develop morphisms, rational maps, and birationality in this context.

\begin{defprop}[Category of conical rings]\makebox{}{}\label{def:cat_con}
Let $S$ be a multigraded ring with grading group $D_S$.
Recall that $h(S)$ denotes the set of homogeneous elements of the multigraded ring $S$ and $\Rel^D(S)$ denotes the set of relevant elements of $S$. 
\begin{enumerate}[label=(\arabic*)]
    \item We call a pair $(S, B)$ a \emph{conical ring}, if $B \unlhd S$ is a homogeneous ideal satisfying $B \subseteq  S_+$.

    \item We define the set of $B$-relevant elements as
    \begin{align*}
        \Rel^{D_S}_B(S) \deq \Rel^{D_S}(S) \cap B.
    \end{align*}

    \item Let $\varphi\colon R\to S$ be a morphism of multigraded rings and $(R, B_R)$, $(S, B_S)$ conical rings. Then we define the \emph{$B_S$-relevant locus of $\varphi$} to be
    \begin{align*}
        \Rel_{B_S}(\varphi)\equ \{f \in \Rel^{D_S}_{B_S}(S) \mid \exists g \in \Rel^{D_R}_{B_R}(R): \varphi(g) = f\}.
    \end{align*}

    \item We define a \emph{morphism} $\Phi \colon (R, B_R) \to (S, B_S)$ of \emph{conical rings} to be a morphism of multigraded rings $\varphi\colon R \to S$ that is $B_S$-relevant, i.e.
    \begin{align*}
        B_S \equ  \Rel_{B_S}(\varphi).
    \end{align*}
    
   \item  An isomorphism of conical rings is a morphism $\Phi \colon (R, B_R) \to (S, B_S)$ of conical rings that admits an inverse $\Psi \colon (S, B_S) \to (R, B_R)$ that is also a morphism of conical rings.

   \item We define a \emph{rationally relevant morphism} of conical rings $\Phi\colon (R, B_R) \to (S, B_S)$ to be a morphism of multigraded rings $\varphi\colon R \to S$ such that $\Rat(\varphi) = B_R$. In other words, $\Phi$ defines a morphism $\Phi'\colon (R, B_R) \to (\varphi(R), \varphi(B_R))$ of conical rings.
   

   \item  We define a \emph{rational map} $\Phi \colon (R, B_R) \dashrightarrow (S, B_S)$ of conical rings to be 
    \begin{enumerate}[label=(\roman*)]
        \item a map $\varphi\colon B_R \to h(S)$, such that for all relevant $g \in B_R$ there exists a relevant $f \in B_S$ satisfying $\varphi(g) \mid f$ and $S_{(f)} = S_{(\varphi(g))}$.

        \item a collection of morphisms of multigraded rings $\phi_g \colon R_{(g)} \to S_{(f)}$ for all $g \in B_R$ relevant that are compatible on overlaps.
    \end{enumerate}   
    \item A rational map $\Phi \colon (R, B_R) \dashrightarrow (S, B_S)$ of conical rings is called \emph{birational}, if there exists $B'_S \subseteq S_+$ such that for all relevant $g \in B_R$ there exists a unique relevant $f \in B'_S$ with $\varphi(g) = f$ and all local maps $\varphi_{g}$ are isomorphisms (i.e. if $\varphi$ gives rise to an isomorphism $(R, B_R) \to (S', B'_S)$ for some subring $S' \subseteq S$).    
\end{enumerate}

Conical rings together with morphisms of conical rings give rise to a category, which we denote by $\mathrm{ConRing}$. 
More importantly, conical rings with rational maps as morphisms also define a category, which we denote by $\mathrm{RatConRing}$.
\end{defprop}

\begin{remark}\label{rem:choice_fg}
Given a rational map $\Phi:(R,B_R)\dashrightarrow (S,B_S)$, by (i) we may (and will) fix for every relevant $g\in B_R$ an element $f_g\in B_S$ with
\[
\varphi(g)\mid f_g\quad\text{and}\quad S_{(f_g)}=S_{(\varphi(g))}.
\]
If $\Psi:(S,B_S)\dashrightarrow (T,B_T)$ is a second rational map, then applying (i) of $\Psi$ to each $f_g\in B_S$ yields elements $h_{f_g}\in B_T$ with
\[
\psi(f_g)\mid h_{f_g}\quad\text{and}\quad T_{(h_{f_g})}=T_{(\psi(f_g))}.
\]
Hence the local composites $R_{(g)}\xrightarrow{\phi_g}S_{(f_g)}\xrightarrow{\psi_{f_g}}T_{(h_{f_g})}$ are well-defined for every $g\in B_R$, and compatibility on overlaps ensures they glue.
\end{remark}

\begin{proof}
    We have to show that the composition of morphisms is well-defined and associative. Thus let $\Phi\colon (R, B_R) \to (S, B_S)$ and $\Psi\colon (S, B_S) \to (T, B_T)$ be morphisms of conical rings, such that $\varphi\colon R \to S$ and $\psi\colon S \to T$ are morphisms of multigraded rings. For $h \in B_T$ we find $f \in B_S$, for which we find $g \in B_R$ such that $\varphi(g) = f$ and $\psi(f) = h$. In particular, $h = (\psi \circ \varphi)(g)$, i.e. $\psi \circ \varphi$ is relevant.
    The associativity follows from the fact that morphisms of conical rings are morphisms of rings. The functoriality of $\alpha_\varphi$ and $\alpha_\psi$ is trivial. 

    Composition of rational maps follows immediately from Remark~\ref{rem:choice_fg}. Associativity and identities are clear from the corresponding properties of the local maps. As local maps compose and compatibility on overlaps is preserved, $\mathrm{RatConRing}$ forms a category.
\end{proof}

\begin{definition}\label{def:rel_proj}
    Let $(S, B)$ be a conical ring, where $B \subseteq S_+$ is an ideal in $S$. Then we define the open subscheme
    \begin{align*}
        \Proj^D_B(S) \deq \bigcup_{f \in \Rel^D_B(S)} \Spec(S_{(f)}) \ \subseteq\ \bigcup_{f \in \Rel^D(S)} \Spec(S_{(f)}) \equ \Proj^D(S).
    \end{align*}
    Furthermore, if a minimal generating system of $B \cap \Rel^D(S)$ exists, we will denote it by $\Gen^D_B(S)$.
\end{definition}

\begin{remark}
\begin{enumerate}[label=(\arabic*)]
    \item In the $\BN$-graded standard case, i.e.\ for $S = \BC[x_0, \ldots, x_n]$, we do not need to add this extra layer of complexity, as a graded morphism $\varphi\colon S \to R$ automatically defines the image of $S_+ = (x_0,\ldots, x_n)$. Since relevant elements are exactly elements of degree $d$ for $d \neq 0$, we see that for defining a graded morphism of rings, it is enough to ensure that $\varphi(R_+) \subseteq S_+$.
    \item For $B_R = R_+$ and $B_S = S_+$, a morphism of conical rings $\Phi\colon (R, B_R) \to (S, B_S)$ is just a surjective morphism of multigraded rings. On the other hand, the corresponding morphism of multigraded rings $\varphi$ need no longer be surjective in order to be relevant.

    \item If there is more than one $f \in B_S$ such that $\varphi(g) \mid f$ in (7), then each choice of $f$ (if not already in the image) defines a rationally relevant morphism, respectively a rational map.

    \item If $\varphi\colon R \to S$ is rationally relevant such that for all relevant $g \in B_R$ there exists a unique relevant $f \in S$ satisfying $\varphi(g) = f$ and $S_{(f)} = \varphi(R)_{(\varphi(g))}$, then the associated map $\Phi \colon (R, B_R) \to (S, \varphi(B_R))$ is a morphism of conical rings. Note that $\varphi$ restricts to an isomorphism of conical rings $(R, B_R) \to (\varphi(R), \varphi(B_R))$ if $\varphi$ is injective.

    \item If $S$ is factorially graded and noetherian, the conditions in (4), (6), and (7) in Definition~\ref{def:cat_con} only have to be checked on the generic or rational locus.
    \item Birationality defines an equivalence relation, where it holds that 
    \begin{align*}
        (S_1, B_1) \ \sim \ (S_2, B_2) \ \iff \ \Proj^{D_1}_{B_1}(S_1) \equ \Proj^{D_2}_{B_2}(S_2).
    \end{align*}
\end{enumerate}    
\end{remark}

The following Lemma is of great importance. It allows us to identify a conical ring $(S, B)$, with $B \subsetneq S_+$ being properly contained, with a subring $S' \subseteq S$ and its irrelevant ideal $S'_+ = B$.

\begin{lemma}\label{lem:subring_localization}
    Let $(S, B)$ be a conical ring such that $B \subsetneq S_+$ is a proper subset.
    Then there exists a subring $S' \subseteq S$ satisfying $S'_+ = B$ and $S_{(f)} = S'_{(f)}$ for all relevant $f \in B$. In particular,
    \begin{align*}
        \Proj^D_B(S) \equ \Proj^D(S').
    \end{align*}
    Therefore, $(S', S_+) \to (S, B)$ is an injective morphism of conical rings.
\end{lemma}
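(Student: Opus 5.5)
The natural construction is to take $S'$ to be the subring generated by the degree-zero part and the relevant elements of $B$. Concretely, set
\begin{align*}
    S' \deq S_0\bigl[\,\Rel^D_B(S)\,\bigr] \ \subseteq\ S,
\end{align*}
or, in the factorially graded noetherian case, $S_0$ adjoined the finitely many monomials in $\Gen^D_B(S)$. This is a $D$-graded subring of $S$. Every $f\in\Rel^D_B(S)$ lies in $S'$ and is still relevant there, since its weight cone in $S'$ contains $\deg f$ and its homogeneous divisors in $S'$ include the powers. The point to make sure of is that $\CC_D(f)$ computed in $S'$ is still full-dimensional. For this I would argue via units: $D^f\subseteq D$ has finite index in $S$, and I want the same in $S'_f$.

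The key step is the equality of degree-zero localizations $S'_{(f)}=S_{(f)}$ for $f\in\Rel^D_B(S)$. The inclusion $S'_{(f)}\subseteq S_{(f)}$ is clear. For the reverse, take $a/f^n$ with $a\in S_{n\deg f}$. Since $B$ is an ideal, $af\in B$. Since $f$ is relevant, $af$ divides a relevant element, and in fact $af\cdot f^k$ is relevant for suitable $k$ (\cite{paper1}, Lemma 1.15: products with relevant elements of full-dimensional weight cone stay relevant). Thus $af^{k+1}\in\Rel^D_B(S)\subseteq S'$, and
\begin{align*}
    \frac{a}{f^n} \equ \frac{af^{k+1}}{f^{n+k+1}} \in S'_{(f)}.
\end{align*}
The same trick applied to units shows $D^f$ computed in $S'_f$ equals $D^f$ in $S_f$. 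Hence relevance is preserved, and $B\cap\Rel\subseteq S'_+$.

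Next I would show $S'_+=B$. The inclusion $S'_+\supseteq$ the relevant part of $B$ is shown above. Conversely, a relevant element of $S'$ is a polynomial in elements of $\Rel^D_B(S)$ with coefficients in $S_0$. Its homogeneous components of nonzero relevant degree therefore lie in $B$, because $B$ is an ideal containing the generators. This is where I expect the main obstacle: a relevant element of $S'$ could a priori be a unit times something in $S_0$. Effectiveness of the grading and $B\subsetneq S_+$ exclude degree-zero relevant elements unless $D$ is finite, and that degenerate case I would treat separately. Up to taking the ideal generated, one gets $S'_+=B\cap S'$, which is what the statement intends.

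Finally, gluing the equal affine charts $\Spec S_{(f)}=\Spec S'_{(f)}$ over $f\in\Rel^D_B(S)$, which by the above are exactly the relevant elements of $S'$ up to radical, gives $\Proj^D_B(S)=\Proj^D(S')$. The inclusion $S'\hookrightarrow S$ is then a morphism of multigraded rings with $\alpha=\mathrm{id}$. By construction every element of $\Rel^D_B(S)$ is hit by a relevant element of $S'$, so it is $B$-relevant in the sense of Definition~\ref{def:cat_con}(4), giving the injective morphism of conical rings.
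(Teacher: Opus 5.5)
Your route differs from the paper's. The paper adjoins to $S_0$ all numerators $g$ of degree-zero fractions $g/f'$ with $f'$ dividing some $f\in B$. It asserts that the resulting $S'$ contains every homogeneous divisor of every element of $B$, and declares the equality of degree-zero localizations clear. You take the smaller ring $S'=S_0[\Rel^D_B(S)]$ and use $a/f^n=af/f^{n+1}$ with $af\in\Rel^D_B(S)$. This holds because $B$ is an ideal and $f\mid af$, so $af$ is already relevant and the extra factor $f^k$ is unnecessary. Since this works for every homogeneous $a$, it gives $S'_f=S_f$. That yields both $S'_{(f)}=S_{(f)}$ and the relevance of $f$ in $S'$, which the paper does not verify. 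This half of your argument is correct and more careful than the paper's.

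The gap is in the converse. You must show that $S'$ acquires no relevant elements outside $\Rel^D_B(S)$; otherwise $\Proj^D(S')$ may have extra charts. Your claim that effectiveness and $B\subsetneq S_+$ exclude degree-zero relevant elements when $D$ is infinite is false, and so is $S'_+=B\cap S'$. Take $S=\BC[x,y]$ with $\deg x=1$, $\deg y=-1$, and $B=(x^2)\subsetneq S_+=(x,y)$. Then $x^2$ and $x^2y^4$ lie in $\Rel^D_B(S)\subseteq S'$, and $x^2\cdot x^2y^4=(xy)^4$, so $x^2$ is a unit of degree $2$ in $S'_{xy}$. Hence $xy\in S_0\subseteq S'$ is relevant in $S'$, but $xy\notin B$.

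What is true, and suffices, is that every relevant $g\in S'$ has a power in $\Rel^D_B(S)$. If $\deg g\neq 0$, then $g\in B$ by your argument, and $g$ is relevant in $S$ because units of $S'_g$ are units of $S_g$. If $\deg g=0$, choose a homogeneous unit $a/g^n$ of nonzero degree in $S'_g$ with inverse $a'/g^m$; it exists because $D^g$ has finite index in the infinite group $D$. Then $a\in B$, and $g^k(aa'-g^{n+m})=0$ for some $k$, so $g^{k+n+m}\in B$. Consequently $\Proj^D(S')=\bigcup_{f\in\Rel^D_B(S)}\Spec(S'_{(f)})$. Since products of elements of $\Rel^D_B(S)$ stay in $\Rel^D_B(S)$, the charts and overlaps agree with those of $\Proj^D_B(S)$ by $S'_f=S_f$. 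So $S'_+=B$ holds only up to radical, which is all the Proj identity needs; the paper's proof passes over this point as well.

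Finally, the finite case cannot be ``treated separately''. If $D$ is finite, $1$ is relevant in every $D$-graded subring, so $S'_+=B$ is impossible. For example, take $S=\BC[x]$ graded by $\BZ/2\BZ$ and $B=(x)$. Your $S'$ is $S$ itself, with $\Proj^D(S')=\Spec(\BC[x^2])$, whereas $\Proj^D_B(S)=\Spec(\BC[x^{\pm 2}])$.
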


\begin{proof}
We have to construct $S'$ in such a way that it contains all possible numerators of elements of degree zero with $f \in B$ as denominator (where a lot of factors might cancel out). Therefore we define 
\begin{align*}
    S' \deq S_0\left[\left\{g \in S \mid \deg\left(\frac{g}{f'}\right) = 0 \text{ for some } f \in B \text{ such that } f'\mid f\right\}\right], 
\end{align*}
where we assume all $\frac{g}{f'}$ to be either in reduced fractional form or of type $\frac{f}{f}$. 
Then $B \subset S'$ by construction. As $S'$ contains all homogeneous divisors of all $f\in B$, it is also clear that each $S'_{(f)}$ contains the same rational functions of degree zero as $S_{(f)}$.
Hence it holds
    \begin{align*}
        \Proj^D_B(S) &\equ  \bigcup_{f \in \Rel^D(S) \cap B} \Spec(S_{(f)}) 
        &\equ  \bigcup_{f \in S'_+} \Spec(S'_{(f)}) 
        &\equ \Proj^D(S').
    \end{align*}
\end{proof}

\begin{remark}\label{rem:subring_not_unique}
Note that there might be lots of subrings of $S$ satisfying this property. For example take $S = \BC[x, y,z]$ graded by $e_1$, $e_2$ and $e_1+e_2$, where $S_+ = (xy, xz, yz)$. Take $B = (xy, xz)$ (so that $\Proj^D_B(S) = \BP^1$). Then for $S$, $S' = \BC[xy, x, z]$ and for $S'' = \BC[xy,xz]$ we see that the degree zero localizations at $f = xy, xz$ coincide.
\end{remark}

As morphisms of multigraded rings induce a subring structure, the previous Lemma implies:

\begin{corollary}
    Let $\Phi\colon (R, R_+) \to (S, B)$ be an injective morphism of conical rings, where $B = \varphi(R_+)$. Then 
    \begin{align*}
        \Proj^{D_R}(R) \equ \Proj^{D_S}_B(S) .
    \end{align*}
    In particular, every morphism of conical rings $\Psi\colon (A, B_A) \to (S, B_S)$ induces a morphism $\Psi^\ast \colon \Proj^{D_R}(R) \to \Proj^{D_A}_{B_A}(A)$.
\end{corollary}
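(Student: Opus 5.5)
The plan is to reduce the first assertion to Lemma~\ref{lem:subring_localization}, applied not to $S$ itself but to the image $\varphi(R)\subseteq S$.

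\textbf{Step 1: identify $R$ with its image.} Since $\Phi$ is injective, $\varphi$ is an isomorphism of rings $R\to\varphi(R)$. Regrading $\varphi(R)$ by $D_R$ via $\alpha_\varphi$, this is an isomorphism of multigraded rings, so $\varphi(R_+)=\varphi(R)_+$. Hence $\Proj^{D_R}(R)\cong\Proj^{D_R}(\varphi(R))$, with $\Spec(R_{(g)})\cong\Spec(\varphi(R)_{(\varphi(g))})$ for every relevant $g$.

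\textbf{Step 2: compare the charts.} Because $\Phi$ is a morphism of conical rings, $B=\Rel_B(\varphi)$. Thus every relevant $f\in B$ is $f=\varphi(g)$ for some $g\in\Rel^{D_R}(R)$, and $g$ is relevant by Lemma~\ref{lem:rel_loc_im}. It remains to show
\[
\varphi(R)_{(f)}=S_{(f)} \quad\text{for all relevant } f\in B.
\]
I would obtain this from Lemma~\ref{lem:subring_localization}. Its proof builds a subring $S'\subseteq S$ generated over $S_0$ by all numerators $g$ with $g/f'$ of degree zero, where $f'\mid f\in B$. It then shows $S'_+=B$ and $S'_{(f)}=S_{(f)}$. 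The key claim is therefore that $\varphi(R)$ can play the role of $S'$, i.e.\ $\varphi(R)$ already contains these numerators.

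By the compatibility $\varphi(R_d)\subseteq S_{\alpha_\varphi(d)}$ and the argument in the proof of Proposition~\ref{thm:morph_mult_relevant}, the homogeneous divisors of $f=\varphi(g)$ are images of divisors of $g$. Degree-zero fractions in $S_f$ are then matched with those in $\varphi(R)_f$ through $\alpha_\varphi$.

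\textbf{Step 3: glue.} Combining Steps 1 and 2 gives
\[
\Proj^{D_S}_B(S)=\bigcup_{f\in\Rel^{D_S}_B(S)}\Spec(S_{(f)})=\bigcup_{g\in\Rel(R)}\Spec(R_{(g)})=\Proj^{D_R}(R).
\]
The gluing data agree because they come from the ring maps in both descriptions.

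\textbf{Step 4: the ``in particular''.} Given a morphism $\Psi\colon(A,B_A)\to(S,B_S)$, every relevant $f\in B_S$ equals $\psi(a)$ with $a\in\Rel_{B_A}(A)$. The local maps $A_{(a)}\to S_{(f)}$ induce morphisms $\Spec(S_{(f)})\to\Spec(A_{(a)})\subseteq\Proj^{D_A}_{B_A}(A)$, and these are compatible on overlaps $S_{(ff')}$. This gives a morphism $\Proj^{D_S}_{B_S}(S)\to\Proj^{D_A}_{B_A}(A)$. Precomposing with the identification of Step 3 (taking $R$ to be the subring with $R_+=B_S$ from Lemma~\ref{lem:subring_localization}) yields $\Psi^\ast$.

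\textbf{Main obstacle.} I expect the hardest part to be the equality $\varphi(R)_{(f)}=S_{(f)}$ in Step 2. A degree-zero fraction $s/f^k$ in $S$ need not have its numerator in $\varphi(R)$ a priori. Moreover, the $D_S$-degree-zero condition is weaker than the $D_R$-degree-zero condition when $\alpha_\varphi$ has a kernel.

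My first attempt is to use the defining property $B=\Rel_B(\varphi)$ to restrict the relevant charts. Then I would argue as in Lemma~\ref{lem:subring_localization} that only numerators dividing powers of elements of $B$ matter, and these lie in $\varphi(R)$ by the divisor argument. If that fails, I would read the corollary as asserting an isomorphism only after replacing $S$ by the subring $S'$ of Lemma~\ref{lem:subring_localization}.
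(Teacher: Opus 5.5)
Your reduction is the same as the paper's: its whole argument is that $\varphi$ realizes $R$ as a subring of $S$, so Lemma~\ref{lem:subring_localization} applies. However, your proposal leaves open the step that carries all the content, namely that $\varphi$ induces isomorphisms $R_{(g)}\to S_{(\varphi(g))}$. The paper treats this as immediate, so as written there is a genuine gap.

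\begin{itemize}
\item Lemma~\ref{lem:subring_localization} only asserts that \emph{some} subring $S'$ with $S'_{(f)}=S_{(f)}$ exists, and Remark~\ref{rem:subring_not_unique} stresses that it is not unique. So the lemma says nothing about the specific subring $\varphi(R)$.
\item Your substitute argument does not close this. The claim that homogeneous divisors of $\varphi(g)$ are images of divisors of $g$ is false for general morphisms: take $\BC[t]\to\BC[x]$, $t\mapsto x^2$, $\alpha_\varphi(1)=2$. Even where it holds, it only controls divisors of powers of $f$. An element $s/f^k\in S_{(f)}$ can have an arbitrary homogeneous numerator of degree $k\deg f$.
\item The $\ker\alpha_\varphi$ issue you raise is left unresolved. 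Your fallback of replacing $S$ by $S'$ proves a different statement.
\item Step~3 has a second, unflagged gap. To exhaust $\Proj^{D_R}(R)$ you need $\varphi(g)$ to be relevant in $S$ for \emph{every} relevant $g\in R$. Lemma~\ref{lem:rel_loc_im} gives only the converse. By Proposition~\ref{thm:morph_mult_relevant} this requires $\alpha_\varphi$ to be surjective, which you never check.
\end{itemize}

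The missing idea is to use two hypotheses your argument never touches: $B$ is an ideal of $S$, and every relevant element of $B$ is $\varphi$ of a relevant, hence homogeneous, element of $R$. Assume, as in the polynomial setting, that $S$ is a domain.

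Let $f=\varphi(g)\in\Rel^{D_S}_B(S)$ and let $s\neq 0$ be homogeneous. Then $fs\in B$, and $fs$ is relevant, since units of $S_f$ stay units in $S_{fs}$, giving $D^f\subseteq D^{fs}$. Hence $fs=\varphi(r)$ with $r$ homogeneous.

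Now take $\deg s=k\deg f$, so $s/f^k=\varphi(r)/\varphi(g)^{k+1}$. Suppose $\deg r\neq(k+1)\deg g$ in $D_R$. Then $s\neq -f^k$, and the relevant element $f(s+f^k)\in B$ would have the non-homogeneous element $r+g^{k+1}$ as its only preimage, by injectivity. That is a contradiction. So $r/g^{k+1}\in R_{(g)}$, and $R_{(g)}\to S_{(f)}$ is surjective. Injectivity follows from injectivity of $\varphi$, since $f^m\varphi(r)=0$ forces $g^mr=0$.

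For arbitrary homogeneous $s$, the same computation gives $\deg s=\alpha_\varphi(\deg r-\deg g)\in\im\alpha_\varphi$. Since $S$ is effectively graded, $\alpha_\varphi$ is surjective. A relevant $f\in B$ exists as soon as $R$ has a relevant $g$: multiply $\varphi(g)\in B\subseteq S_+$ by a relevant element of $S$. Proposition~\ref{thm:morph_mult_relevant} then places $\varphi(g)$ in $\Rel^{D_S}_B(S)$ for every relevant $g$.

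Applying the same argument to products $gg'$ handles the overlaps. With these local isomorphisms in hand, your gluing in Steps~3--4 goes through.
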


We also have to consider maps that give a setting like in Example~\ref{ex:maps_special}.

\begin{corollary}\label{cor:subring_loc}
    Let $\varphi\colon R\to S$ be a morphism of multigraded rings, such that there exists a subset $B_S \subseteq S_+$ for which holds $S_{(f)} = \varphi(R)_{(f)}$ for all relevant $f \in B_S$ and $\varphi^{-1}(B_S) = R_+$. Then $\Phi\colon (R, R_+) \to (S, B_S)$ is a morphism of conical rings that induces an isomorphism
    \begin{align*}
        \Proj^D_B(S) \ \cong \ \Proj^D(\varphi(R)).
    \end{align*}
\end{corollary}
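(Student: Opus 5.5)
The plan is to reduce Corollary~\ref{cor:subring_loc} to Lemma~\ref{lem:subring_localization} and the preceding corollary, applied to the image ring $S' \deq \varphi(R) \subseteq S$. There are two things to establish. First, that $\Phi$ is a morphism of conical rings. Second, that the open subscheme $\Proj^D_{B_S}(S)$ is glued from exactly the same affine charts as $\Proj^D(\varphi(R))$.

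\emph{Step 1: $\Phi$ is a morphism of conical rings.} By Definition~\ref{def:cat_con}~(4) we must show $B_S = \Rel_{B_S}(\varphi)$. Every element of $\Rel_{B_S}(\varphi)$ lies in $B_S$ by definition, so only one inclusion needs work. Take a relevant $f \in B_S$. Since $\varphi^{-1}(B_S) = R_+$, and $B_S$ is, like $S_+$, generated by elements coming from the image (which is what the hypothesis $S_{(f)} = \varphi(R)_{(f)}$ is meant to encode), we get a homogeneous $g \in R_+$ with $\varphi(g) = f$. Lemma~\ref{lem:rel_loc_im} then shows that $g$ is relevant in $R$, because $f$ is relevant in $S$. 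Hence $f \in \Rel_{B_S}(\varphi)$. Checking only on generators is allowed by the remark that the conditions need only be verified on the generic locus. The non-relevant elements of $B_S$ are handled implicitly: throughout, the paper treats conical-ring conditions on the relevant part.

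\emph{Step 2: the isomorphism.} Write
\[
\Proj^D_{B_S}(S) = \bigcup_{f \in \Rel^D_{B_S}(S)} \Spec(S_{(f)}).
\]
By hypothesis, $S_{(f)} = \varphi(R)_{(f)}$ for each such $f$. By Step 1, each such $f$ equals $\varphi(g)$ for a relevant $g$, and the same argument run inside $\varphi(R)$ shows that $f$ is relevant there. Conversely, every relevant element of $\varphi(R)$ is $\varphi(g)$ for some $g$ relevant in $R$ (Lemma~\ref{lem:rel_loc_im} for $R \to \varphi(R)$, which is relevant by the corollary on surjectivity). Such a $g$ lies in $R_+ = \varphi^{-1}(B_S)$, so its image lies in $B_S$. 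Hence the two index sets agree, and the charts coincide. The gluing data agree as well, because in both cases the overlaps are the localizations $S_{(ff')} = \varphi(R)_{(ff')}$. This identifies the two schemes.

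\emph{Main obstacle.} The delicate point is relevance. We must check that an $f$ relevant in $S$ and lying in $\varphi(R)$ is also relevant as an element of $\varphi(R)$ with its induced grading, and conversely. The grading groups may differ, as Example~\ref{ex:maps_special} shows. For this I would use the criterion via units. Since $S_{(f)} = \varphi(R)_{(f)}$, the degree-zero parts agree. The relevance of $f$ in $S$ gives periodicity of $S_f$, so its degree-zero part controls it up to finite index. From this I would deduce that the group of unit degrees of $\varphi(R)_f$ has finite index in the group generated by degrees of $\varphi(R)$, which is the degree group of $\varphi(R)$ since we assume effective grading. This is the step I expect to require the most care.
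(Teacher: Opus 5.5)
\textbf{Verdict: genuine gap.} Your overall route matches the paper's. The paper states the corollary without proof, as a consequence of Lemma~\ref{lem:subring_localization} with $S'=\varphi(R)$: the hypothesis $S_{(f)}=\varphi(R)_{(f)}$ identifies the charts, and the gluing along $S_{(ff')}$ is the same on both sides.

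The gap is in the step you flag as the main obstacle. You infer that $f\in\varphi(R)$, relevant in $S$ with $S_{(f)}=\varphi(R)_{(f)}$, must be relevant in $\varphi(R)$. This is false. Periodicity of $S_f$ is a property of $S$, and the degree-zero part does not determine which degrees carry units in $\varphi(R)_f$.

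Here is a counterexample. Take $S=\BC[x,y]$ with $\deg x=e_1$, $\deg y=e_2$, the effectively $\BZ^2$-graded subring $A=\BC[x,xy]$, and $f=xy$. Then $f$ is relevant in $S$ and $S_{(f)}=\BC=A_{(f)}$. But $A_f=\BC[x,f^{\pm1}]$ has homogeneous units only in degrees $\BZ(e_1+e_2)$, so $f$ is not relevant in $A$. This $A$ violates other hypotheses of the corollary, but your inference uses none of them.

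The same example, read as $\varphi\colon\BC[a,b]\to\BC[x,y]$ with $\deg a=e_1$, $\deg b=e_1+e_2$, $a\mapsto x$, $b\mapsto xy$, shows that relevance cannot be pulled back along a graded map: $\varphi(b)$ is relevant in $S$, but $b$ is not relevant in $\BC[a,b]$. So two of your moves fail in this generality:
\begin{itemize}
\item ``the same argument run inside $\varphi(R)$'', i.e.\ pulling relevance back along $\varphi(R)\hookrightarrow S$;
\item the appeal to Lemma~\ref{lem:rel_loc_im} in Step~1.
\end{itemize}

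What does transfer is relevance forward from $R$ to $\varphi(R)$. Suppose $f=\varphi(g)$ with $g$ relevant in $R$. Let $D'\subseteq D_S$ be the grading group of $\varphi(R)$, and let $D^f_{\varphi(R)}$ be the group of degrees of homogeneous units of $\varphi(R)_f$. Then $\alpha_\varphi(D^g)\subseteq D^f_{\varphi(R)}\subseteq D'\subseteq\alpha_\varphi(D_R)$. Since $[\alpha_\varphi(D_R):\alpha_\varphi(D^g)]\le[D_R:D^g]<\infty$, this gives $[D':D^f_{\varphi(R)}]<\infty$. This is essentially Proposition~\ref{thm:morph_mult_relevant} applied to $R\to\varphi(R)$.

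The proof therefore has to run through relevant preimages supplied by the hypotheses. Three points need fixing.
\begin{itemize}
\item \emph{Membership in the image.} That a relevant $f\in B_S$ lies in $\varphi(R)$ at all is encoded only in the fact that $\varphi(R)_{(f)}$ is written down. The condition $\varphi^{-1}(B_S)=R_+$ gives no surjectivity. The reduction to generators in item~(5) of the remark after Definition~\ref{def:rel_proj} needs $S$ factorially graded and noetherian, which is not assumed.
\item \emph{Relevance of the preimage.} Relevance of a preimage $g$ must be extracted from $g\in\varphi^{-1}(B_S)=R_+$. One way is to work with monomial generators of $R_+$, which are relevant.
\item \emph{Converse half of Step~2.} There you only show that $\varphi(g)$ lands in $B_S$. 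To lie in $\Rel^D_{B_S}(S)$ it must also be relevant in $S$, and a homogeneous element of $S_+$ need not be relevant. This again needs the monomial setting, or $[D_S:D']<\infty$.
\end{itemize}
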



Next, we define the $\Proj^D_B$ functor, which assigns a scheme to a conical ring, a morphism of schemes to a morphism of conical rings, and most importantly, an equivalence of rational maps of conical polynomial rings and morphisms of toric prevarieties (which we will deal with in Section~\ref{sec:toric_prev}).

\begin{proposition}\label{prop:Proj_B^D}
There is a functor into the category of schemes
\begin{align*}
    \bProj\colon \textbf{ConRing} \longrightarrow \textbf{Sch},\ (S, B) \mapsto \bigcup_{f\in\Rel^{D_S}_B(S)} \Spec(S_{(f)}),
\end{align*}
where $\Spec(S_{(f)})$ and $\Spec(S_{(g)})$ are glued along the open subscheme $\Spec(S_{(fg)})$ (c. f. \cite{May}, Proposition 1). If $S$ is factorially graded and noetherian, we might take the union over all $f\in\Gen^D_B(S)$. \\

A morphism $\Phi\colon (R, B') \to (S, B)$ of conical rings gives rise to a morphism of schemes
\begin{align*}
   \bProj(\Phi)\colon \bigcup_{f\in\Rel^{D_S}_{B}(S)} \Spec(S_{(f)}) \to \bigcup_{g\in\Rel^{D_R}_{B'}(R)} \Spec(S_{(g)}).
\end{align*}

A rationally relevant morphism $\Phi \colon (R, R_+) \to (S, B)$, $B \subsetneq S_+$, gives rise to a morphism of schemes
\begin{align*}
    \bProj^\ast(\Phi) \colon \Proj^{D_S}_{B_S}(S) = \bigcup_{f\in\varphi(\Rel^{D_R}(R))} \Spec(S_{(f)}) \to \bigcup_{g\in\Rel^{D_R}(R)} \Spec(S_{(g)}) = \Proj^{D_R}(R).
\end{align*}

A rational map $\Phi\colon (R, B_R) \dashrightarrow (S, B_S)$ of conical rings gives rise to a morphism of schemes
\begin{align*}
    \bProj_{\text{rat}}(\Phi) \colon \bigcup_{f\in\varphi(B_R)} \Spec(S_{(f)}) \to \bigcup_{g \in B_R} \Spec(S_{(g)}) .
\end{align*}
\end{proposition}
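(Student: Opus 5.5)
We would prove Proposition~\ref{prop:Proj_B^D} in four steps, one for each claim. The common tool is the standard description of morphisms into schemes obtained by gluing affine pieces: a morphism from a glued scheme is the same thing as a family of morphisms on the pieces that agree on pairwise overlaps.

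\emph{Step 1: objects.} For a conical ring $(S,B)$, each $\Spec(S_{(f)})$ with $f\in\Rel^{D_S}_B(S)$ is affine. Since $B$ is an ideal, $fg\in B$ for any two such $f,g$. Moreover $fg$ is again relevant, because $D^{f}\subseteq D^{fg}$ (the units of $S_f$ remain units in $S_{fg}$). The localization $S_{(f)}\to S_{(fg)}$ identifies $\Spec(S_{(fg)})$ with the principal open subset $D(g^{\deg f}/f^{\deg g})$, or more precisely with a suitable degree-zero fraction that exists because $D^f$ has finite index. This gives the open embeddings. The cocycle condition holds on triple overlaps $\Spec(S_{(fgh)})$, as in \cite{May}, Proposition~1.

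The result is exactly the open subscheme $\Proj^D_B(S)$ of Definition~\ref{def:rel_proj}. In the factorially graded noetherian case, every relevant $f\in B$ is divisible in the required sense by some element of $\Gen^D_B(S)$ with the same localization, equivalently $\Spec(S_{(f)})\subseteq\Spec(S_{(f')})$ for a generator $f'$. So taking the union over $\Gen^D_B(S)$ suffices.

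\emph{Step 2: morphisms of conical rings.} Let $\Phi\colon(R,B')\to(S,B)$ be a morphism of conical rings. For each $f\in\Rel^{D_S}_B(S)$, the defining condition $B=\Rel_B(\varphi)$ gives a relevant $g\in B'$ with $\varphi(g)=f$. Since $\varphi(R_d)\subseteq S_{\alpha_\varphi(d)}$, the map $\varphi$ induces $R_g\to S_f$ and hence, on degree-zero parts, $R_{(g)}\to S_{(f)}$. Taking $\Spec$ gives $\Spec(S_{(f)})\to\Spec(R_{(g)})$.

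There are two things to check.
\begin{itemize}
\item \emph{Independence of the choice of $g$.} If $\varphi(g)=\varphi(g_1)=f$, both local maps factor through $R_{(gg_1)}\to S_{(f^2)}=S_{(f)}$, so they agree.
\item \emph{Compatibility on overlaps.} On $\Spec(S_{(ff_1)})$ the two maps both factor through $R_{(gg_1)}\to S_{(ff_1)}$.
\end{itemize}
The local morphisms therefore glue. Functoriality follows because the local maps are induced by ring maps, and composites of such are again induced, as in the proof of Definition and Proposition~\ref{def:cat_con}. Identities go to identities.

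\emph{Step 3: the remaining two assertions.} The rationally relevant case reduces to Step~2. By Definition and Proposition~\ref{def:cat_con}(6), $\Phi$ factors as a morphism of conical rings $(R,R_+)\to(\varphi(R),\varphi(R_+))$. Corollary~\ref{cor:subring_loc} (resp.\ Lemma~\ref{lem:subring_localization}) identifies $\Proj^{D_S}_{B}(S)$, with $B$ generated by $\varphi(\Rel^{D_R}(R))$, with $\Proj(\varphi(R))$ via the equalities $S_{(f)}=\varphi(R)_{(f)}$. Composing gives $\bProj^\ast(\Phi)$.

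For a rational map, the data (ii) already provide compatible local morphisms $\phi_g\colon R_{(g)}\to S_{(f_g)}$, where $f_g$ is chosen as in Remark~\ref{rem:choice_fg}. Since $S_{(f_g)}=S_{(\varphi(g))}$, each gives $\Spec(S_{(\varphi(g))})\to\Spec(R_{(g)})$. These glue by the compatibility on overlaps built into the definition.

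\emph{Main obstacle.} The delicate point is the overlap compatibility in Steps~2 and~3, specifically identifying the intersection of $\Spec(S_{(f)})$ and $\Spec(S_{(f')})$ with $\Spec(S_{(ff')})$ inside $\Proj^D_B(S)$. We would handle this first, using that relevance is preserved under products and the finite-index property of $D^f$ to produce degree-zero fractions. Everything else is formal gluing.
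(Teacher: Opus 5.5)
Your overall strategy is the paper's. Its proof is the single sentence that in each case the morphism is obtained by gluing the compatible local morphisms induced by $R_{(g)}\to S_{(\varphi(g))}$, and your Steps~1 and~2 and the rational-map part of Step~3 essentially expand that.

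\textbf{The gap is the rationally relevant case.} There you do not use these local maps. Instead you route through $\Proj(\varphi(R))$ and assert $S_{(f)}=\varphi(R)_{(f)}$ for $f\in\varphi(\Rel^{D_R}(R))$. Rational relevance does not give this equality. In Corollary~\ref{cor:subring_loc} it is a \emph{hypothesis}, and Lemma~\ref{lem:subring_localization} only produces \emph{some} subring $S'$ with $S'_{(f)}=S_{(f)}$, which in general is not $\varphi(R)$. For example, take the inclusion $\varphi\colon R=\BC[x]\hookrightarrow S=\BC[x,z]$, both standard $\BZ$-graded. It is rationally relevant, and $B=(x)\subsetneq S_+=(x,z)$. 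But $\varphi(R)_{(x)}=\BC$ while $S_{(x)}=\BC[z/x]$. So $\Proj^{\BZ}_B(S)\cong\BA^1$ cannot be identified with $\Proj^{\BZ}(\varphi(R))=\Spec\BC$, even though the desired morphism $\BA^1\to\Spec\BC$ exists.

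\textbf{The fix is to drop the detour.} For $g\in\Rel^{D_R}(R)$ and $f=\varphi(g)$, the ring map $R_{(g)}\to S_{(f)}$, $a/g^k\mapsto\varphi(a)/f^k$, exists for any morphism of multigraded rings. The index set $\varphi(\Rel^{D_R}(R))$ is closed under products. Your two checks from Step~2 then carry over verbatim with $B'=R_+$: independence of the choice of $g$ via $R_{(gg_1)}$, and compatibility on overlaps via $\varphi(gg_1)=ff_1$. This is exactly the paper's argument. Alternatively, keep your factorization through $\Proj(\varphi(R))$, but use the natural morphism induced by the inclusions $\varphi(R)_{(f)}\subseteq S_{(f)}$ rather than an identification.

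\textbf{A smaller slip in Step~1.} Reducing the union to $\Gen^D_B(S)$ by divisibility only works for monomials. A relevant element such as $x+z\in S_+$ in the example above is divisible by no generator. The reduction still holds, because $f$ lies in the ideal generated by $\Gen^D_B(S)$, so $\Spec(S_{(f)})$ is covered by the $\Spec(S_{(f')})$ with $f'\in\Gen^D_B(S)$.
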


\begin{proof}
In each case, the morphism is obtained by gluing the compatible local morphisms induced by $R_{(g)} \to S_{(\varphi(g))}$. 
\end{proof}

With the theory developed so far, we can now give morphisms on (common) open subsets.

\begin{example}\label{ex:standard_ex_maps_between}
This example builds on computations done in our previous papers, which can also be found in the author's thesis \footnote{\url{https://sites.google.com/view/felixgoebler/home/research}}. 
 For $S = \BC[x,y,z,w]$ with degrees $e_1, e_1, e_1+e_2$ and $e_2$, we saw that the subset $B = (xz,yz, xw, yw) \subseteq S_+$ gives the blowup of $\BP^2$ in a point, i.e.\ $\Bl_p(\BP^2) = \Proj^D_B(S)$ (see \cite{paper2}, Example 2.4).
    We want to use Lemma~\ref{lem:subring_localization} to compute a subring $S' \subseteq S$ such that $\Bl_p(\BP^2) = \Proj^D(S')$. We know that $\Bl_p(\BP^2) \subseteq \BP^2\times\BP^1 = \Proj^{\BZ^2}(R)$ for $R = \BC[a, b, c, d, e]$, where $\deg(a) = \deg(b) = (1,0)$ and $\deg(c), \deg(d), \deg(e) = (1, 1)$. Thus, we define the map
\begin{align*}
    \varphi \colon R \to S, \ a \mapsto x, b \mapsto y, c \mapsto z, d \mapsto xw, e \mapsto yw ,
\end{align*}
which is clearly a morphism of multigraded rings, giving us $S' = \varphi(R) = \BC[x, y, z, xw, yw] \subsetneq S$. Then the relevant elements of $S'$ are generated by $(xz, yz, xw, yw)$ and thus $\Proj^D(S') = \Proj^D_B(S) = \Bl_p(\BP^2)$ by Corollary~\ref{cor:subring_loc}. 

In particular, $S'$ can be seen as a refinement of the Cox ring of $\Bl_p(\BP_\BC^2)$ and therefore as the actual coordinate ring. 
Note that we might expect $\Proj^D(S')$ to be given by a $\BN$-graded Proj construction, or at least by a product of weighted projective spaces, as it is separated. By the above construction, we see that
\begin{align*}
    \Bl_p(\BP^2) \equ \Proj^{\BN\times\BN}(\BC[a,b,c,d,e]/(ae-bd)).
\end{align*}
\end{example}

%% file: Pre_Toric_Proj.tex
The classical $\BN$-graded Proj construction is a fundamental tool to gain a better understanding of projective spaces. Even though Brenner--Schröer generalized this construction in 2000, there are very few subsequent papers (notably \cite{KU, KSU, May}) on the multigraded Proj construction.  
We make precise a reverse Cox correspondence: the multigraded polynomial ring $S$ together with an irrelevant subset $B$ encodes the same data as a simplicial system of fans \(\mathcal{S}\).
We will analyze how noetherian multigraded polynomial rings $S$ give rise to simplicial systems of fans $\CS$, and in turn, how simplicial toric prevarieties give rise to multigraded noetherian polynomial rings. 

While the latter is essentially (a refinement of) the Cox construction (cf.\ \cite{Cox}, Section 1), the first one can be seen as some kind of reverse construction. We want to emphasize that Lemma~\ref{lem:subring_localization} allows us to identify every Cox ring $S$ with given irrelevant ideal $B \subseteq S_+$, with a subring $S'$ such that $(S')_+ = B$.
In particular, $S'$ can be seen as a refinement of the Cox ring $S$ (and therefore may be seen as the actual coordinate ring) and the abovementioned assignments are inverse to each other in this case.

Alternatively, we show that we can identify subsets of the full irrelevant ideal $S_+$ with a subsystem of fans of $\CS$. This implies that a simplicial toric prevariety $X$ is fully determined by a triple $(D, S, B)$, where $(D, S)$ is a multigraded polynomial ring and $B\subseteq S_+$ is a ideal in $S$ (see Corollary~\ref{cor:toric_var_via_ring}).

For this whole chapter, $S = \BC[T_1,\ldots, T_n]$ will always denote a multigraded polynomial ring that is graded by a finitely generated abelian group $D$, where $n > r = \rank(D)$, and $X$ will be a normal toric variety, mostly assumed to be simplicial.

\subsection{Maximal Cones}\label{sec:max_cones}
We already know that relevant $f\in S$ give rise to maximal dimensional cones $\CC_D(f)$ in the vector space $D_\BR$.
In particular, we showed that intersections of those weight cones define a chamber structure on $\sigma(S)$ in \cite{paper2}, Corollary 2.12.
In this section, we will define another maximal cone $\sigma_f$ that is associated with relevant $f$ for polynomial rings and lies in a lattice associated with the grading:  
Let $\gamma \colon \BZ^n \to D,  e_i \mapsto \deg(T_i)$ be $\BZ$-linear and $M_S := \ker(\gamma)$. Then we take $N_S$ to be the dual, i.e.\ $N_S := \Hom_\BZ(M_S, \BZ)$. If there is no ambiguity, we might drop the ring in the index.
The following construction is from \cite{BS} (cf.\ Remark 3.7).
We will spend a large part of this chapter elaborating on this construction and drawing conclusions from it.

\begin{definition}
    Let $I = \{1, \ldots, n\}$. For each subset $J \subseteq I$, let $\sigma_J\subseteq N_\BR$ be the convex cone generated by the projections $\pr_i|_M\colon \BZ^n \to \BZ \in \Hom_\BZ(M, \BZ)$ restricted to $M$, $i \in J$.
\end{definition}

Now by \cite{paper1}, Lemma 1.20 (also see \cite{CRB}, Proposition 2.1.3.4), every element $f \in \Gen^D(S)$ has a unique factorization
\begin{align*}
  f \equ T_1^{\epsilon_1} \cdot \ldots \cdot T_n^{\epsilon_n} ,
\end{align*}
where $\epsilon_i \in \{0, 1\}$ and $\epsilon_i \neq 0$ for exactly $r$ indices, or all $\epsilon_i = 0$ if $1$ is relevant.
Thus there is a well-defined support $J_f :=\{i \in I\mid \epsilon_i \neq 0\}$. Note that by \cite{BS}, Proposition 3.4 it follows that the affine piece $\Spec(S_{(f)})$ is given by the simplicial toric variety $\Spec(\BC[M_{J_f}])$, where 
\begin{align*}
    M_{J_f} \deq (\BZ^J \oplus \BN^{I-J}) \cap M \ \subseteq \ \BZ^n .
\end{align*}

In particular, we have
\begin{align}\label{eq:cone_def}
    M_{J_f} \equ \sigma_{I-J_f}^\vee \cap M,
\end{align}
and we denote $\sigma_f := \sigma_{I-J_f}$.

\begin{remark}\label{rem:cone_compute}
    Let $\{m_1, \ldots, m_k\}$ be a basis of $M$ (in other words, the $m_i$ form a Hilbert basis of the toric variety with maximal-dimensional cones $\sigma_f$). A different choice of a basis will just permute the cones. Then
    \begin{align*}
        \sigma_{I-J_f} \equ \Cone\left( (\pr_i(m_1), \ldots, \pr_i(m_k)) \mid i \in I-J_f\right) \ .
    \end{align*}
\end{remark}

\begin{remark}\label{rem:cones_connect}
    The cones $\sigma_f$ and $\CC_D(f)$ for $f \in \Gen^D(S)$ are related as follows. Let $m_1, \ldots, m_s$ be a choice of minimal generators of $M$ and define $\CD$ to be the matrix given by the degrees of $S$, i.e.\ each column of $A$ corresponds to a degree of a variable of $S$. Let $\CB$ be the matrix with columns $m_i$. Then $\CD \CB = 0$. This is a special case of \emph{Gale duality} (cf.\ \cite{CRB}, Remark 2.2.1.4, see also \cite{CLS}, §14.3). 
\end{remark}

\begin{example}\label{ex:max_cones}
We want to compute the maximal cones for some examples we already discussed (here or in our previous papers).
    \begin{enumerate}[label=(\arabic*)]        
        \item Let $S = \BC[x, y, z, w]$ and $D = \BZ^2$, where $\deg(x) = \deg(y) = (1, 0)$, $\deg(z) = (1, 1)$ and $\deg(w) = (0, 1)$. Choosing generators $M = [(1, 0, -1, 1), (0, 1, -1, 1)]$, we conclude that the maximal-dimensional cones are given by
\begin{align*}
    \sigma_{xw} &\equ \Cone(e_2, -e_1-e_2), \\
    \sigma_{yw} &\equ \Cone(e_1, -e_1-e_2), \\
    \sigma_{zw} &\equ \Cone(e_1, e_2), \\
    \sigma_{xz} &\equ \Cone(e_2, e_1+e_2), \\
    \sigma_{yz} &\equ \Cone(e_1, e_1+e_2).
\end{align*}
    Note that $\sigma_{zw} = \sigma_{xz}\cup\sigma_{yz}$ and that all cones have a common face with $\sigma_{wz}$.

    \item Let $S = \BC[x, y, z]$ and $D= \BZ\times \BZ/2\BZ$, where $\deg(x) = (1, 0)$, $\deg(y) = (0, 1)$ and $\deg(z) = (1, 1)$. Compared to (2), we have much more equations in $D$, namely
    \begin{itemize}
        \item $\deg(xy) \equ \deg(z)$ (this is the only equation in (2)),
        \item $\deg(x^2) \equ \deg(z^2)$,
        \item $\deg(y^2) \equ 0$ (this equation follows from the first and second equations),
        \item $\deg(x) \equ \deg(yz)$ (follows from multiplying the first equation with $\deg(y)$ and using $\deg(y^2) = 0$).
    \end{itemize}
    Thus we compute $M_S = [(1, 1, -1), (2, 0, -2)]$ and the cones are
    \begin{align*}
        \sigma_x \equ \Cone\left( \begin{pmatrix}
            1\\0
        \end{pmatrix}, \begin{pmatrix}
            -1\\-2
        \end{pmatrix}\right) \text{ and } \sigma_z \equ \Cone\left( \begin{pmatrix}
            1\\2
        \end{pmatrix}, \begin{pmatrix}
            1\\0
        \end{pmatrix}\right).
    \end{align*}
    \end{enumerate}
\end{example}

\begin{remark}
\begin{enumerate}[label=(\alph*)]
    \item  Note that $M_S = M_R$ in (1) for $R = \BC[x, y, z]$, where $\deg(x) = (2, 1)$, $\deg(y) = (1, 2)$ and $\deg(z) = (3, 3)$. Therefore, we expect that there are infinitely many gradings giving lattices isomorphic to $M, N$. Also note that $(R, R_+)$ and $(S, S_+)$ are isomorphic as conical rings.
    In particular, $\Proj^D(S)$ does not depend on the exact numbers $\deg(T_i)$ but on the linear dependency type (see \cite{paper3}, Theorem 3.10), encoded by $\ker(\gamma)$.

    \item By \cite{paper1}, Example 1.28 we also know that $M = [(1, 2, -2)]$ corresponds to $S'$ and $\Proj^D(S')= \Proj^D(S)$, so $\Proj^D(S)$ does also not depend on the exact generators of $\ker(\gamma)$. Instead, we only need to know which entries are positive, zero, or negative.

    \item Also note that $\varphi$ from Example~\ref{ex:maps_special} induces a map $G\colon M_R \to M_S$ of lattices, mapping generators of $M_R$ to generators of $M_S$. Hence, there is an induced map $F \colon N_R \to N_S$ of lattices. We will look closer at this connection between graded ring morphisms and lattice morphisms in the next section.
\end{enumerate}   
\end{remark}

\begin{example}\label{ex:Z/2Z_fan}
    We want to compute the maximal-dimensional cones in a situation where the grading group is finite. Let the grading be given by the map $\gamma\colon \BZ \to \BZ/2\BZ$, $1 \mapsto \overline{1}$ (cf.\ \cite{paper3}, Example 1.5), so $M_S = 2\BZ$. Since $1$ is relevant and $J_1 = \emptyset$, we have $\sigma_1 = 2 \BN$. 
    Thus $M_{J_1} = 2\BZ \cap 2 \BN = 2\BN$. 
    In particular, $\Proj^{\BZ/2\BZ}(S) = \Spec(M_{J_1}) = \Spec(\BR[x^2]) = \BA^1_{S_0}$ is affine and hence separated.
\end{example}

\begin{lemma}
    Let $f \in \Gen^D(S)$ be relevant. Then $\sigma_f \subseteq (N_S)_\BR$ is of maximal dimension.
\end{lemma}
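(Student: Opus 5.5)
We have to show that for $f \in \Gen^D(S)$ relevant, the cone $\sigma_f=\sigma_{I-J_f}$, generated by the restrictions $\pr_i|_M$ for $i\notin J_f$, spans $(N_S)_\BR=\Hom(M,\BR)$. Since $N_\BR$ is the dual of $M_\BR$, a cone generated by linear forms on $M_\BR$ spans $N_\BR$ exactly when the only $m\in M_\BR$ on which all these forms vanish is $m=0$. So it suffices to prove that the map
\begin{align*}
  M_\BR \longrightarrow \BR^{I-J_f},\quad m\mapsto (\pr_i(m))_{i\in I-J_f}
\end{align*}
is injective.

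Suppose $m\in M_\BR=\ker(\gamma_\BR)$ satisfies $\pr_i(m)=0$ for all $i\notin J_f$. Then $m$ is supported on $J_f$, so $\sum_{j\in J_f} m_j \deg(T_j)=0$ in $D_\BR$. By the factorization recalled above (\cite{paper1}, Lemma 1.20), $f=\prod_{j\in J_f}T_j$ with $|J_f|=r=\rank(D)$. Relevance of $f$ means that $\CC_D(f)$ has nonempty interior. Its generators are the $\deg(T_j)$, $j\in J_f$, since the homogeneous divisors of powers of $f$ are monomials in these variables. Hence these $r$ vectors span $D_\BR$ and are therefore linearly independent. Consequently $m_j=0$ for all $j\in J_f$, so $m=0$.

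For the degenerate case where $1$ is relevant, we have $J_f=\emptyset$, $\rank D=0$ and $\gamma_\BR=0$. Then $M_\BR=\BR^n$ and all projections $\pr_i$ are used, which clearly gives injectivity.

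The only delicate point is the translation "relevant $\Rightarrow$ the $r$ degrees $\deg(T_j)$, $j\in J_f$, form a basis of $D_\BR$". Torsion in $D$ disappears after tensoring with $\BR$, so this follows directly from the interior characterization of relevance together with the count $|J_f|=r$.

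Finally, dualizing: the orthogonal complement of $\operatorname{span}(\sigma_f)$ in $M_\BR$ is $0$, so $\sigma_f$ has dimension $\dim N_\BR=n-r$.
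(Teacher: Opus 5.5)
Your proof is correct, but it takes a different route from the paper's. The paper argues by counting only. By Remark~\ref{rem:cones_connect}, $\sigma_f$ has $|I\setminus J_f|$ generators, this number equals $n-r=\rank(M_S)$, and the paper concludes from this that $\sigma_f$ is maximal.

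Taken literally, that count does not give full-dimensionality, because $n-r$ vectors in an $(n-r)$-dimensional space may be linearly dependent. In Example~\ref{ex:max_cones}~(1), the non-relevant monomial $xy$ also has $|J|=2=r$. Yet in the coordinates given by the chosen basis of $M$, $\pr_z|_M=(-1,-1)$ and $\pr_w|_M=(1,1)$ span only a line. So relevance has to be used for more than the equality $|J_f|=r$.

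Your argument shows exactly where it is used. You make the Gale duality behind Remark~\ref{rem:cones_connect} explicit by proving that $M_\BR\to\BR^{I-J_f}$ is injective. The reason is that $\{\deg(T_j)\}_{j\in J_f}$ is a basis of $D_\BR$, since $\lint(\CC_D(f))\neq\emptyset$.

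The paper's version is shorter but leaves this linear-independence step implicit; yours supplies it. You also get a by-product: you have $n-r$ vectors spanning an $(n-r)$-dimensional space, so the forms $\pr_i|_M$ with $i\notin J_f$ form a basis of $(N_S)_\BR$. Hence $\sigma_f$ is simplicial and strictly convex, which the proof of Theorem~\ref{thm:ring_to_prev} takes for granted.
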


\begin{proof}
    By Remark~\ref{rem:cones_connect}, $\sigma_f$ is generated by $|I\setminus J_f|$ elements. In particular, since all $f \in \Gen^D(S)$ have the same length, $|I\setminus J_f|$ is constant. Hence, all $\sigma_f$ have the same dimension. Now by construction, $|I\setminus J_f| = n-r = \rank(M_S)$, and thus $\sigma_f$ is maximal.
\end{proof}

\begin{remark}\label{rem:cox_ideal}
Let the maximal cones of $S$ form a simplicial fan $\Delta$ such that $\Delta(1)$ spans $(N_S)_\BR$. Then $\sigma_f$, where $f$ is a generator of $S_+$, coincides with the cone giving the monomial $x^{\widetilde{\sigma_f}}$ in \cite{Cox}.
    In particular, the Cox construction is just the \cite{BS} construction but with a (possibly) different irrelevant ideal. 
\end{remark}

The cones $\CC_D(f)$ and $\sigma(f)$ are connected as follows. 

\begin{lemma}
    Let $f \in S$ be homogeneous. Then $\lint(\CC_D(f) ) \not= \emptyset$ if and only if $\lint(\sigma_f) \not= \emptyset$.
\end{lemma}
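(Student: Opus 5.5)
We reduce both conditions to statements about the support $J_f\subseteq I$ and then compare them through the Gale duality of Remark~\ref{rem:cones_connect}. First, since $S=\BC[T_1,\ldots,T_n]$ is a polynomial ring over a field, every homogeneous divisor of a power $f^k$ of a monomial $f$ is (up to a scalar) a monomial in the variables $T_i$, $i\in J_f$. For a general homogeneous $f$ we pass to its $D$-prime factorization (\cite{paper1}, Lemma 1.20) and replace $f$ by the product of the variables occurring in it. Hence
\begin{align*}
\CC_D(f) \equ \Cone(\deg(T_i)\mid i\in J_f),
\end{align*}
and $\lint(\CC_D(f))\neq\emptyset$ if and only if the vectors $\deg(T_i)$, $i\in J_f$, span $D_\BR$. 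Likewise, by definition $\sigma_f=\sigma_{I-J_f}$ is generated by $\pr_i|_M$, $i\in I\setminus J_f$. So $\lint(\sigma_f)\neq\emptyset$ if and only if these functionals span $(N_S)_\BR=\Hom(M_\BR,\BR)$.

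Second, we translate the spanning condition for $\sigma_f$ into linear algebra on $M_\BR=\ker(\gamma_\BR)$. The functionals $\pr_i|_M$, $i\in I\setminus J_f$, span $N_\BR$ exactly when the restricted projection $M_\BR\to\BR^{I\setminus J_f}$ is injective. This holds if and only if $M_\BR\cap\BR^{J_f}=0$, that is, if and only if $\gamma_\BR|_{\BR^{J_f}}$ is injective. In other words, $\lint(\sigma_f)\neq\emptyset$ is equivalent to the vectors $\deg(T_i)$, $i\in J_f$, being linearly independent in $D_\BR$. This is the content of the relation $\CD\CB=0$ in Remark~\ref{rem:cones_connect}.

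Third, we compare independence with spanning. For $f\in\Gen^D(S)$ we have $|J_f|=r=\rank(D)$ by the unique factorization recalled before Remark~\ref{rem:cone_compute}. For $r$ vectors in an $r$-dimensional space, spanning and independence are equivalent, which gives the lemma in this case. This is also the dimension count used in the preceding lemma, where $|I\setminus J_f|=n-r=\rank(M_S)$.

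The main obstacle is the passage from generators to arbitrary homogeneous $f$. Spanning and independence are genuinely different once $|J_f|\neq r$. So the plan is to reduce to generators. If $\lint(\CC_D(f))\neq\emptyset$, then $f$ is relevant, hence divisible by some $g\in\Gen^D(S)$ with $J_g\subseteq J_f$ and $\deg(T_i)$, $i\in J_g$, a basis; conversely, we would choose an $r$-element independent subset of $J_f$ when one spans. We must then check that $\sigma_f$ is to be read as the cone $\sigma_g$ attached to such a generator, i.e.\ $\Spec(S_{(f)})=\Spec(S_{(g)})$, as in the definition of rational maps. If that identification fails, the statement should be restricted to $f\in\Gen^D(S)$, and I would state this explicitly rather than force the general case.
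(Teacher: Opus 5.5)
Your core argument is correct, and it takes a different and tighter route than the paper's.

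The paper argues by counting generators. A full-dimensional $\CC_D(f)$ ``has $r$ generators'', so $\sigma_f$ has $n-r$ generators and hence dimension $n-r$. Conversely, a full-dimensional $\sigma_f$ is said to force $|J_f|=r$ and hence a full-dimensional $\CC_D(f)$. This count tacitly assumes $f\in\Gen^D(S)$, which is the only case where the paper defines $J_f$ and $\sigma_f$. It also skips exactly what your Gale-duality step supplies. First, $n-r$ generators span an $(n-r)$-dimensional cone only because $M_\BR\cap\BR^{J_f}=0$. Second, $\dim\sigma_f=n-r$ by itself only gives $|J_f|\le r$.

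Your criterion is sharper: $\lint(\CC_D(f))\neq\emptyset$ iff $\{\deg T_i\}_{i\in J_f}$ spans $D_\BR$, while $\lint(\sigma_f)\neq\emptyset$ iff these degrees are linearly independent. It proves the equivalence rigorously for every monomial with $|J_f|=r$. In particular it covers all $f\in\Gen^D(S)$, where both sides hold anyway by the preceding lemma.

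You should, however, resolve your last paragraph negatively rather than leave the reduction open. Outside the case $|J_f|=r$, the statement (with $\sigma_f=\sigma_{I-J_f}$) is false, so no identification of $\sigma_f$ with some $\sigma_g$ can rescue it.
\begin{itemize}
\item Take $f=T_1\cdots T_n$. The degrees of all variables span $D_\BR$ because the grading is effective, so $\lint(\CC_D(f))\neq\emptyset$. Yet $\sigma_f=\sigma_\emptyset=\{0\}$ in $(N_S)_\BR$, which has dimension $n-r>0$. Accordingly $S_{(f)}=\BC[M_S]$ is the coordinate ring of the torus, not $S_{(g)}$ for a generator $g\mid f$; think of $f=xyz$ for $\BP^2$.
\item Conversely, for $r\ge 1$ the element $f=1$ has $\CC_D(1)=\{0\}$. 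But $\sigma_1=\sigma_I$ is generated by functionals with zero common kernel on $M_\BR$, so it has nonempty interior.
\end{itemize}
So the lemma should be stated for monomials with $|J_f|=r$, which is what the paper's proof effectively handles.

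Finally, drop the sentence replacing a non-monomial $f$ by the product of the variables occurring in it. The weight cone is spanned by the degrees of the $D$-prime factors, and these need not be variables. For instance, take $\deg x=(1,0)$, $\deg y=(0,1)$, $\deg z=(1,1)$. Then $f=z+xy$ is $D$-prime and $\CC_D(f)$ is the ray through $(1,1)$, whereas the degrees of $x,y,z$ span a full-dimensional cone. In any case, $\sigma_f$ is not defined for non-monomials at all.
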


\begin{proof}
    Maximality of $\CC_D(f)$ should induce maximality of $\sigma_f$ and vice versa. Let $\lint(\CC_D(f)) \neq \emptyset$, i.e.\ $\CC_D(f)$ is a maximal cone. Thus it has $r$ generators $d_{i_1}, \ldots, d_{i_r} \in \{d_1, \ldots, d_n\}$. In particular, $\sigma_f = \sigma_{I \setminus J_f}$ for $J = \{i_1, \ldots, i_r\}$, that is, $\dim(\sigma_f) = n-r = \rank(M_S)$ is maximal.
    Conversely, let $\lint(\sigma_f) \neq \emptyset$. Then $\sigma_f$ is a maximal cone of dimension $n-r$. In particular, $|J_f| = r$ and $\CC_D(f)$ is maximal.
\end{proof}

In particular, for every collection of relevant $(f_i)_{i\in I}$ such that the union of the $\sigma(f_i)$ forms a fan, the corresponding \emph{GIT cone}, i.e.\ the intersection of all $\CC_D(f_i)$ (cf.\ \cite{paper2}, Definition 2.3 (2)) is maximal-dimensional by the following result:

\begin{proposition}\label{prop:int_CC=sigma}
    Let $f, g \in \Gen^D(S)$, $f \neq g$. Then $\lint(\CC_D(f) \cap \CC_D(g)) \neq \emptyset$ if and only if $\lint(\sigma_f \cap \sigma_g) = \emptyset$.
\end{proposition}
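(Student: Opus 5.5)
This is Gale duality: two simplicial cones in $D_\BR$ spanned by complementary index sets overlap in their interiors exactly when the corresponding dual cones in $(N_S)_\BR$ meet only in lower dimension. The plan is to reduce both conditions to statements about the index sets $J_f, J_g \subseteq I$ and a linear relation among the degrees.

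First I fix notation. Write $d_i = \deg(T_i)$ and let $v_i = \pr_i|_M \in N_S$, so that $\sigma_f = \Cone(v_i \mid i \in I\setminus J_f)$. For $f \in \Gen^D(S)$ the support $J_f$ has exactly $r$ elements and $\{d_i\}_{i\in J_f}$ is a basis of $D_\BR$, so $\CC_D(f) = \Cone(d_i \mid i \in J_f)$ is simplicial of full dimension. By the preceding lemma, $\sigma_f$ is simplicial of dimension $n-r$, so $\{v_i\}_{i \in I\setminus J_f}$ is a basis of $(N_S)_\BR$. The relations available are the Gale ones: $\sum_i \lambda_i v_i = 0$ in $(N_S)_\BR$ exactly when $\lambda$ lies in the row space of the degree matrix $\CD$, that is, $\lambda_i = \langle u, d_i\rangle$ for some $u \in D_\BR^\ast$ (Remark~\ref{rem:cones_connect}). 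Dually, $\sum_i \mu_i d_i = 0$ exactly when $\mu \in M_\BR$.

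Next I translate both conditions into statements about the existence of a vector.

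Overlap of weight cones. $\lint(\CC_D(f)\cap\CC_D(g)) \neq \emptyset$ holds iff there is a point $w$ with strictly positive coordinates in both simplicial bases. This gives $\mu = a - b \in M_\BR$ with $a>0$ on $J_f$ and $b>0$ on $J_g$. For the converse I intersect two open full-dimensional simplicial cones and perturb. If I only have a common point, not one in the interior of both, I handle it by a small perturbation argument.

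Separation of the $\sigma$'s. By the separation theorem for two simplicial full-dimensional cones, $\lint(\sigma_f\cap\sigma_g) = \emptyset$ iff there is a nonzero linear form on $(N_S)_\BR$ that is $\ge 0$ on $\sigma_f$ and $\le 0$ on $\sigma_g$. Such a form is an element $m \in M_\BR$, and pairing $m$ with $v_i$ gives the coordinate $m_i$. So the condition becomes the existence of a nonzero $m \in M_\BR$ with $m_i \ge 0$ for $i \notin J_f$ and $m_i \le 0$ for $i \notin J_g$.

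Finally I match the two vectors. Given $\mu = a - b$ as in the first condition, $\mu$ is nonzero since $f \neq g$ forces $J_f \neq J_g$. Its coordinates satisfy $\mu_i \le 0$ off $J_f$ (there only $-b$ contributes) and $\mu_i \ge 0$ off $J_g$. Hence $m = -\mu$ separates $\sigma_f$ and $\sigma_g$, and the $\Rightarrow$ direction follows.

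For $\Leftarrow$ I reverse this: from a separating $m$, decompose $-m$ into positive parts supported on $J_f$ and on $J_g$. This is the main obstacle. The separating $m$ only gives weak inequalities and may be supported on $J_f \cap J_g$ in a degenerate way, whereas I need strict positivity to land in the interiors. I would first use that both cones are full-dimensional and simplicial. This lets me choose $m$ strictly separating the relative interiors of a common facet-complement, adjusting $m$ by elements that vanish on the common face spanned by $\{v_i\}_{i \notin J_f\cup J_g}$. That produces $a, b$ strictly positive after adding a small common interior vector $\epsilon\sum_{i\in J_f\cap J_g} e_i$-type correction to both sides.

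If this degenerate case resists, I would fall back to a Farkas-lemma alternative directly on the index sets. That would show the two systems, strict positivity in $D_\BR$ and weak separation in $M_\BR$, are mutually exclusive and exhaustive, using $M_\BR = \ker \gamma_\BR$.
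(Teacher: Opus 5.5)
Your $\Rightarrow$ direction is correct, and cleaner than the paper's argument. A common interior point gives $\mu=a-b\in M_\BR$, which is nonzero because $J_f\neq J_g$. Then $m=-\mu$ is a nonzero form with $m\ge 0$ on $\sigma_f$ and $m\le 0$ on $\sigma_g$, so $\sigma_f\cap\sigma_g$ lies in the hyperplane $\{m=0\}$.

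The $\Leftarrow$ direction, however, is not proved. The step you flag as the main obstacle cannot be overcome, because the implication is false as stated. Take $S=\BC[T_1,\dots,T_5]$ graded by $D=\BZ^2$ with $\deg T_1=\deg T_4=(1,0)$, $\deg T_2=(0,1)$, $\deg T_3=\deg T_5=(2,1)$, and let $f=T_4T_5$, $g=T_2T_3\in\Gen^D(S)$.
\begin{itemize}
\item $\CC_D(f)=\Cone((1,0),(2,1))$ and $\CC_D(g)=\Cone((0,1),(2,1))$ meet only in the ray through $(2,1)$, so $\lint(\CC_D(f)\cap\CC_D(g))=\emptyset$.
\item Using the basis $m_1=(1,0,0,-1,0)$, $m_2=(0,1,0,2,-1)$, $m_3=(0,0,1,0,-1)$ of $M_S$, the forms $\pr_1|_M,\dots,\pr_5|_M$ become $e_1,e_2,e_3,-e_1+2e_2,-e_2-e_3$.
\item Hence $\sigma_f=\Cone(e_1,e_2,e_3)\subseteq\{x_3\ge 0\}$ and $\sigma_g=\Cone(e_1,-e_1+2e_2,-e_2-e_3)\subseteq\{x_3\le 0\}$, so $\lint(\sigma_f\cap\sigma_g)=\emptyset$ as well.
\end{itemize}
The only separating forms are the positive multiples of $m_3$. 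Its coordinates vanish at $4\in J_f$ and at $2\in J_g$, so $-m$ can never be written as $a-b$ with $a>0$ on $J_f$ and $b>0$ on $J_g$. The degeneracy you worried about is intrinsic, and no perturbation removes it. Geometrically, $\sigma_f\cap\sigma_g=\Cone(e_1,e_2)$ is a face of $\sigma_f$ but not of $\sigma_g$, since $e_2=\tfrac12\bigl(e_1+(-e_1+2e_2)\bigr)$.

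What Gale duality actually gives is this: $\lint(\CC_D(f)\cap\CC_D(g))\neq\emptyset$ if and only if $\sigma_f\cap\sigma_g=\sigma_{I\setminus(J_f\cup J_g)}$. That is, the two cones must meet exactly in the common face spanned by their shared generators. This condition implies, but is in general strictly stronger than, $\lint(\sigma_f\cap\sigma_g)=\emptyset$.

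In your language, the common-face condition produces (via the separation lemma for cones meeting in a common face) an $m\in M_\BR$ with $m_i=0$ for $i\notin J_f\cup J_g$, $m_i>0$ on $J_g\setminus J_f$ and $m_i<0$ on $J_f\setminus J_g$. From such an $m$ your splitting $-m=a-b$ works, choosing the values on $J_f\cap J_g$ freely.

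Your Farkas fallback would land in the same place. The Gordan alternative to `the interiors of the weight cones meet' is a nonzero relation $\sum_i\lambda_i\,\pr_i|_M=0$ with $\lambda\ge 0$ on $J_f$ and $\lambda\le 0$ on $J_g$. This says precisely that $\sigma_f\cap\sigma_g\supsetneq\sigma_{I\setminus(J_f\cup J_g)}$, not that the interiors of $\sigma_f$ and $\sigma_g$ meet. Also, the two systems as you phrase them are not mutually exclusive, since your own $\Rightarrow$ shows the first implies the second.

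The paper's argument for $\Leftarrow$ does not rescue the literal statement either. It asserts without justification that $\sigma_f$ and $\sigma_g$ share at most one generator, and that a shared generator yields a common divisor of $f$ and $g$ forcing the weight cones to overlap. In the example above they share exactly one generator, $\pr_1|_M$, yet $f$ and $g$ are coprime and the weight cones do not overlap.
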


\begin{proof}
\Ra Let $\lint(\CC_D(f) \cap \CC_D(g)) \neq\emptyset$, where $f = \prod_{i\in I} T_i$ and $g = \prod_{j\in J} T_j$ such that $I$ and $J$ are subsets of $\{1, \ldots, n\}$ of cardinality $r$.
Then there exists an element $d' \in \lint(\CC_D(f) \cap \CC_D(g))$ and elements $f', g' \in S$ such that $f\mid f'$ and $g\mid g'$ and $\deg(f') = d' = \deg(g')$.
     Thus there are $\epsilon_i, \delta_j \in \BN_{\ge 1}$, such that
     \begin{align*}
         \sum_{i\in I} \epsilon_i \cdot \deg(T_i) \equ \sum_{j\in J} \delta_j \cdot \deg(T_j) .
     \end{align*}
    In particular, this equation gives rise to an element in $M_S$: For relevant $f = T_1^{\epsilon_1} \cdot \ldots \cdot T_n^{\epsilon_n}$, where $\epsilon_i \in \BN$, consider the vector $v_f = \{\epsilon_1, \ldots \epsilon_n\}$.
    Now by construction, $m' = v_{f'}-v_{g'} \in M_S$, thus we can take it as a generator of $M_S$. In particular, for all $i \in \{1, \ldots, n\}\setminus I$ there is a $j \in\{1, \ldots, n\}\setminus J$ with $\pr_i(m') = \pr_j(m')$, so that the image of $m'$ is either a generator of both $\sigma_f$ and $\sigma_g$ or that $m'$ lies on the boundary of $\sigma_f$ or $\sigma_g$. In particular, $\lint(\sigma_f \cap \sigma_g) = \emptyset$
    \\

    \La Conversely, let $\lint(\sigma_f\cap \sigma_g) = \emptyset$, where without loss of generality, $M_S = \langle m_1, \ldots, m_{n-r} \rangle$ and let $A$ be the matrix with columns $m_i$. 
    
 
    Then either $\sigma_f$ and $\sigma_g$ share exactly one generator or the generators of $\sigma_f$ and $\sigma_f$ are disjoint.
    In the first case, we know that $f$ and $g$ have at least one common homogeneous divisor $x$, giving rise to a (possibly trivial) linear dependency between homogeneous divisors of $f$ and homogeneous divisors of $g$ (cf.\ \cite{paper3}, Definition 3.7). Thus $\lint(\CC_D(f) \cap \lint(\CC_D(g)) \neq \emptyset$.
    In the second case, $J_f$ and $J_g$ are disjoint, or in other words, we may assume $J_g \subseteq I \setminus J_f$. Since $M_S$ is not trivial, there has to be a linear dependency between the homogeneous divisors of $f$ and $g$, and again, $\lint(\CC_D(f) \cap \lint(\CC_D(g)) \neq \emptyset$.
\end{proof}


\subsection{Systems of Fans} \label{sec:sys_fans}
We begin with a summary of the standard theory on systems of fans, drawing from \cite{ANH}.

When we speak of a cone $\sigma$ in $N_S$ we always think of a strictly convex simplicial rational polyhedral cone in $(N_S)_\BR$.
We denote a face $\tau$ of $\sigma$ by $\tau \preceq \sigma$. If $\Delta'$ is a subfan of $\Delta$, we will write $\Delta' \preceq \Delta$. A fan is called \emph{irreducible} if it consists of all the faces of a single cone.

\begin{definition}\label{def:sys_fans}
    Let $I$ be a finite index set. A collection $\CS = \left(\Delta_{ij}\right)_{i, j \in I}$ of fans in $(N_S)_\BR$ is called a \emph{system of fans} if for all $i, j, k \in I$:
    \begin{enumerate}[label=(\roman*)]
        \item $\Delta_{ij} \equ \Delta_{ji}$,
        \item $\Delta_{ij} \cap \Delta_{jk} \ \preceq \ \Delta_{ik}$.
    \end{enumerate}
\end{definition}

There is a natural way to associate a toric prevariety with a given system of fans. For this, just take $X_i$ to be the toric variety corresponding to the fan $\Delta_{ii}$, while (ii) ensures that the gluing $X_\CS= \cup_{i \in I} X_i$ is well behaved.

Now let $\Delta_f$, where $f \in \Gen_B^D(S)$, be the fan of faces of $\sigma_f$ and for $f, g \in \Gen^D_B(S)$ the intersection will always be $\Delta_{fg} = \{\{0\}\}$. Then it holds (cf.\ \cite{BS}, Proposition 3.4):

\begin{theorem}[Multigraded rings yield simplicial toric prevarieties]\label{thm:ring_to_prev}\makebox{}{}\\
    Let $\CS_B = (\Delta_f, \Delta_{fg})_{f,g\in\Gen_B^D}$ be the simplicial system of fans associated to the conical ring $(S, B)$. Then $\Proj^D_B(S) = X_{\CS_B}$ is a simplicial toric prevariety.
\end{theorem}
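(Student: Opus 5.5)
The plan is to reduce the statement to two facts: each basic affine piece $\Spec(S_{(f)})$, $f \in \Gen^D_B(S)$, is the affine simplicial toric variety $U_{\sigma_f}$ of the cone $\sigma_f$; and the gluing data of $\Proj^D_B(S)$ from Proposition~\ref{prop:Proj_B^D} agree with those of the system of fans $\CS_B$. By Proposition~\ref{prop:Proj_B^D}, since $S$ is a noetherian factorially graded polynomial ring, $\Proj^D_B(S)$ is the union of the $\Spec(S_{(f)})$ over $f\in\Gen^D_B(S)$, with $\Spec(S_{(f)})$ and $\Spec(S_{(g)})$ glued along $\Spec(S_{(fg)})$. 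For $f=T^{J_f}$, \cite{BS}, Proposition 3.4 and \eqref{eq:cone_def} give
\begin{align*}
 S_{(f)} \;\cong\; \BC[M_{J_f}] \;=\; \BC[\sigma_f^\vee\cap M_S].
\end{align*}
Indeed, a degree-zero Laurent monomial $T^m$ with $m\in M_S=\ker\gamma$ lies in $S_f$ exactly when $m_i\ge 0$ for $i\notin J_f$. The preceding lemma shows that $\sigma_f$ is full-dimensional. It is simplicial because it is generated by the $n-r=\rank M_S$ vectors $\pr_i|_M$, $i\in I\setminus J_f$. It is strictly convex because $\sigma_f^\vee\cap M$ must generate $M$: every $m\in M$ becomes nonnegative on $I\setminus J_f$ after adding a suitable multiple of a degree-zero element $T^{a}/f^k$, using $\CC_D(f)$ full-dimensional. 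So each chart is $U_{\sigma_f}$, the toric variety of $\Delta_f=\Delta_{ff}$, and all charts share the torus $\Spec \BC[M_S]=\Spec(S_{(T_1\cdots T_n)})$.

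Next, compare the gluings. Localising further at $g$ gives $S_{(fg)}\cong \BC[M_{J_f\cup J_g}]$, and $M_{J_f\cup J_g}$ is the semigroup of the cone $\sigma_{I\setminus(J_f\cup J_g)}=\sigma_f\cap\sigma_g$, which is a common face of both cones. Hence $\Spec(S_{(fg)})\subseteq \Spec(S_{(f)})$ is the torus-invariant open set attached to that face, and the transition maps are the identity on the torus $\Spec\BC[M_S]$. I will take $\Delta_{fg}$ to be the fan of faces of $\sigma_f\cap\sigma_g$; the statement's phrasing "$\Delta_{fg}=\{\{0\}\}$" should be read as the generic case. The axioms of Definition~\ref{def:sys_fans} are then immediate: symmetry is clear, and
\begin{align*}
 \Delta_{fg}\cap\Delta_{gh}\;\preceq\;\Delta_{fh}
\end{align*}
holds because every cone in the left-hand side is some $\sigma_K$ with $K\supseteq J_f\cup J_h$, so it is a face of $\sigma_f\cap\sigma_h$. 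The last step is to conclude $X_{\CS_B}\cong\Proj^D_B(S)$ by gluing these identifications: both are obtained from the same affine charts glued along the same open subsets by the same maps. The result is a normal prevariety with a dense open torus whose action extends, that is, a simplicial toric prevariety.

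I expect the main obstacle to be the gluing step. The faces $\sigma_{I\setminus(J_f\cup J_g)}$ must really be faces, which requires that the $\pr_i|_M$ generate distinct rays. Also, $\Spec(S_{(fg)})$ must coincide with the toric open set of that face and not merely be birational to it, because $S_{(fg)}$ could a priori be larger than the semigroup ring of the face. To settle this I would argue directly with monomials: $S_{(fg)}$ is spanned by $T^m$ with $m\in M_S$ and $m_i\ge0$ for $i\notin J_f\cup J_g$. This is exactly $(\sigma_f\cap\sigma_g)^\vee\cap M_S$, provided $\sigma_f\cap\sigma_g$ is the cone generated by the common generators. That last point is where simpliciality is used, and where I would invoke Proposition~\ref{prop:int_CC=sigma} or redo its argument if necessary.
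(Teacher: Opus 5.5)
Your overall route is sound and more explicit than the paper's. You identify each chart $\Spec(S_{(f)})$ with $U_{\sigma_f}$ via the monomial description, then match the gluing along $\Spec(S_{(fg)})$ with a system of fans. The paper's proof only observes that $\sigma_f$ is simplicial and then cites \cite{ANH}, Theorem 3.6. You are also right to distrust $\Delta_{fg}=\{\{0\}\}$: it would glue, e.g., the three affine charts of $\BP^2$ only along the torus. It is the correct datum exactly when $J_f\cup J_g=I$, not ``generically''.

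\textbf{The gap.} Your replacement contains a step that fails. The cone $\tau_{fg}:=\sigma_{I\setminus(J_f\cup J_g)}$ spanned by the common generators is in general \emph{not} $\sigma_f\cap\sigma_g$. The two differ precisely in the non-separated situations this theorem is about. In Example~\ref{ex:max_cones}(1), take $f=zw$ and $g=xz$:
\begin{itemize}
\item $\sigma_f\cap\sigma_g=\Cone(e_2,e_1+e_2)=\sigma_{xz}$, which is full-dimensional and not a face of $\sigma_{zw}$;
\item but $S_{(fg)}=S_{(xzw)}$ is the semigroup ring of $\tau_{fg}=\Cone(\pr_y|_M)=\Cone(e_2)$.
\end{itemize}
If $\Delta_{fg}$ is the fan of faces of $\sigma_f\cap\sigma_g$, axiom (ii) of Definition~\ref{def:sys_fans} fails. (Taking $k=i$ there, together with (i), forces $\Delta_{fg}\preceq\Delta_f$.) In any case $X_{\CS_B}$ would be glued along a larger open set than $\Spec(S_{(fg)})$. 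So the proviso at the end of your proposal cannot be established, via Proposition~\ref{prop:int_CC=sigma} or otherwise. The equality $\tau_{fg}=\sigma_f\cap\sigma_g$ holds exactly when $\Spec(S_{(f)})\cup\Spec(S_{(g)})$ is separated.

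\textbf{The repair.} It is immediate and already contained in your monomial computation: let $\Delta_{fg}$ be the fan of faces of $\tau_{fg}$.
\begin{itemize}
\item Since $\{\pr_i|_M\}_{i\in I\setminus J_f}$ is a basis of $(N_S)_\BR$, every subset spans a face. Hence $\tau_{fg}$ is a common face of $\sigma_f$ and $\sigma_g$, and $S_{(fg)}=\BC[\tau_{fg}^\vee\cap M_S]$ on the nose. Your worry that $S_{(fg)}$ might exceed the semigroup ring of the face disappears.
\item You do not need distinct rays. Only linear independence inside a single $\sigma_f$ is used. Coincidences across cones are exactly what produce doubled points. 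For the ring $R=\BC[x,y,z]$ of Example~\ref{ex:maps_special}, with degrees $e_1,e_2,e_1+e_2$, one has $\pr_x|_M=\pr_y|_M$, so $\sigma_{xz}=\sigma_{yz}$ while $\tau=\{0\}$.
\item For axiom (ii): a cone in $\Delta_{fg}\cap\Delta_{gh}$ is a face of $\sigma_g$, hence equals $\sigma_K$ for a unique $K\subseteq I\setminus J_g$. Uniqueness gives $K\subseteq I\setminus(J_f\cup J_g\cup J_h)$, so the cone is a face of $\tau_{fh}$. Your ``$K\supseteq J_f\cup J_h$'' has the indexing inverted.
\end{itemize}
With this correction, the rest of your argument goes through.
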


\begin{proof}
By definition of relevant elements, $\sigma_f$ is automatically simplicial. Then apply \cite{ANH}, Theorem 3.6.
\end{proof}

A system of fans is a fan if the system collapses to a single fan.
We can immediately deduce.

\begin{corollary}\label{cor:sep_crit_toric_version}
    Let $S$ be a noetherian polynomial ring over a field, $B \subseteq S_+$. Then $\Proj^D_B(S)$ is separated if and only if $\CS_B$ is a fan.
\end{corollary}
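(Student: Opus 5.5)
The plan is to reduce both directions to the separatedness criterion for toric prevarieties attached to systems of fans (\cite{ANH}), combined with Theorem~\ref{thm:ring_to_prev}. By that theorem, $\Proj^D_B(S) = X_{\CS_B}$, glued from the affine toric varieties $X_f = \Spec(S_{(f)}) = \Spec(\BC[\sigma_f^\vee \cap M])$ for $f \in \Gen^D_B(S)$, with $X_f$ and $X_g$ glued along the open subset given by $\Delta_{fg}$. Since the gluing is equivariant, separatedness can be tested on pairs of charts: $X_{\CS_B}$ is separated if and only if, for all $f,g$, the diagonal map $X_f \cap X_g \to X_f \times X_g$ is a closed embedding. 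By the standard affine criterion this holds if and only if $S_{(f)} \otimes S_{(g)} \to S_{(fg)}$ is surjective.

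In toric terms, I would use the well-known fact (cf.\ \cite{CLS}, or the separatedness statement in \cite{ANH}) that two affine toric charts $U_{\sigma}, U_{\sigma'}$ glued along $U_\tau$ with $\tau \preceq \sigma, \sigma'$ are separated precisely when $\tau = \sigma \cap \sigma'$. The implication $\Leftarrow$ is the separation lemma: $\sigma \cap \sigma'$ is a common face, and $(\sigma\cap\sigma')^\vee\cap M = (\sigma^\vee\cap M) + (\sigma'^\vee \cap M)$. The implication $\Rightarrow$ follows because, if $\tau \subsetneq \sigma\cap\sigma'$, the orbit closures attached to a cone in $(\sigma\cap\sigma')\setminus\tau$ give non-separated points, namely two limits of the same one-parameter subgroup. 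Applying this to every pair $(f,g)$ shows that $X_{\CS_B}$ is separated if and only if $\Delta_{fg}$ equals the fan of faces of $\sigma_f \cap \sigma_g$ for all $f,g$, and $\sigma_f \cap \sigma_g$ is a face of both cones. This is exactly the statement that the cones $\{\sigma_f\}$ with all their faces form a single fan $\Delta$, and that every $\Delta_{fg}$ is the restriction $\Delta_f\cap\Delta_g$. In other words, the system $\CS_B$ collapses to one fan.

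To identify the gluing data concretely, I would verify that $\Spec(S_{(fg)})$ is the affine toric variety of the cone $\sigma_{I - (J_f\cup J_g)} = \sigma_f\cap$-type face. This means computing $S_{(fg)} = \BC[(\BZ^{J_f\cup J_g}\oplus\BN^{I\setminus(J_f\cup J_g)})\cap M]$, exactly as in \cite{BS}, Proposition 3.4. The true overlap cone is therefore $\tau_{fg} = \sigma_{I\setminus(J_f\cup J_g)}$, which is a common face of $\sigma_f$ and $\sigma_g$.

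The main obstacle is this identification. One has to make sure that the overlap cone $\tau_{fg}$ used in the gluing, rather than the convention "$\Delta_{fg}=\{0\}$" stated before the theorem, is what enters the criterion. Then one compares $\tau_{fg}$ with $\sigma_f\cap\sigma_g$. Here the non-strict inclusion $\tau_{fg}\subseteq \sigma_f\cap\sigma_g$ is automatic, and the equality is precisely what separatedness demands. Proposition~\ref{prop:int_CC=sigma} may help to control when the interiors meet, since overlapping interiors immediately force non-separatedness, as in the doubled-origin examples.
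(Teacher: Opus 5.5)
Your proposal is correct and follows the paper's route. The paper deduces the corollary immediately from Theorem~\ref{thm:ring_to_prev} together with the separatedness criterion for toric prevarieties $X_\CS$ from \cite{ANH}, and you unpack that criterion chart by chart, using the separation lemma and the one-parameter-subgroup argument. You also take the gluing cone of $\Spec(S_{(f)})$ and $\Spec(S_{(g)})$ to be the common face $\sigma_{I\setminus(J_f\cup J_g)}$ of $\Spec(S_{(fg)})$, rather than the stated convention $\Delta_{fg}=\{\{0\}\}$; this is the right reading. With the literal convention, $\Proj(\BC[x,y,z])$ would already fail to give $\BP^2$, and your choice is exactly what makes ``separated $\iff$ $\sigma_{I\setminus(J_f\cup J_g)}=\sigma_f\cap\sigma_g$ for all $f,g$ $\iff$ $\CS_B$ is a fan'' hold.
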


In particular, applying Prop~\ref{prop:int_CC=sigma} to \cite{paper2}, Proposition 1.1 yields:

\begin{corollary}\label{cor:sep_algebraic_geom}
Let $(S, B)$ be a conical ring and $f, g \in \Gen^D_B(S)$. Then the following statements are equivalent: 
    \begin{enumerate}[label=(\alph*)]
        \item $\mu_{(fg)}$ is surjective.
        \item $\lint\left(\CC_D(f) \cap \CC_D(g)\right) \ \neq \ \emptyset$.
        \item $\lint(\sigma_f\cap\sigma_g) \equ \emptyset$.
    \end{enumerate}
\end{corollary}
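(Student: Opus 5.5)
The plan is to prove the equivalences as two links: (a)$\Leftrightarrow$(b), which comes from the earlier separatedness criterion, and (b)$\Leftrightarrow$(c), which is Proposition~\ref{prop:int_CC=sigma}.

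For (a)$\Leftrightarrow$(b) we use \cite{paper2}, Proposition~1.1. It states that for relevant homogeneous $f,g$, the multiplication map
\begin{align*}
\mu_{(fg)}\colon S_{(f)}\otimes_{S_0} S_{(g)} \to S_{(fg)}
\end{align*}
is surjective if and only if the weight cones $\CC_D(f)$ and $\CC_D(g)$ meet in a full-dimensional cone. Elements of $\Gen^D_B(S)$ are relevant, being generators of $B\cap\Rel^D(S)$. Moreover, $\CC_D(f)$ and $\CC_D(g)$ depend only on the ring $S$ and not on $B$. So the criterion applies verbatim to $f,g\in\Gen^D_B(S)\subseteq \Gen^D(S)$, and the passage to the conical ring $(S,B)$ adds no new difficulty.

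For (b)$\Leftrightarrow$(c) we apply Proposition~\ref{prop:int_CC=sigma} to the same pair $f,g\in\Gen^D(S)$. This requires $S$ to be a noetherian factorially graded polynomial ring, the standing assumption of this section. That assumption guarantees that $f$ and $g$ are squarefree monomials with supports $J_f,J_g$ of size $r$, so that $\sigma_f=\sigma_{I-J_f}$ and $\sigma_g=\sigma_{I-J_g}$ are defined. The degenerate case $f=g$ is excluded by that proposition and must be handled separately. If $f=g$, then $\mu_{(ff)}$ is trivially surjective and $\CC_D(f)$ has nonempty interior, so (a) and (b) both hold. We therefore either read the corollary for distinct generators or note this case explicitly.

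Chaining the two equivalences gives the result. The main obstacle is (b)$\Leftrightarrow$(c), and in particular making sure the cited Proposition~\ref{prop:int_CC=sigma} really yields the stated dichotomy, namely empty interior of $\sigma_f\cap\sigma_g$. If an independent check is needed, I would argue via Gale duality (Remark~\ref{rem:cones_connect}). A degree $d'$ in the common interior of both weight cones gives $m'=v_{f'}-v_{g'}\in M_S$ with $f\mid f'$ and $g\mid g'$. This $m'$ is strictly positive on $J_f\setminus J_g$, strictly negative on $J_g\setminus J_f$, and zero outside $J_f\cup J_g$. Hence $m'$ defines a linear form on $(N_S)_\BR$ that is nonnegative on the generators $\pr_i|_M$ of $\sigma_f$ (those with $i\notin J_f$) and nonpositive on those of $\sigma_g$. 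It therefore separates the two cones, which forces $\lint(\sigma_f\cap\sigma_g)=\emptyset$. The converse runs the same separation argument backwards, producing such an $m'$ from a separating hyperplane.
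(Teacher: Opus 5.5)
You follow the paper's own route. The paper obtains the corollary by combining \cite{paper2}, Proposition~1.1, with Proposition~\ref{prop:int_CC=sigma}. Excluding $f=g$ is necessary, not optional: there (a) and (b) hold, but $\sigma_f$ is full-dimensional, so (c) fails. Your check of (b)$\Rightarrow$(c) is correct up to a sign: for $m'=v_{f'}-v_{g'}$ one has $m'_i\le 0$ for $i\notin J_f$ and $m'_i\ge 0$ for $i\notin J_g$. It actually proves more. It gives $\sigma_f\cap\sigma_g\subseteq (m')^{\perp}$, hence $\sigma_f\cap\sigma_g=\sigma_{fg}$, the common face spanned by the $\pr_i|_M$ with $i\notin J_f\cup J_g$.

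The gap is the converse. From $\lint(\sigma_f\cap\sigma_g)=\emptyset$ you only get a \emph{weakly} separating $m\in (M_S)_\BR$. Such an $m$ may vanish on generators that are not common to both cones. Then it gives no relation with strictly positive coefficients on $J_f\setminus J_g$ and strictly negative ones on $J_g\setminus J_f$, hence no degree interior to both weight cones.

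This cannot be repaired, because (c)$\Rightarrow$(b) is false once $\rank M_S\ge 3$. Take $S=\BC[T_1,\dots,T_6]$, $D=\BZ^3$, with
$\deg T_1=(-2,1,0)$, $\deg T_2=(1,-2,0)$, $\deg T_3=\deg T_6=(0,0,1)$, $\deg T_4=(1,0,0)$, $\deg T_5=(0,1,0)$,
and $f=T_4T_5T_6$, $g=T_1T_2T_3\in\Gen^D(S)$. In the basis $(1,0,0,2,-1,0)$, $(0,1,0,-1,2,0)$, $(0,0,1,0,0,-1)$ of $M_S$ the rays are $e_1,e_2,e_3,(2,-1,0),(-1,2,0),-e_3$. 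So $\sigma_f=\Cone(e_1,e_2,e_3)$ and $\sigma_g=\Cone((2,-1,0),(-1,2,0),-e_3)$, and $\sigma_f\cap\sigma_g=\Cone(e_1,e_2)$ has empty interior: (c) holds. But $\CC_D(f)\cap\CC_D(g)=\Cone(e_3)$, so (b) fails. Then (a) fails too, either by the same Proposition~1.1 or directly, since $\sigma_{fg}=\{0\}\neq\sigma_f\cap\sigma_g$. Up to scaling, the only separating hyperplane is the plane spanned by $e_1,e_2$. It vanishes on four of the six rays, which is exactly your failure mode.

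The implication also fails in rank $2$ when two variables give the same ray. Take degrees $(1,0),(0,1),(1,1),(1,1)$ with $f=T_2T_3$ and $g=T_1T_4$: then $\sigma_f\cap\sigma_g=\Cone(e_1)$ while $\CC_D(f)\cap\CC_D(g)=\Cone((1,1))$. So the ``$\Leftarrow$'' half of Proposition~\ref{prop:int_CC=sigma} does not hold in this generality and cannot be cited as a black box. (Its proof only considers cones sharing one generator or none.)

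What your forward argument, together with the separation lemma (\cite{CLS}, Lemma~1.2.13), does prove is the equivalence of (a), (b) and the corrected condition (c$'$): $\sigma_f\cap\sigma_g=\sigma_{fg}$. Indeed, under (c$'$) there is $m\in M_S$ with $m^\perp\cap\sigma_f=m^\perp\cap\sigma_g=\sigma_{fg}$. This $m$ has strict, opposite signs on $J_g\setminus J_f$ and $J_f\setminus J_g$. Adding a large multiple of $\sum_{i\in J_f\cap J_g}\deg T_i$ to both sides of $\sum_i m_i\deg T_i=0$ then gives a degree in $\lint\CC_D(f)\cap\lint\CC_D(g)$.
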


\begin{remark}
    Let $X$ be a quasiprojective toric prevariety with full-dimensional convex support. Then Corollary~\ref{cor:sep_algebraic_geom} implies that each GIT cone $\lambda(d)$ corresponds to an open toric subvariety in $X$.
\end{remark}

Next, we show that $\Proj^D(S)$ is a good prequotient in the sense of \cite{ANH}, Definition 6.1: 

\begin{definition}\label{def:toric_prequot}
    Let $k$ be an algebraically closed field, $X$ an algebraic prevariety over $k$ and let a group $\BG$ act on $X$ by means of a regular map $\BG\times X\to X$. 
    A $\BG$-invariant regular map $p\colon X\to Y$ onto a prevariety $Y$ is called a \emph{good prequotient} for the action of $\BG$ on $X$, if
    \begin{enumerate}[label=(\arabic*)]
        \item $p$ is an affine map.
        \item for all open $V \subseteq Y$, $\CO_Y(V) = \CO_X(p^{-1}(V))^G$.
    \end{enumerate}
\end{definition}

Applying \cite{ANH}, Proposition 6.4 and Corollary 6.5., $\Proj^D(S)$ is even a good prequotient in the category of toric prevarieties, i.e.\ a \emph{toric prequotient} (cf.\ \cite{ANH}, Definition 7.1).

\begin{corollary}[GIT Quotient]\label{cor:good_preq}
    $\Proj^D(S)$ is a good prequotient for the action of $G$ on $\Spec(S)$. In fact, it is even a \emph{toric prequotient}.
\end{corollary}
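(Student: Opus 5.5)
To prove Corollary~\ref{cor:good_preq}, I would first make the quotient map explicit and then check the two conditions of Definition~\ref{def:toric_prequot} on the affine pieces. Let $\widehat{X} \deq \bigcup_{f \in \Gen^D(S)} \Spec(S_f) \subseteq \Spec(S)$ be the complement of $V(S_+)$. It is covered by the principal opens $D(f) = \Spec(S_f)$. The quasitorus $G = \Spec(S_0[D])$ acts on $\Spec(S)$ through the $D$-grading. The inclusions $S_{(f)} \hookrightarrow S_f$ glue along the overlaps $S_{(fg)} \hookrightarrow S_{fg}$ to a morphism $p\colon \widehat{X} \to \Proj^D(S)$. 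The gluing is compatible by the construction of Proposition~\ref{prop:Proj_B^D}. The map $p$ is $G$-invariant because degree-zero elements are exactly the $G$-invariants.

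For condition (1), affineness, I would show that $p^{-1}(\Spec(S_{(f)})) = \Spec(S_f)$ for every $f \in \Gen^D(S)$. This holds because a point of $\widehat{X}$ lies over $D_+(f)$ exactly when $f$ does not vanish there: on $D(g)$ the element $f$ becomes a unit of $S_{fg}$, and its image in $S_{(g)}$ is the degree-zero fraction $f^k/g'$. Since the $\Spec(S_{(f)})$ cover $\Proj^D(S)$, the map $p$ is affine.

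For condition (2), I would first verify it on the basic opens, where it amounts to $(S_f)^G = (S_f)_0 = S_{(f)}$. For a general open $V$, I would cover $V$ by opens of the form $\Spec(S_{(fh)})$. These are again basic affines, since $fh$ is relevant whenever $f$ is. The sheaf condition together with exactness of taking invariants, which holds because $G$ is diagonalizable so taking the degree-zero part is exact, then gives $\CO_{\widehat{X}}(p^{-1}(V))^G = \CO(V)$.

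For the toric refinement, I would invoke \cite{ANH}, Proposition 6.4 and Corollary 6.5. This requires identifying $\widehat{X}$ as the toric variety of the fan in $\BR^n$ whose cones are the faces $\Cone(e_i \mid i \in I \setminus J_f)$, and $p$ as the toric morphism induced by the lattice map $\BZ^n \to N_S$ dual to $M_S \hookrightarrow \BZ^n$. Under this map the coordinate cones go to the cones $\sigma_f$ of Theorem~\ref{thm:ring_to_prev}. The main obstacle is this identification: one needs the images of the cones to be exactly the $\sigma_f$, and the lattice map to have finite cokernel when $D$ has torsion, so that the hypotheses of \cite{ANH}, Corollary 6.5 hold. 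I would check this using equation~\eqref{eq:cone_def}.
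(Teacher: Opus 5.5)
Your proposal is correct and follows essentially the paper's route: affineness from $p^{-1}(\Spec(S_{(f)}))=\Spec(S_f)$, the invariant condition from $(S_f)^G=(S_f)_0=S_{(f)}$, and \cite{ANH}, Proposition 6.4 and Corollary 6.5 for the toric refinement. Two differences: your identification of $p$ with the toric morphism induced by $\BZ^n\to N_S$ (coordinate cones mapping onto the $\sigma_f$ via \eqref{eq:cone_def}) supplies something the paper only asserts, while you omit the surjectivity of $p$ required by ``onto'' in the definition (the paper cites it from \cite{paper1}), which follows chartwise because $S_{(f)}$ is a direct summand of $S_f$ as an $S_{(f)}$-module.
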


\begin{proof}
    By construction of $\Proj^D(S)$, we already know that 
    \begin{align*}
        \CO_{\Proj^D(S)}(V) \equ \CO_{\Spec(S)}(p^{-1}(V))^G
    \end{align*}
    for $V \subseteq \Proj^D(S)$ open and $p = \pi_+$ (see \cite{paper1}), and $p$ is surjective by \cite{paper1}, Proposition 2.13 (a). A basic affine open subset of $\Proj^D(S)$ is given by $\Spec(S_{(f)})$ for some relevant $f \in S$. Thus $p^{-1}(\Spec(S_{(f))}) = \Spec(S_f)$ is affine. In particular, $\Proj^D(S)$ is a good prequotient for the action of $G$ on $\Spec(S)$. By the universal property of $p$, clearly every $G$-invariant regular map from $\Spec(S)$ to some prescheme factors uniquely through $p$. Hence $\Proj^D(S)$ is even a categorical prequotient (also cf.\ \cite{ANH}, Proposition 6.4 and Corollary 6.5). As $\Proj^D(S)$ is a simplicial toric prevariety by Theorem~\ref{thm:ring_to_prev}, the claim follows.
\end{proof}

There are different charts giving the same toric prevariety, namely the covering by maximal $G$-stable separated charts and the covering by maximal $G$-stable affine charts. The latter corresponds to a system $\CS$ where every fan is irreducible. Such a system will be called \emph{affine}.
It is immediate that the system from Theorem~\ref{thm:ring_to_prev} is affine, i.e.\ the charts given by the collection of $f \in \Gen^D_B(S)$ yield an affine system.

\begin{example}\label{ex:sys_fans}
    We want to compute (some of) the toric prevarieties corresponding to the affine systems from Example~\ref{ex:max_cones}. 
First note that for $B_1 = \{xw, yw, zw\}$ and $B_2 = \{xw, yw, xz, yz\}$ it holds
    \begin{align*}
        X_{\CS, B_1} \equ \BP_\BC^2\ \text{  and  }\ X_{\CS, B_2} \equ \Bl_p(\BP_\BC^2),
    \end{align*}
    where $p = (1:0:0)$. Thus we can view $X_\CS$ as a $\BP_\BC^2$ glued with $\Bl_0(\BA_\BC^2)$ (cf.\ \cite{CLS}, Example 3.1.14) along $\BA_\BC^2\setminus\{0\}$ as well as $\Bl_p(\BP_\BC^2)$ glued with $\BA_\BC^2$ along $\BA_\BC^2\setminus\{0\}$ (i.e.\ this is the covering by maximal $G$-stable affine charts). 
    Note that we can also view $X_\CS$ as $\BP_\BC^2$ glued with $\Bl_p(\BP_\BC^2)$ along $U_1 \cup U_2$, where $p \in U_0$ and $U_i$ are the standard open affines of $\BP_\BC^2$ (this is the covering by maximal $G$-stable separated charts). \\

\end{example}

We want to emphasize that the irrelevant ideals by \cite{Cox} and \cite{BS} differ in general. This is one of the major difficulties in understanding the relation between the multigraded Proj and the Cox ring construction.

\begin{example}
    By \cite{CLS}, Example 5.2.3, the toric variety $\Bl_0(\BA_\BC^2)$ has total coordinate ring $S = \BC[x, y, z]$, where $\deg(x), \deg(y) = 1$ and $\deg(z) = -1$. Choosing $M = \left[\begin{pmatrix}
        0\\1\\1
    \end{pmatrix}, \begin{pmatrix}
        1\\0\\1
    \end{pmatrix}\right]$, we see that $\sigma_x = \Cone(e_1, e_1+e_2)$ and $\sigma_y = \Cone(e_2, e_1+e_2)$ give the toric variety we started with. However, as Brenner--Schröer Proj takes all maximal cones into account, we also have $\sigma_z = \Cone(e_1, e_2)$. In particular, Cox construction coincides with the conical ring $(S, B = (x, y))$, whereas the conical ring belonging to the Brenner--Schröer Proj is given by $(S, S_+ = (x, y, z))$. 
    
    As the latter contains a projective line with double origin (see \cite{paper1}, Example 1.28), $\Proj^D(S)$ is clearly non-separated. Note that $\Bl_0(\BA^2_\BC)$ is also given in Example~\ref{ex:sys_fans} by the open subset $\Spec(S_{(xz)}) \cup \Spec(S_{(yz)}) \subsetneq \Spec(S_{(z)})$, i.e.\ for $R = \BC[x, y, z, w]$ like in the previous example and  $B' = (xz, yz)$ we can identify (using Corollary~\ref{cor:subring_loc})
    \begin{align*}
      \Proj^{\BZ^2}_{B'}(R)  \equ \Bl_0(\BA^2)  \equ \Proj^{\BZ}_B(S).
    \end{align*}
    To see the corresponding isomorphisms of conical rings, we take the morphism $\varphi\colon S \to R_z$, $x \mapsto x$, $y \mapsto y$, and $z \mapsto \frac{w}{z}$ of multigraded rings. Now we just have to take the ring given by the image of $S$ in $R_z$, which is $S' := \BC[x, y, \frac{w}{z}] \subset R_z$, where clearly $\deg(x) = \deg(y) = 1$ and $\deg(\frac{w}{z}) = \deg(w) - \deg(z) = -1$. Thus the inverse of $\varphi$ is given by $\psi\colon S' \to S$, where $x\mapsto x$, $y\mapsto y$ and $\frac{w}{z}\mapsto z$, i.e.\ $S \cong S'$.
\end{example}

\begin{remark}\label{rem:negative_grading}
    In the previous example, we saw that it seems to be possible to rephrase a grading where a variable $T_i$ satisfies $\deg_D(T_i) < 0$ (or where $\deg(T_i) \in \BZ^r$ has a negative entry). In general, let there be $f, g \in S$ such that $\deg(f) = - \deg(g)$. Then $\BC[f, g] \to \BC[f', g', h]$, where $\deg(f') = (\deg(f), 0)$, $\deg(g') = (0, \deg(g))$ and $\deg(h) = (\deg(f), \deg(g))$ gives the isomorphism $\BC[f, g] \cong \BC[f', \frac{g'}{h'}]$. Hence $\deg(\frac{g}{h}) = -\deg(f)$.
\end{remark}

We want to study the $\bProj$ functor in this setting (cf.\ Proposition~\ref{prop:Proj_B^D}).

\begin{corollary}\label{cor:functor_special_setting}
    Let $S$ be a factorially graded noetherian polynomial ring over a field. Then $\bProj$ is a functor that maps $D_S$-graded conical rings $(S, B)$ to simplicial toric prevarieties $\Proj^D_B(S)$, and rational maps of conical rings are mapped to morphisms of toric prevarieties.
\end{corollary}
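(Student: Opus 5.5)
The plan has two parts: check that $\bProj$ sends objects to simplicial toric prevarieties, and check that it sends rational maps to toric morphisms compatibly with composition.

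\emph{Objects.} For a conical ring $(S,B)$ with $S$ a factorially graded noetherian polynomial ring, the set $\Gen^D_B(S)$ is finite and consists of square-free monomials. Each monomial has exactly $r=\rank(D)$ factors, by the unique factorization recalled from \cite{paper1}, Lemma 1.20. By Proposition~\ref{prop:Proj_B^D} we may therefore compute $\Proj^D_B(S)$ as the union of the charts $\Spec(S_{(f)})$ with $f\in\Gen^D_B(S)$. Each chart equals $\Spec(\BC[\sigma_f^\vee\cap M_S])$ by \eqref{eq:cone_def}, and the cone $\sigma_f$ is simplicial and of maximal dimension by the lemma on $\sigma_f$. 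Theorem~\ref{thm:ring_to_prev} then identifies $\Proj^D_B(S)$ with $X_{\CS_B}$, which is a simplicial toric prevariety with torus $\Spec(S_0[M_S])$. So the object part is just a combination of results already stated.

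\emph{Morphisms.} Let $\Phi\colon (R,B_R)\dashrightarrow (S,B_S)$ be a rational map of conical rings. Proposition~\ref{prop:Proj_B^D} already supplies the glued morphism $\bProj_{\text{rat}}(\Phi)$. It is built from the local maps $\phi_g\colon R_{(g)}\to S_{(f_g)}$, with $f_g$ chosen as in Remark~\ref{rem:choice_fg}, and these local maps agree on overlaps. What remains is to show that this morphism is \emph{toric}. I would argue as follows.
\begin{enumerate}[label=(\arabic*)]
\item Both $R_{(g)}=\BC[\sigma_g^\vee\cap M_R]$ and $S_{(f_g)}=\BC[\sigma_{f_g}^\vee\cap M_S]$ are semigroup algebras. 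Each $\phi_g$ is a morphism of multigraded rings, so it sends homogeneous elements to homogeneous elements. Since $R$ and $S$ are polynomial rings and hence domains, it sends Laurent monomials of degree zero to Laurent monomials of degree zero, up to a scalar that can be normalized.
\item Consequently $\phi_g$ is induced by a semigroup homomorphism $\sigma_g^\vee\cap M_R\to \sigma_{f_g}^\vee\cap M_S$. This homomorphism extends to a lattice map $M_R\to M_S$, and dualizing gives $F_g\colon N_S\to N_R$ with $F_g(\sigma_{f_g})\subseteq\sigma_g$.
\item Compatibility on overlaps forces all the $F_g$ to coincide with a single map $F$, since every chart contains the common open torus. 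Hence $F$ is a map of systems of fans $\CS_{B_S}\to\CS_{B_R}$, and by \cite{ANH} the glued morphism is the associated toric morphism.
\item Functoriality then follows. Identities go to identities, and composition is preserved because, by Remark~\ref{rem:choice_fg}, the local composites glue; the corresponding lattice maps compose accordingly.
\end{enumerate}

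\emph{Main obstacle.} The delicate point is step (1): showing that each local map is monomial, that is, equivariant with respect to the tori. I would first try using the torus-weight decomposition. Write $R_{(g)}=\bigoplus_{m}\BC\chi^m$ and use the fact that $\phi_g$ is the restriction of a graded map $R_g\to S_{f_g}$. The units of $R_g$, namely the monomials in the divisors of $g$ and their inverses, must map to units of $S_{f_g}$. In a localized polynomial ring, the homogeneous units are exactly scalar multiples of Laurent monomials in the divisors of $f_g$. This pins down the images of the divisors of $g$. For the remaining variables $T_i$, homogeneity alone does not force $T_i$ to go to a monomial, so here I would invoke factoriality of the grading: $D$-prime elements go to homogeneous elements, and I would restrict to the monomial maps arising from the degree data. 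If this fails in general, the fallback is to prove only that the map is equivariant after composing with a torus automorphism, which still suffices to call it a morphism of toric prevarieties.
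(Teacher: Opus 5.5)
Your treatment of objects and of the gluing matches the paper's proof. The paper cites Theorem~\ref{thm:ring_to_prev} for objects, glues the local maps $R_{(g)}\to S_{(f)}=S_{(\varphi(g))}$ into a morphism $\Proj^{D_S}_{B_S}(S)\to\Proj^{D_R}_{B_R}(R)$, and then calls this a morphism of toric prevarieties without arguing torus-equivariance. Your steps (1)--(3) try to supply that argument, and here there is a genuine gap that cannot be closed.

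Step (1) infers that degree-zero Laurent monomials go to Laurent monomials because homogeneous elements go to homogeneous elements. This inference fails. The rings $R_{(g)}$ and $S_{(f_g)}$ are concentrated in degree $0$, so being a morphism of multigraded rings puts no condition on $\phi_g$. Definition~\ref{def:cat_con}~(7) also does not require $\varphi(g)$ or $f_g$ to be monomials, nor that $\phi_g$ come from a graded map $R_g\to S_{f_g}$. Even when such a map exists, your unit argument only controls the images of the divisors of $g$, as you suspected.

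Here is a concrete counterexample. Let $R=S=\BC[x,y]$ with the standard $\BZ$-grading and $B_R=B_S=S_+$, and let $\theta$ be the graded automorphism $x\mapsto x$, $y\mapsto x+y$. Take $\varphi=\theta|_{B_R}$, take $f_g=\theta(g)$ (homogeneous of positive degree, hence relevant and in $S_+$), and let $\phi_g$ be induced by $\theta$. All conditions of (7) hold, yet on the chart $g=x$ the local map is $y/x\mapsto 1+y/x$. The glued morphism is the automorphism of $\BP^1$ sending the torus-fixed point $V(y)$ to the point $V(x-y)$ of the open torus. A self-map of $\BP^1$ equivariant for a homomorphism $\rho$ of the one-dimensional tori is constant if $\rho$ is trivial. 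Otherwise $\rho$ is surjective and the map sends torus-fixed points to torus-fixed points. So this morphism is not toric for any $\rho$, and your fallback of twisting by a torus automorphism fails as well.

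So the equivariance part of the statement cannot be proved at the generality of Definition~\ref{def:cat_con}~(7). You have two options.
\begin{itemize}
\item Drop steps (1)--(3) and read the conclusion as ``morphism of schemes between simplicial toric prevarieties''. This is all that the paper's gluing argument, and yours, actually establishes.
\item Add a hypothesis, e.g.\ that the $\phi_g$ are induced by a morphism of multigraded rings $\varphi\colon R\to S$ sending each variable to a nonzero scalar multiple of a monomial.
\end{itemize}

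Under that hypothesis your steps (2) and (3) do go through, with $A$ the exponent matrix of $\varphi$:
\begin{itemize}
\item $A$ satisfies $\gamma_S\circ A=\alpha_\varphi\circ\gamma_R$, hence restricts to a map $M_R\to M_S$.
\item $A$ maps $\sigma_g^\vee\cap M_R$ into $\sigma_{f_g}^\vee\cap M_S$. Negative exponents occur only at divisors of $g$, whose images divide $\varphi(g)\mid f_g$, and all other variables go to honest monomials.
\item The dual map $F\colon N_S\to N_R$ satisfies $F_\BR(\sigma_{f_g})\subseteq\sigma_g$, which is exactly the input needed for a toric morphism.
\end{itemize}
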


\begin{proof}
The first statement directly follows from Theorem~\ref{thm:ring_to_prev}.
    Let $\Phi\colon (R, B_R) \dashrightarrow (S, S_+)$ be a rational map of conical rings and let $B_S := \{f \in \Gen^{D_S}(S) \mid \exists g \in B_R: \varphi(g) \mid f\}$. Then the local maps $R_{(g)} \to S_{(f)} = S_{(\varphi(g))}$ give rise to a collection of morphisms $\Spec(S_{(f)}) \to \Spec(R_{(g)})$ for all $f \in B_S$ and hence to a morphism $\Phi^\ast\colon\Proj^{D_S}_{B_S}(S) \to \Proj^{D_R}_{B_R}(R)$ of toric prevarieties.
\end{proof}

Conversely, let $\CS \subseteq N_\BR$ be a simplicial system of fans 
and let $\CA = (\Delta_{ii}, \Delta_{ij})$ denote the corresponding affine system. Let 
\begin{align*}
    \Omega_{(1)} \deq \{(\rho, i) \mid \rho \in \Delta_{ii}(1)\}
\end{align*}
be the set of one-dimensional cones of $\CS$. We want a ray $\rho$ to occur $k$-times if and only if the cone $\Delta_{ii}$ occurs $k$-times. 
The collection of one-dimensional cones (with extra occurrences) will be denoted by $\CS(1)$. Let $n = |\CS(1)|$ be the number of variables of $S$, $r = \rank(D)$ and hence $\dim(N_\BR) = n-r$. 

Now let $\CB$ be the matrix where the rows are given by the ray generators of the one-dimensional cones in $\CA$, i.e. $\CB \in \BZ^{n \times (n-r)}$, viewed as a map
\begin{align*}
    \varphi\colon \BZ^{n} \to \BZ^{n-r},\ x \mapsto x^T \CB \in \BC^{1\times (n-r}) 
\end{align*}
and
\begin{align*}
    S \deq \BC[x_\rho \mid \rho \in \CS(1)].
\end{align*}
We want to construct a matrix $\CD$ such that $\CD \CB= 0$.
Clearly, we just have to take all the vectors lying in the kernel of $\varphi$.
Since $\varphi$ has rank $n-r$, its kernel is free of rank $r$, giving us $\CD \in \BZ^{r \times n}$.

\begin{remark}[Cox Ring]\label{rem:cox_ring}
    In many cases, the ring $S$ coincides with the so-called \emph{Cox} ring of the toric prevariety associated to the simplicial system of fans $\CS$, which is graded by the finitely generated abelian group $\Cl(X)$ (cf.\  \cite{Cox}, §1). However, it need not be the case. Taking $S = \BC[x,y,z, w]$ from Example~\ref{ex:sys_fans}, we see that $S' = \BC[x,y,z,xw,yw]\subseteq S$ satisfies $\Proj^{D_S}(S') = \Bl_p(\BP^2)$. But most importantly, $S'$ is \textbf{not} a Cox ring of $\Bl_p(\BP^2)$ in the sense of \cite{HK}, Definition 2.6.
\end{remark}

Since we have to show the existence of such a matrix $\CD$, we use the Cox ring proof from \cite{CLS} to prove it.

\begin{proposition}\label{prop:cox_dual}
    The above construction coincides with the Cox construction (cf.\ \cite{CLS}, Theorem 4.1.3). In particular, the class group of $X_\CS$ is isomorphic to the kernel of 
    \begin{align*}
        \varphi_\CB\colon \BZ^{r \times n} \to \BZ^{r \times (n-r)}, \ \CD \mapsto \CD \cdot \CB.
    \end{align*}
\end{proposition}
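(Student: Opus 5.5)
We follow the proof of \cite{CLS}, Theorem 4.1.3, with the fan replaced by the simplicial system of fans $\CS$. The point is that $X_\CS$ is glued from the affine toric varieties $X_i$ of the irreducible fans $\Delta_{ii}$ of the affine system $\CA$. All of these are open in $X_\CS$ and share the same torus $T_N = \Spec(\BC[M])$ with $M = \Hom(N,\BZ)\cong \BZ^{n-r}$. So the standard ingredients are still available. The torus-invariant prime divisors of $X_\CS$ are exactly the closures $D_\rho$ of the orbits of the rays $\rho \in \CS(1)$. Every character $\chi^m$, $m\in M$, is a rational function on $X_\CS$, with divisor
\begin{align*}
\operatorname{div}(\chi^m) \equ \sum_{\rho\in\CS(1)} \langle m, u_\rho\rangle D_\rho ,
\end{align*}
since the order of vanishing along $D_\rho$ can be computed in any single chart $X_i$ containing $\rho$. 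This gives the sequence
\begin{align*}
M \xrightarrow{\ m\mapsto (\langle m,u_\rho\rangle)_\rho\ } \BZ^{\CS(1)} \longrightarrow \Cl(X_\CS) \longrightarrow 0 .
\end{align*}
The first map is, up to transposition, the matrix $\CB$ whose rows are the ray generators.

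The steps are as follows.
\begin{enumerate}[label=(\arabic*)]
\item Show surjectivity of $\BZ^{\CS(1)}\to\Cl(X_\CS)$. The complement $X_\CS\setminus T_N$ is the union of the $D_\rho$, and $\Cl(T_N)=0$ because $\BC[M]$ is a UFD. The excision sequence for class groups needs only normality, so it holds for prevarieties as well.
\item Show exactness in the middle. A rational function $h$ whose divisor is supported off $T_N$ is a unit on $T_N$, so $h=c\chi^m$.
\item Show that the first map is injective, which requires the $u_\rho$ to span $N_\BR$. This holds as soon as some maximal cone is full-dimensional, which is the case for the affine systems coming from Theorem~\ref{thm:ring_to_prev}. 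Alternatively, we assume it as part of the hypothesis, just as in \cite{CLS}.
\end{enumerate}
With this, $\Cl(X_\CS)\cong \coker(\CB^T\colon \BZ^{n-r}\to\BZ^n)$.

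To identify this cokernel with the grading constructed above, apply $\Hom(-,\BZ)$. The row vectors $\CD$ satisfying $\CD\CB=0$ are the integral vectors in $\ker(\varphi)$, where $\varphi\colon x\mapsto x^T\CB$. So $\ker(\varphi_\CB)$ is the dual of the free part of the cokernel, and the degree map $\gamma\colon \BZ^n\to D$, $e_i\mapsto \deg(x_{\rho_i})$, is exactly the quotient map of the Cox construction. This also shows that the grading of $S=\BC[x_\rho\mid\rho\in\CS(1)]$ agrees with the $\Cl(X_\CS)$-grading of \cite{Cox}, with $M_S=\ker(\gamma)=\im(\CB^T)$. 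That recovers the lattice $N$, consistent with Remark~\ref{rem:cones_connect}.

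The main obstacle is torsion. Matrices $\CD$ with $\CD\CB=0$ only see $\Hom(\Cl(X_\CS),\BZ)$, whereas $\Cl(X_\CS)$ may contain torsion when the $u_\rho$ span a proper sublattice of $N$ (compare Example~\ref{ex:max_cones}(2)). I would handle this by interpreting the kernel statement up to the torsion part, which is recovered as $\operatorname{Ext}^1(\coker \CB^T,\BZ)$. Alternatively, one can define $D$ directly as $\coker(\CB^T)$, so that $\CD$ represents the free part of $\gamma$ and the torsion is carried by $\BZ^n/(\im\CB^T)$. A second subtlety is the repeated rays in $\CS(1)$. For these I would check that each repeated occurrence is a genuinely distinct prime divisor in the non-separated gluing, so that the count $n=|\CS(1)|$ is correct. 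This holds because two charts are glued only along the common faces recorded in $\Delta_{ij}$.
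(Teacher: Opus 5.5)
Your argument is correct, but it is organized differently from the paper's. The paper takes $\Proj^D(S)=X_\CS$ as given ``by construction'' and reads $\Cl(X_\CS)$ off as the grading group attached to $\CD$, appealing to \cite{CLS}, Theorem 4.1.3 only ``dually''. You instead compute $\Cl(X_\CS)\cong\coker(\CB^T)$ directly on the glued charts and only then dualize. That makes two things actual computations that the paper takes for granted: the comparison with $\CD$, and the recovery of the lattice via $M_S=\im(\CB^T)$.

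This is also what brings your torsion caveat to light. The caveat is a defect of the statement, and the paper's proof passes over it; it is not a defect of your proof. The kernel of $x\mapsto x^T\CB$ is $\Hom(\Cl(X_\CS),\BZ)$, which loses torsion. Read literally on $\BZ^{r\times n}$, $\ker\varphi_\CB$ is even free of rank $r^2$. So the claimed isomorphism already fails in Example~\ref{ex:max_cones}(2), where $D=\BZ\times\BZ/2\BZ$.

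Your repair $D:=\coker(\CB^T)$ is the right one, and you also need it for the equality $M_S=\im(\CB^T)$ you assert. With the free grading given by $\CD$ one gets $M_S=\ker(\CD)$, which is the saturation of $\im(\CB^T)$. Then $N_S$ is only the sublattice spanned by the $u_\rho$, and the resulting toric prevariety is a finite cover of $X_\CS$. In that example it is $\BP^1\times\BA^1$, double covering $X_\CS$.

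One small correction: index the torus-invariant prime divisors by the one-dimensional classes $[\rho,i]\in\Omega(\CS)$, rather than claiming every repeated occurrence is a new divisor. The labelled rays $(\rho,i)$ and $(\rho,j)$ give the same divisor exactly when $\rho\in\Delta_{ij}$. That is what your gluing remark actually shows, and $n$ must count these classes.
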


\begin{proof}
By construction, $S$ is graded by $\CD$ and $\Proj^D(S) = X_\CS$ is a toric prevariety. Hence, the class group of $X_\CS$ is isomorphic to the group $D$ corresponding to $\CD$. 
Here we are using arguments dual to those in \cite{CLS}, Theorem 4.1.3, where the class group is described as a cokernel rather than a kernel.
By abuse of notation, we identify $G$ with its $S_0$-points  $G(S_0) = \Hom_\BZ(D, S_0^\times)$, as it is common in toric geometry. The claim follows.
\end{proof}

Thus $S$ is graded by $\CD$, where the degree of $x_{\rho_i}$ is given by the $i$-th column of $\CD$ (therefore the notation).
In other words, given a system of fans, we can compute an associated conical ring by choosing one variable for each one-dimensional cone (see above) and taking the grading such that the linear dependencies between the variables correspond to the generators of $M$. In particular, the subset of the irrelevant ideal belonging to $S$ is given by the collection of the relevant elements corresponding to the maximal cones of $\CS$:
The irrelevant ideal is given by Cox construction applied to the system of fans $\CS$, i.e.\ we take the relevant elements corresponding to the affine system $\CA$ of $\CS$  (i.e.\ the elements corresponding to the maximal cones of $\CS$) and denote it by $B_\CA$ (cf.\ Remark~\ref{rem:cox_ideal}).

We want to emphasize that $\Proj^D(S)$ really only captures the simplicial part of toric (pre)varieties. This already follows from \cite{paper1}, Proposition 2.13 (1) together with \cite{Cox}, Theorem 2.1.

\begin{example}\label{ex:simplicial_needed}
    Let $N = \BZ^3$ and consider the ray generators $v_1 = (1,0,1)$, $v_2 = (0, 1, 1)$, $v_3 = (-1, 0, 1)$ and $v_4 = (0, -1, 1)$ of the cone $\sigma$. 
    Clearly, $U_\sigma$ is not a simplicial toric variety. 
    The Gale dual is given by
    \begin{align*}
       A \deq \begin{pmatrix}
            1 & 0 & -1 & 0 \\ 0 & 1 & 0 & -1 \\ 1 & 1 & 1 & 1
        \end{pmatrix}.
    \end{align*}
    We want to compute generators of $D:=\Cl(U_\sigma)$, so that we can write the ray generators as linear combinations over that base.
    The isomorphism class of $D = \coker(A^T)$ (cf.\ Proposition~\ref{prop:cox_dual}) is determined by the Smith normal form (SNF), which is computed via greatest common divisors of the determinants of all square minors of $A$.  
    The SNF invariant factors of $A^T$ are given by $(1, 1, 2)$, so that $D = \BZ \oplus \BZ/2\BZ$. 
    Hence, we need to find a generator $r$ for the free part and a generator $t$ for the torsion part in $D$.    
    The element $r = (-1, 1, -1, 1)$ satisfies $A r = 0$ and $mr \not\in \im(A^T)$ for all integers $m \neq 0$ (the equation $A^T(a,b,c) = m (-1,1,-1,1)$ yields $m  = 0$).
    Regarding $t$, it holds $t \not \in \im(A^T)$, and $A^T(-1, -1, -1) = (-2, -2, 0, 0) = 2t$, displaying the torsion (i.e.\ $2t = 0 \in \BZ^4/\im(A^T)$). 
    
    Now we can decompose the ray generators as linear combinations over $r$ and $t$, and compute $S = \BC[x_1, \ldots, x_4]$ with grading $\deg(x_1) = \deg(x_3) = (-1, \overline{1})$ and $\deg(x_2) = \deg(x_4) = (1, \overline{0})$. Hence $S_+ = (x_1, \ldots, x_4)$ and $S_0 = \BC[x_1^2 x_2^2, x_1^2 x_4^2, x_3^2 x_2^2, x_3^2 x_4^2, x_1x_2x_3x_4]$. One quickly checks that the cone corresponding to $\Spec(S_{(x_i)})$ is precisely given by omitting the ray $v_i$, so that $\Spec(S_{(x_i)})$ is a simplicial toric variety. 
    Computing the irrelevant ideal in $\Cox(U_\sigma)$ associated with $\sigma$ using the definition in \cite{Cox}, it must be generated by $1$, as $\sigma$ is a single cone containing all rays. Hence, the affine toric variety $U_\sigma$ is realized as the categorical quotient $\Spec(S)/ \Spec(S_0[D])$ by \cite{Cox}, Theorem 2.1 (ii). On the other hand, $\Proj^D(S)$ is given by the geometric quotient $D(S_+) // \Spec(S_0[D])$, where $D(S_+) = \Spec(S) \setminus\{(0)\}$ removes the origin. In particular, $\Proj^D(S) \subseteq U_\sigma$ can be viewed as the maximal simplicial open subset of $X$. 
\end{example}

The next example illustrates how to `read off' the grading from the given system of fans.

\begin{example} 
Consider the toric prevariety $X$ given in \cite{ANH}, Example 2.4, where $X$ is a $\BP^1$ with origin and infinity doubled. In particular, $M_S = [(1,1,-1,-1)]$.
 In this case we can see that $S$ has $4$ variables, say $x, y, z, w$, such that $\deg(xy) = \deg(zw)$ (since $1+1+(-1)+(-1) = 0$). So every choice of $4$ vectors in $\BR^3$ such that no $3$ of them are linear dependent, will give a ring that induces the system we started with. Thus $\deg(x) = (1, 0, 0)$, $\deg(y) = (0, 1, 1)$, $\deg(z) = (0, 0, 1)$ and $\deg(w) = (1, 1, 0)$ is a ring of this type. If we apply the above construction, however, we get
        $\CD = \begin{pmatrix}
            1 & 0 & 1 & 0 \\
            1 & 0 & 0 & 1 \\
            0 & 1 & 1 & 0 \\
            0 & 1 & 0 & 1
        \end{pmatrix}$. As $\CD$ has rank $3$, we can see that $\CD$ is equivalent to $\CD'$, where $\CD'$ is constructed from $\CD$ by forgetting a row of $\CD$. If we choose to drop the second row, we see that $\CD'$ coincides with the above grading.
\end{example}

Note that the choices of $n$ and $r$ in Proposition~\ref{prop:cox_dual} are minimal, but we can always change to $n+1$ and $r+1$ and still find a model. 

\begin{example}\label{ex:sys_fan_P^2}
    Let $N = \BZ^2$ and $\CS = \Delta$ be the fan of $\BP^2$, where the one dimensional cones are given by $\Cone(e_1)$, $\Cone(e_2)$ and $\Cone(-e_1-e_2)$. We will denote the corresponding variables by $x, y$, and $z$. Thus we get $\CB = \begin{pmatrix}
        1 & 0 \\ 0 & 1 \\ -1 & -1
    \end{pmatrix}$. But then, $A$ has to have $3$ rows and $1$ column, hence $\CD = (1, 1, 1)$. In this case, we just get $\BP^2$ with the classical Proj construction (i.e.\ $R = \BC[x, y, z]$ where $\deg(x)=\deg(y)=\deg(z) = 1$). However, if we add the ray $\Cone(e_1 + e_2)$ and denote the corresponding variable with $w$, we get
    \begin{align*}
        \CB \equ \begin{pmatrix}
        1 & 0 \\ 0 & 1 \\ -1 & -1 \\ 1 & 1
    \end{pmatrix} \ \text{ and } \  \CD \equ \begin{pmatrix}
     1&1&1&0    \\ 0&0&1&1
    \end{pmatrix},
    \end{align*}
    that is, a grading by $\BZ^2$. Then $S = \BC[x, y, z, w]$ and $S_+ = \langle xw, yw, zw, xz, yz \rangle$, where clearly $xz$ and $yz$ are the relevant elements belonging to the new cones $\sigma_{xz}$ and $\sigma_{yz}$ (cf.\ Example~\ref{ex:sys_fans}) arising from the extra ray $w$. In particular, another conical ring associated to $\CS$ is $(S, B)$ for $B = \langle xw, yw, zw \rangle$. Let $\varphi\colon R \to S, x \mapsto xw, y \mapsto yw$ and $z \mapsto zw$. Then $\Phi\colon (R, R_+) \to (\varphi(R), B)$ is an isomorphism of conical rings. 

    Next, we want to get $\BP^2$ from a ring graded by $\BZ^3$. For this, we just add another new ray. First we try $v = \Cone(e_1-e_2)$. It holds
        \begin{align*}
        \CB \equ \begin{pmatrix}
        1 & 0 \\ 0 & 1 \\ -1 & -1 \\ 1 & 1 \\ 1 & -1
    \end{pmatrix} \ \text{ and } \  \CD \equ \begin{pmatrix}
       1&1&1&0  & 0  \\ 0&0&1&1 & 0 \\ -1 & 1 & 1 & 1 &1
    \end{pmatrix}.
    \end{align*}
    Thus let $T = \BC[x, y, z, w, v]$ graded by $\CD$. Then $\BP^2$ is given by the relevant elements $xwv, ywv, zwv \in T_+$. In particular, there are isomorphisms of conical rings $(R, R_+) \to (\varphi(R), B) \to (\psi(S), B')$, where $B ' = \langle xwv, ywv, zwv \rangle$ and $\psi$ maps $xw, yw, zw$ to $xwv, ywv, zwv$. 
    
    We can also find a $\BZ^3$ grading where no negative numbers occur:
    Let $v' = \Cone(-e_2)$, i.e.
\begin{align*}
        \CB \equ \begin{pmatrix}
        1 & 0 \\ 0 & 1 \\ -1 & -1 \\ 1 & 1 \\ 0 & -1
    \end{pmatrix} \ \text{ and } \  \CD \equ \begin{pmatrix}
       1&1&1&0  & 0  \\ 0&0&1&1 & 0 \\ 0 & 1 & 0 & 0 & 1
    \end{pmatrix}.
    \end{align*}
    As before, $\BP^2$ is then given by the relevant elements $xwv', ywv'$, and $zwv'$, and thus there are again isomorphisms of conical rings between the subrings giving $\BP^2$.
\end{example}

\subsection{Maps of systems of fans}

For a system of fans $\CS$, we set
\begin{align*}
    \CF(\CS) \deq \{(\sigma, i) \mid i \in I, \sigma \in \Delta_{ii}\}
\end{align*}
and define an equivalence relation, called \emph{gluing relation}, of labelled cones
\begin{align*}
    (\sigma, i) \ \sim \ (\sigma, j) \ \ \iff \ \ \sigma \in \Delta_{ij}.
\end{align*}
We denote the set of equivalence classes by $\Omega = \Omega(\CS)$ and the class of $(\sigma, i) \in \CF(\CS)$ will be denoted by $[\sigma, i]$.

\begin{remark}
    Note that by Remark 2.7 \cite{ANH} the assignment $[\sigma, i] \to G \cdot x_{[\sigma, i]}$ defines a bijection between $\Omega(\CS)$ and the set of $G$-orbits of $X_\CS$, where $X_i \ni x_{(\sigma, i)} \sim x_{(\tau, j)} \in X_j$ if and only if $(\sigma, i) \sim (\tau, j)$.
    Also note that for distinguished points
    \begin{align}\label{eq:orbit_closed_ANH}
        x_{[\sigma, i]} \in \overline{G \cdot x_{[\tau, j]}} \text{  if and only if  } [\tau, j] \preceq [\sigma, i],
    \end{align}
     where $\preceq$ is a partial ordering on $\Omega(\CS)$, defined by 
    \begin{align*}
        [\tau, j]\preceq [\sigma, i] \ \iff \ \tau \preceq \sigma \text{ is a face and } [\tau, i] = [\tau, j].
    \end{align*}
    We denote the open affine $G$-stable subset of $\Proj^D(S)$ containing $G \cdot x_{[\sigma, i]}$ by $X_{[\sigma, i]}$. It is immediate, that $X_{[\sigma, i]}$ coincides with $\Spec(S_{(f)})$, where $f$ is the relevant element corresponding to the maximal cone $(\sigma_, i)$.
\end{remark}

\begin{definition}\label{def:map_sys_fans}
    Let $\CS = (\Delta_{ij})_{i, j \in I}$ and $\CS' = (\Delta'_{ij})_{i, j \in I'}$ be systems of fans in lattices $N$ and $N'$ repectively. A \emph{map of systems of fans} from $\CS$ to $\CS'$ is a pair $(F, \Ff)$, where $F \colon N \to N'$ is a lattice homomorphism and $\Ff \colon \Omega(\CS) \to \Omega(\CS')$ is a map with the following properties:
    \begin{enumerate}[label=(\roman*)]
        \item If $[\tau, j] \preceq [\sigma, i]$, then $\Ff([\tau, j]) \preceq f([\sigma, i])$, i.e.\ $\Ff$ is order preserving.
        \item If $\Ff([\sigma, i]) = [\sigma', i']$, then $F_\BR(\sigma^\circ) \subset (\sigma')^\circ$.
    \end{enumerate}
\end{definition}

\begin{remark}
    If $\CS'$ is a single fan such that $F_\BR$ maps the cones of $\CS$ to the cones of $\Delta'$, there is a unique map $\Ff$ such that $(F, \Ff)$ is a map of systems of fans.    

If conversely $\CS$ is a single fan and $F_\BR$ satisfies the same conditions as before, then there need not exist a map $(F, \Ff)$ of systems of fans (cf.\ \cite{ANH}, Example 3.3).
\end{remark}

Note that the Orbit-Cone-Correspondence (\cite{CLS}, Theorem 3.2.6) remains true for toric prevarieties. Thus, we can use distinguished points to describe maps of systems of fans.
The following Lemma gives the image of a distinguished point under a toric morphism in terms of the map $\Ff$ of systems of fans.

\begin{lemma}\label{lem:map_on_dinsting_point}
    Let $(F, \Ff)$, where $\Ff\colon \Omega(\CS) \to \Omega(\CS')$, be a map of systems of fans with associated toric morphism $f\colon X_\CS \to X_{\CS'}$. Then for every $[\sigma, i]\in \Omega(\CS)$ it holds
    \begin{align*}
        f(x_{[\sigma, i]}) \equ x'_{\Ff[\sigma, i]} .
    \end{align*}
\end{lemma}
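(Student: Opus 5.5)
We reduce the statement to the affine case and then apply the classical orbit–cone computation for toric morphisms (cf.\ \cite{CLS}, Lemma 3.3.21), since the construction of the toric morphism attached to $(F,\Ff)$ is local. Concretely, $f\colon X_\CS\to X_{\CS'}$ is built in \cite{ANH} by gluing: for each $[\sigma,i]\in\Omega(\CS)$ with $\Ff([\sigma,i])=[\sigma',i']$, condition (ii) of Definition~\ref{def:map_sys_fans} gives $F_\BR(\sigma^\circ)\subseteq(\sigma')^\circ$, hence $F_\BR(\sigma)\subseteq\sigma'$. So $F$ induces a toric morphism of affine toric varieties $U_\sigma\to U_{\sigma'}$, where $U_\sigma\subseteq X_i$ and $U_{\sigma'}\subseteq X'_{i'}$ are the open charts. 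Condition (i), together with the compatibility of the gluing relations, ensures that these local maps agree on overlaps and glue to $f$. The first step is therefore to identify $f|_{U_\sigma}$ with the toric morphism $\phi_F\colon U_\sigma\to U_{\sigma'}$ coming from the dual map $F^\vee\colon M'\to M$, i.e.\ $\chi^{m'}\mapsto\chi^{F^\vee(m')}$ on semigroup algebras.

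Next we describe the distinguished point. By definition, $x_{[\sigma,i]}\in U_\sigma\subseteq X_i$ is the semigroup homomorphism $\sigma^\vee\cap M\to\BC$ with $\chi^m\mapsto 1$ if $m\in\sigma^\perp$ and $\chi^m\mapsto 0$ otherwise. Its image $\phi_F(x_{[\sigma,i]})$ is the point
\begin{align*}
  m' \longmapsto \begin{cases} 1, & F^\vee(m')\in\sigma^\perp,\\ 0, & \text{otherwise},\end{cases} \qquad m'\in(\sigma')^\vee\cap M'.
\end{align*}
We must show that this equals $x'_{[\sigma',i']}$, i.e.\ that for $m'\in(\sigma')^\vee\cap M'$ we have $F^\vee(m')\in\sigma^\perp$ if and only if $m'\in(\sigma')^\perp$.

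This equivalence is the key step, and it is where the \emph{interior} condition (ii) is used rather than mere containment. Pick $u\in\sigma^\circ$. Then $\langle F^\vee(m'),u\rangle=\langle m',F(u)\rangle$ with $F(u)\in(\sigma')^\circ$. If $m'\in(\sigma')^\vee$, then $m'$ vanishes at the interior point $F(u)$ if and only if $m'\in(\sigma')^\perp$. Likewise, $F^\vee(m')\in\sigma^\vee$ vanishes at $u\in\sigma^\circ$ if and only if $F^\vee(m')\in\sigma^\perp$. Combining the two gives the equivalence.

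Finally, to pass from the chart to $X_\CS$, we have to check that the result does not depend on the representative $(\sigma,i)$ of the class. If $(\sigma,i)\sim(\sigma,j)$, then $x_{(\sigma,i)}$ and $x_{(\sigma,j)}$ are identified under the gluing, and $\Ff$ is defined on classes. The identity is therefore well defined on $\Omega(\CS)$, and $f(x_{[\sigma,i]})=x'_{\Ff[\sigma,i]}$ holds in $X_{\CS'}$.

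The main obstacle is the bookkeeping in the first step: making precise that the morphism of \cite{ANH} restricted to the chart of $[\sigma,i]$ really lands in the chart of $\Ff[\sigma,i]$. For this we would use (i): faces of $[\sigma,i]$ map to faces of $\Ff[\sigma,i]$, so the whole of $U_\sigma$ maps into $U_{\sigma'}$, consistent with the orbit-closure relation \eqref{eq:orbit_closed_ANH}.
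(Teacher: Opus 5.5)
Your proof is correct, but it does not follow the paper: the paper gives no argument of its own and simply cites \cite{ANH}, Lemma 3.4.

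You reconstruct the argument instead. Taking $f$ to be the gluing of the chart maps $U_\sigma\to U'_{\sigma'}$ induced by $F$, you reduce to a computation on semigroup algebras. The decisive step is that for $m'\in(\sigma')^\vee\cap M'$ one has $F^\vee(m')\in\sigma^\perp$ if and only if $m'\in(\sigma')^\perp$, which you obtain by evaluating at $u\in\sigma^\circ$ and at $F(u)\in(\sigma')^\circ$.

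What this buys over the citation is transparency. It shows why condition (ii) must be phrased with relative interiors: mere containment $F_\BR(\sigma)\subseteq\sigma'$ would only send $x_{[\sigma,i]}$ to the distinguished point of the smallest face of $\sigma'$ containing $F_\BR(\sigma)$. It also shows that condition (i) is used only to make the chart maps agree on overlaps.

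One thing to tidy up in your last paragraph concerns the inclusion $f(U_\sigma)\subseteq U'_{\sigma'}$. Take it straight from the gluing construction, where it holds by definition. Deducing it from (i) via ``faces of $[\sigma,i]$ map to faces of $\Ff[\sigma,i]$'' presupposes the lemma for the faces $[\tau,j]\preceq[\sigma,i]$, so it is circular. Nor can this chart-wise description be replaced by an argument on the torus alone: $X_{\CS'}$ may be non-separated, and then $f$ is not determined by its restriction to the torus.
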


\begin{proof}
    \cite{ANH}, Lemma 3.4.
\end{proof}

Summing up the previous results, we can state the following theorem. Recall that conical rings with rational morphisms of conical rings form a category RatConring (see Definition\ref{def:cat_con}). 

\begin{theorem}\label{thm:functor_conical_rings_toric_prev}
Let $S$ be a noetherian factorially graded polynomial ring over an algebraically closed field.
    The assignments 
    \begin{align*}
        (S, B) &\mapsto \CS_B \\
        (\Phi\colon (R, B_R) \dashrightarrow (S, S_+)) &\mapsto (\bProj_{\text{rat}}(\Phi)\colon\Proj^{D_S}_{B_S}(S) \to \Proj^{D_R}_{B_R}(R))\\
        \text{ and } \\
    \CS &\mapsto (\CD, B_\CA), \\
    (\Phi^\ast\colon \Proj^{D_S}_{B_S}(S) \to \Proj^{D_R}_{B_R}(R)) &\mapsto (\Ff,(\phi_f)_{f \in B_S}) 
\end{align*}
 are inverse to each other, where $\Ff\colon \Rel^{D_S}_{B_S}(S) \to \Rel^{D_R}_{B_R}(R)$ and $\phi_f \colon R_{(\Ff(f))} \to S_{(f)}$.
    Thus, the category $\mathrm{ToricPrev_s}$ of simplicial toric prevarieties (with morphisms of toric prevarieties as morphisms) is anti-equivalent to the category $\mathrm{RatConRing}$.
    In particular, multigraded noetherian polynomial rings serve as coordinate rings for toric prevarieties.
\end{theorem}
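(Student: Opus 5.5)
The plan is to build the two functors and then check that both composites are naturally isomorphic to the identities: first on objects, then on morphisms.

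\emph{Ring to fans to ring.} For the direction $(S,B)\mapsto \CS_B$ I would use Theorem~\ref{thm:ring_to_prev} together with Corollary~\ref{cor:functor_special_setting}. They say that $\Proj^D_B(S)=X_{\CS_B}$ is a simplicial toric prevariety and that $\bProj_{\text{rat}}$ sends rational maps of conical rings to toric morphisms. The affine system of $\CS_B$ is indexed by $\Gen^D_B(S)$, with maximal cones $\sigma_f=\sigma_{I-J_f}$. Its rays are the images of the projections $\pr_i|_M$, one for each variable $T_i$. Applying the construction of Proposition~\ref{prop:cox_dual} then returns:
\begin{itemize}
\item a polynomial ring with one variable per ray;
\item a grading matrix $\CD$ with $\CD\CB=0$, whose kernel lattice is $M_S$ again;
\item the ideal $B_\CA$ generated by the monomials $\prod_{i\notin I-J_f}T_i=f$.
\end{itemize}
By the remark following Example~\ref{ex:max_cones} (part (a)), $\Proj$ depends only on $\ker\gamma$. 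So $(S,B)$ and $(\CD,B_\CA)$ are isomorphic as conical rings, after a regrading and, if needed, after passing to the subring of Lemma~\ref{lem:subring_localization}.

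\emph{Fans to ring to fans.} Starting from a simplicial system $\CS$, Proposition~\ref{prop:cox_dual} gives $S$ graded by $\CD$. Remark~\ref{rem:cone_compute} shows that the cone attached to the relevant element of a maximal cone of $\CA$ is that cone itself. Hence $\CS_{B_\CA}$ is the affine system $\CA$ of $\CS$, which defines the same prevariety $X_\CS$.

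\emph{Morphisms.} Next I would show that the morphism assignments are mutually inverse. Take a toric morphism $\Phi^\ast\colon\Proj^{D_S}_{B_S}(S)\to\Proj^{D_R}_{B_R}(R)$. It is given by a map of systems of fans $(F,\Ff)$, and Lemma~\ref{lem:map_on_dinsting_point} says it sends distinguished points to distinguished points. Since each $X_{[\sigma,i]}$ equals $\Spec(S_{(f)})$, property (ii) of Definition~\ref{def:map_sys_fans} yields, for every $f\in\Gen_{B_S}$, a relevant $g=\Ff(f)\in B_R$ with $\Phi^\ast(\Spec S_{(f)})\subseteq\Spec R_{(g)}$. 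Restricting $\Phi^\ast$ to these charts gives ring maps $\phi_f\colon R_{(g)}\to S_{(f)}$, and these are compatible on overlaps because $\Phi^\ast$ is a global morphism. This is exactly the data of a rational map of conical rings (Definition~\ref{def:cat_con}(7)), with the choice of $f_g$ fixed as in Remark~\ref{rem:choice_fg}. Conversely, gluing the local maps recovers $\Phi^\ast$ via Proposition~\ref{prop:Proj_B^D}, so the two assignments are inverse on hom-sets. Functoriality of both sides follows from the composition in Remark~\ref{rem:choice_fg} and from gluing. Together these give the anti-equivalence.

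\emph{Main obstacle.} The delicate point is condition (i) of Definition~\ref{def:cat_con}(7), because the map runs $B_R\to h(S)$ while the geometry runs $\Ff$ from $S$-charts to $R$-charts. For each relevant $g$ I must produce $\varphi(g)$ dividing some $f$ with $S_{(f)}=S_{(\varphi(g))}$. I would attempt this by pulling back the torus-invariant divisor $V(g)$ through the toric morphism. In coordinates, this means using $F$ to express the characters on $\sigma_g^\vee\cap M_R$ as monomials in $S$. If that fails in general, I would work up to the non-uniqueness allowed in the remark following Definition~\ref{def:rel_proj} (parts (3) and (6)). In that case "inverse" is understood up to isomorphism of conical rings, via Lemma~\ref{lem:subring_localization}.
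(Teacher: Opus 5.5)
\textbf{Verdict: the proposal has a genuine gap.} Your route is the paper's. You handle objects via Theorem~\ref{thm:ring_to_prev} and Proposition~\ref{prop:cox_dual} in both directions, and morphisms via the map of systems of fans $(F,\Ff)$ and restriction to the charts $\Spec(S_{(f)})$. The gap is the step you call the main obstacle. It is not a technicality: it is the claimed bijection on hom-sets, which fails for Definition~\ref{def:cat_con}(7) as written.

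A toric morphism $\Phi^\ast$ canonically gives data indexed by $f\in\Gen^{D_S}_{B_S}(S)$: a chart $\Spec(R_{(g)})\supseteq\Phi^\ast(\Spec(S_{(f)}))$ and a map $R_{(g)}\to S_{(f)}$. A rational map is instead indexed by all relevant $g\in B_R$, with a set map $\varphi\colon B_R\to h(S)$ and one chart $f_g$ per $g$. You can always manufacture such data. Take $\varphi(g)=f_g$ to be any relevant element of $B_S$ whose chart lands in $\Spec(R_{(g)})$. When several charts lie over one chart, use the distinct elements $g,g^2,\dots$; this happens for the two charts of $\Bl_p\BP^2$ over the chart of $\BP^2$ containing $p$. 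But nothing in $\Phi^\ast$ determines these choices. Consequences:
\begin{itemize}
\item ``Gluing recovers $\Phi^\ast$'' proves at most one composite. Even that holds only after you arrange that the chosen charts cover $\Proj^{D_S}_{B_S}(S)$, since by Proposition~\ref{prop:Proj_B^D} $\bProj_{\text{rat}}(\Phi)$ lives only on $\bigcup_{f\in\varphi(B_R)}\Spec(S_{(f)})$.
\item The other composite is not the identity. On $\BP^1=\Proj(\BC[a,b])$, take the identity and the rational map that differs from it only by $\varphi(a)=ab$, with $\phi_a\colon R_{(a)}\to S_{(ab)}$ the restriction. These are distinct, yet they glue to the same $\mathrm{id}_{\BP^1}$.
\item Corollary~\ref{cor:functor_special_setting}, which you invoke, does not land in toric morphisms. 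The graded automorphism $a\mapsto a+b$, $b\mapsto b$ of $\BC[a,b]$ is a legitimate rational map, since $a+b$ is relevant. It glues to the non-toric automorphism $[a:b]\mapsto[a+b:b]$ of $\BP^1$.
\end{itemize}

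Neither of your fallbacks repairs this.
\begin{itemize}
\item Pulling back $V(g)$ cannot define $\varphi(g)$ canonically in the simplicial setting, because pullbacks of torus-invariant divisors are only $\BQ$-divisors. For $\BA^1\to U_{\sigma'}$ with $\sigma'=\Cone(e_2,2e_1-e_2)$ and $F(1)=e_1$, the divisor of the ray $\Cone(e_2)$ pulls back to $\tfrac12\cdot\{0\}$, i.e.\ to ``$t^{1/2}$''. Your computation with characters on $\sigma_g^\vee\cap M_R$ yields the degree-zero maps $\phi_g$, not $\varphi(g)$.
\item Working ``up to isomorphism of conical rings'' addresses ambiguity in objects, not non-uniqueness of morphisms between two fixed objects.
\end{itemize}
What is missing is a change of the hom-sets of $\mathrm{RatConRing}$:
\begin{itemize}
\item keep only rational maps with monomial charts and local maps induced by lattice maps;
\item identify rational maps that glue to the same morphism on all of $\Proj^{D_S}_{B_S}(S)$;
\item fix $B_S$ in advance, instead of manufacturing it from $\Phi$ as in Corollary~\ref{cor:functor_special_setting}.
\end{itemize}
With such a modification your chart-by-chart argument essentially gives the bijection. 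One correction is needed: take $g$ to correspond to a maximal cone having $\Ff[\sigma,i]$ as a face, not $g=\Ff(f)$, because $\Ff$ need not preserve maximality (e.g.\ $\BA^1\to\BA^2$, $t\mapsto(t,1)$). For comparison, the paper's proof follows the same path. Its Case~2 claims maximality is preserved, which this example refutes. It records $\Phi^\ast$ as $(\Ff,(\phi_f)_{f\in B_S})$, indexed by $B_S$ rather than $B_R$, and cites \cite{ANH}, Theorem~3.6, for inverseness. So it does not supply this step either.
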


\begin{proof}
Let $(S, B)$ be a conical ring. Then $\CS_B$ is an affine simplicial system of fans associated to $(S, B)$ by Theorem~\ref{thm:ring_to_prev}. In particular, the maximal cones of $\CS_B$ are parametrized by the elements $f \in \Gen^D_B(S)$. Thus, the matrix $\CB_B$ of ray generators of $\CS_B$ is given by the rays of $\CS_B$, i.e.\ $\CD_B = S$, and $B_\CA$ is given by the relevant elements corresponding to $\CS_B$ by construction. Thus $(S, B) = (\CD_B, B_\CA)$.

Conversely, let $\CS$ be a system of fans and let $(\CD, B_\CA)$ denote the corresponding conical ring. By definition, the relevant elements of $\CD$ lying in $B_\CA$ correspond to the maximal cones of $\CS$. In particular, $\CS_{B_\CA}$ must have the same maximal cones and hence the same ray generators as $\CS$. Thus $\CS = \CS_{B_\CA}$.

We have already shown that a rational map of conical polynomial rings gives rise to a morphism of prevarieties in Corollary~\ref{cor:functor_special_setting}. Conversely, let $\Tilde{f}\colon X \to Y$ be a morphism of toric prevarieties. Then there are conical rings $(R, B_R)$ and $(S, B_S)$ such that $X = \Proj^{D_S}_{B_S}(S)$ and $Y = \Proj^{D_R}_{B_R}(R)$, giving us the affine systems $\CS_{B_R}$ and $\CS_{B_S}$. In particular, $\Tilde{f}$ gives rise to a map of systems of fans $(F, \Ff)$, where $F \colon N_S \to N_R$ is a lattice morphism (and encodes the data of a group homomorphism $\alpha_F\colon D_R \to D_S$ via duality) and $\Ff\colon \Omega(\CS_{B_S}) \to \Omega(\CS_{B_R})$. Using \cite{paper1}, Lemma 1.30 we distinguish the following cases: 
\begin{description}
    \item[Case 1] For all distinguished points $x_{[\sigma, i]}$ of $X_{\CS_{B_S}}$,  it holds 
    \begin{align*}
        f(x_{[\sigma, i]}) \equ x'_{\Ff[\sigma, i]} \equ x_{[\sigma', i']},
    \end{align*}
    where $[\sigma', i']$ is a maximal cone of $\CS_{B_R}$. Thus $\Ff$ maps maximal cones to maximal cones, that is, relevant elements to relevant elements. We denote the map on relevant elements also by $\Ff$, i.e.\ $\Ff\colon \Rel^{D_S}_{B_S}(S) \to \Rel^{D_R}_{B_R}(R)$. By Proposition 1.3.15 \cite{CLS} and Definition~\ref{def:map_sys_fans} (ii), we get a morphism of affine toric varieties $\Spec(S_{(f)}) \to \Spec(R_{(\Ff(f))})$ for each $f \in B_S$. By standard algebraic geometry, this corresponds to a map of rings $\varphi_f\colon R_{(\Ff(f))} \to S_{(f)}$, as desired.

    \item[Case 2] There is a distinguished point $x_{[\sigma, i]}$ of $X_{\CS_{B_S}}$ such that
    \begin{align*}
        f(x_{[\sigma, i]}) \equ x'_{\Ff[\sigma, i]} \equ x_{[\tau', i']},
    \end{align*}
    where $[\tau', i']$ is not a maximal cone of $\CS_{B_R}$. This contradicts condition (ii) of maps of systems of fans, as in this case the interior of $\tau'$ is empty.
\end{description}
It is left to show that the constructions are inverse to each other on the level of morphisms, which is true by construction (cf.\ \cite{ANH}, Theorem 3.6).
\end{proof}

The previous theorem gives a generalization of \emph{bunched rings}.

\begin{remark}[Bunched Rings]
    In their book, \cite{CRB} construct a so called \emph{bunched ring}, which is an object given by the data $(R, \CF, \Phi)$, where $R$ is an almost free $D$-graded affine $k$-algebra such that $R^\ast = k^\ast$, $\CF$ is a system of pairwise non associated $D$-prime generators for $R$ and $\Phi$ is a true $\CF$-bunch. As they show in §2.2 that a $\CF$-bunch is the Gale dual of a fan, we can easily see how this construction is related to conical rings: Every bunched ring is a special case of a conical ring, where $\CF$ corresponds to the variables of $S$ (i.e.\ generators of $S$ over $S_0$) and $\Phi$ is just the system of fans arising from the relevant elements formed over $\CF$. In particular, the projected $\CF$-faces correspond to the orbit cones by \cite{CRB}, Remark 3.2.2.3, which we know are given by the affine pieces $\Spec(S_{(f)})$ for relevant $f$. Thus, conical rings are a generalization of bunched rings. 
\end{remark}

\begin{corollary}
    If $S$ is a normal noetherian factorially graded polynomial ring over a field, $\Proj^D(S)$ is $\BQ$-factorial. 
\end{corollary}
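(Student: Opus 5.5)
Since $\BQ$-factoriality is local, the plan is to reduce it to the affine toric charts, which are known to be simplicial, and then invoke the standard criterion that a normal toric variety with simplicial cones is $\BQ$-factorial. By definition, $\Proj^D(S) = \bigcup_{f \in \Rel^D(S)} \Spec(S_{(f)})$, and since $S$ is factorially graded and noetherian it suffices to take the union over $f \in \Gen^D(S)$ (Proposition~\ref{prop:Proj_B^D}). So I will first record that $\Proj^D(S)$ is covered by the open affines $\Spec(S_{(f)})$ with $f \in \Gen^D(S)$.

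Next I identify each chart as a simplicial affine toric variety. The discussion preceding~\eqref{eq:cone_def} gives $\Spec(S_{(f)}) \cong \Spec(\BC[\sigma_f^\vee \cap M_S])$ with $\sigma_f = \sigma_{I \setminus J_f}$. The cone $\sigma_f$ is generated by $|I\setminus J_f| = n-r = \rank(M_S)$ vectors and is of maximal dimension, by the lemma stating that $\sigma_f$ is maximal-dimensional for relevant $f\in\Gen^D(S)$. A full-dimensional cone with exactly $\dim N_\BR$ generators has linearly independent generators, so $\sigma_f$ is simplicial; this is also the content of Theorem~\ref{thm:ring_to_prev}, which says $\Proj^D(S)$ is a simplicial toric prevariety. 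Normality of each chart is automatic, because $\sigma_f^\vee\cap M_S$ is a saturated affine semigroup. This is consistent with the normality hypothesis and with the paper's convention that toric varieties are normal.

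It then remains to apply the standard fact (\cite{CLS}, Proposition 4.2.7) that a normal affine toric variety $U_\sigma$ is $\BQ$-factorial if and only if $\sigma$ is simplicial. Concretely, for a Weil divisor $D=\sum a_\rho D_\rho$ on $U_\sigma$ one solves $\langle m, u_\rho\rangle = -a_\rho$ over $\BQ$, which is possible since the $u_\rho$ are linearly independent; clearing denominators makes $kD$ principal, hence Cartier. Every Weil divisor on $\Proj^D(S)$ restricts on each chart to a $\BQ$-Cartier divisor, and being $\BQ$-Cartier is local, so a single multiple that works on all charts at once suffices.

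The main obstacle is that $\Proj^D(S)$ may be non-separated, so this global step must be checked for prevarieties. The finiteness of $\Gen^D(S)$, which holds because $S$ is noetherian, supplies finitely many charts. Taking the least common multiple of the chart-wise indices then gives one $k$ for which $kD$ is locally principal everywhere. Since Weil and Cartier divisors are defined locally on integral normal schemes irrespective of separatedness, no separatedness is needed. The one further point to verify is that torus-invariant divisors suffice, which follows from the usual exact sequence for the class group of a toric variety restricted to each affine chart.
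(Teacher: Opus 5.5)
Your proposal is correct and essentially the paper's argument: the paper simply notes that $\Proj^D(S)$ is simplicial (Theorem~\ref{thm:ring_to_prev}) and normal and then applies \cite{CLS}, Proposition 4.2.7. Your chart-by-chart version, with the least common multiple over the finitely many charts $\Spec(S_{(f)})$, $f\in\Gen^D(S)$, additionally justifies using that proposition on a possibly non-separated prevariety, which the paper leaves implicit since Proposition 4.2.7 is stated for toric varieties coming from a fan.
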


\begin{proof}
    As $\Proj^D(S)$ is simplicial and normal, we may apply \cite{CLS}, Proposition 4.2.7 (also cf.\ \cite{CRB}, Corollary 3.3.1.9).
\end{proof}

In addition, Theorem~\ref{cor:functor_special_setting} gives an algebraic description of toric (pre-)varieties.

\begin{corollary}\label{cor:toric_var_via_ring}
    Let $X = X_\CS$ be a toric prevariety. Then there exists a subset $B \subseteq \Cox(X)_+$, such that the triple $(D = \Cl(X), S = \Cox(X), B)$ determines the toric variety $X$ uniquely.
    While $\Cox(X)$ is often called the `coordinate ring' of $X$, one should keep in mind that the grading by $D$ together with the 
    irrelevant subset $B$ is an indispensable piece of data. 
    In this sense, one may regard the triple $(D, S, B)$ as the `coordinate ring' of $X$.
\end{corollary}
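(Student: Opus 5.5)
The plan is to start from $X = X_\CS$, a simplicial toric prevariety. By Theorem~\ref{thm:functor_conical_rings_toric_prev}, $\CS$ corresponds to the conical ring $(\CD, B_\CA)$. Here the polynomial ring is $S = \BC[x_\rho \mid \rho \in \CS(1)]$, graded by the matrix $\CD$ with $\CD\CB = 0$. The subset $B_\CA \subseteq S_+$ is generated by the relevant monomials attached to the maximal cones of the affine system $\CA$ of $\CS$.

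The first step is to identify this ring with $\Cox(X)$ and the grading group with $\Cl(X)$. For this I would invoke Proposition~\ref{prop:cox_dual}: the group $D$ defined by $\CD$ is isomorphic to the class group of $X_\CS$, and $S$ has one variable per ray, exactly as in the Cox construction. I will take $B := B_\CA$. By Remark~\ref{rem:cox_ideal}, its generators are the Cox monomials $x^{\widetilde{\sigma}}$ of the maximal cones, so $B \subseteq \Cox(X)_+$ is the Cox irrelevant ideal restricted to the affine charts.

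The second step is to recover $X$ from the triple. By Theorem~\ref{thm:ring_to_prev}, $\Proj^D_B(S) = X_{\CS_B}$. The "inverse'' part of Theorem~\ref{thm:functor_conical_rings_toric_prev} gives $\CS_{B_\CA} = \CS$, hence $\Proj^{\Cl(X)}_B(\Cox(X)) \cong X$.

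The third step is uniqueness. Suppose $(D, S, B)$ also arises from another prevariety $X'$, that is, $\Proj^D_B(S) \cong X'$. Then both $X$ and $X'$ are built from the same gluing data $\Spec(S_{(f)})$, $f \in \Gen^D_B(S)$, glued along $\Spec(S_{(fg)})$. This gluing is intrinsic to the triple by Definition~\ref{def:rel_proj} and Proposition~\ref{prop:Proj_B^D}, so $X \cong X'$. Equivalently, the anti-equivalence of Theorem~\ref{thm:functor_conical_rings_toric_prev} is essentially injective on objects.

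The remaining sentences of the corollary are interpretive. I would support them by pointing out that $B$ cannot be dropped: the Brenner--Schröer $S_+$ can give a different, non-separated space, as in the $\Bl_0(\BA^2)$ example. The grading cannot be dropped either, as in Example~\ref{ex:maps_special}.

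The main obstacle is the identification $S \cong \Cox(X)$ for prevarieties rather than varieties. Remark~\ref{rem:cox_ring} warns that the ring attached to a system of fans need not literally be a Cox ring in the sense of \cite{HK}. A related issue is rays occurring with multiplicity in $\CS(1)$. To handle this, I would first try to pass to the Cox ring via Lemma~\ref{lem:subring_localization}. Any subring $S'$ with $S'_+ = B$ has the same degree-zero localizations, so the triple $(\Cl(X), \Cox(X), B)$ with $B$ pulled back along the inclusion still yields $X$. I would also restrict to the situation where the rays of $\CS$ span $N_\BR$, which is what the Cox construction in Proposition~\ref{prop:cox_dual} needs.
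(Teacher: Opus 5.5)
Your proposal is correct and is essentially the paper's argument. You take $B$ generated by the relevant monomials $f$ whose cones $\sigma_f$ are exactly the maximal cones of $\CS$ (Cox's monomials $x^{\widetilde{\sigma}}$), so that $\Proj^{\Cl(X)}_B(\Cox(X)) = X_\CS$, and uniqueness is simply that $\Proj^D_B(S)$ is built from the triple alone. The paper does this directly by computing $\sigma_f$ for all $f \in \Gen^D(S)$, whereas you route it through the inverse statement of Theorem~\ref{thm:functor_conical_rings_toric_prev} and Proposition~\ref{prop:cox_dual}.

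Your restriction to simplicial $X$ with rays spanning $N_\BR$ is the same one the paper's proof uses implicitly when it invokes Theorem~\ref{thm:ring_to_prev}. The fallback through Lemma~\ref{lem:subring_localization} is not needed, because the construction $\CS \mapsto (\CD, B_\CA)$ always produces the full polynomial ring on $\CS(1)$, not a subring like the one in Remark~\ref{rem:cox_ring}.
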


\begin{proof}
If $X$ is toric, let $B \subseteq S_+$ be the irrelevant ideal constructed in \cite{Cox}. In general, we compute $S_+$ and $\sigma_f$ for all $f \in \Gen^D(S)$. Then we take $B \subseteq S_+$ to contain exactly those $f$ whose $\sigma_f$ is contained in the system of fans $\CS$. 
    Then the toric (pre-)variety $X$ is uniquely determined by
    \begin{align*}
       X \equ  \bigcup_{f \in B} \Spec(S_{(f)}) \subseteq \Proj^D(S),
    \end{align*}
    since each $\Spec(S_{(f)})$ is a simplicial toric prevariety (cf.\ Theorem~\ref{thm:ring_to_prev}).
\end{proof}

The above discussion shows that, for conical rings, Proj construction and Cox ring construction are mutually inverse up to canonical isomorphisms (together with $\Cl(X)$ and the irrelevant ideal $B \subseteq S_+$ defining the given (system of) fans). In particular, we can state:

\begin{corollary}\label{cor:Proj_Cox_inverse}
    Let $X$ be a toric prevariety, $S= \Cox(X)$, $D = \Cl(X)$ and let $B \subseteq S_+$ be the irrelevant ideal as defined by Cox (\cite{Cox}, §1). Then $\Proj^D_B(S) = X$.
    Conversely, let $S$ be a multigraded polynomial ring with finitely many variables, that is graded by $D$. Also, let $B \subseteq S_+$ be a subset and $X = \Proj^D_B(S)$. Then $S = \Cox(X)$, $D= \Cl(X),$ and $B$ is exactly the irrelevant ideal as defined by Cox.
\end{corollary}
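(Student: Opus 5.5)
The plan is to treat the two directions separately, with Theorem~\ref{thm:functor_conical_rings_toric_prev} and Corollary~\ref{cor:toric_var_via_ring} doing most of the work.

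For the first direction, let $X = X_\CS$ be simplicial, $S = \Cox(X)$ and $D = \Cl(X)$. By Proposition~\ref{prop:cox_dual}, the ring produced from $\CS$ by the construction $\CS \mapsto (\CD, B_\CA)$ is, up to canonical isomorphism, the Cox ring graded by $\Cl(X)$. Under this identification, the Cox irrelevant ideal is generated by the monomials $x^{\widetilde{\sigma}}$ for the maximal cones $\sigma$ of $\CS$, and by Remark~\ref{rem:cox_ideal} these are exactly the relevant elements $f$ with $\sigma_f = \sigma$; hence $B = B_\CA$. It remains to check $\Proj^D_B(S) = X$. Theorem~\ref{thm:ring_to_prev} gives $\Proj^D_B(S) = X_{\CS_B}$, and the first half of the proof of Theorem~\ref{thm:functor_conical_rings_toric_prev} shows $\CS_{B_\CA} = \CS$. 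Chartwise, each $\Spec(S_{(f)})$ is identified with the affine toric variety of $\sigma_f$ via \eqref{eq:cone_def}, and these charts glue along $\Spec(S_{(fg)})$ exactly as the $X_i$ glue in $X_\CS$.

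For the converse, let $X = \Proj^D_B(S)$. By Theorem~\ref{thm:ring_to_prev}, $X = X_{\CS_B}$. Applying the first direction to $X$ yields the triple $(\Cl(X), \Cox(X), B_{\mathrm{Cox}})$. Inverting $\CS_B \mapsto (\CD_B, B_\CA)$ as in Theorem~\ref{thm:functor_conical_rings_toric_prev} gives $(S, B) = (\CD_B, B_\CA)$. The rays of $\CS_B$ correspond to the variables of $S$, so $S = \Cox(X)$, and Proposition~\ref{prop:cox_dual} identifies $D$ with $\ker$ of the Gale-duality map, i.e.\ with $\Cl(X)$.

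The main obstacle is this converse, because it is only true up to conical-ring isomorphism and under genericity hypotheses. Three issues arise:
\begin{enumerate}
\item A variable $T_i$ whose ray does not occur in any cone $\sigma_f$ with $f \in B$ gives a ring strictly larger than the Cox ring. Example~\ref{ex:sys_fan_P^2} shows this: $(S, \langle xw, yw, zw\rangle)$ yields $\BP^2$, but $S$ has four variables.
\item The group $D$ must be the effective, factorial grading. Remark~\ref{rem:subring_not_unique} shows that otherwise only birational identification holds.
\item $B$ must be saturated with respect to the cones actually used.
\end{enumerate}
I would therefore first use Lemma~\ref{lem:subring_localization} and Corollary~\ref{cor:subring_loc} to replace $(S, B)$ by a conical ring in which every variable divides some element of $\Gen^D_B(S)$ and the grading is effective. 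After this reduction, the number of variables equals $|\CS_B(1)|$ and the Gale dual is determined. I would then read the statement as an equality up to the canonical isomorphisms of $\mathrm{RatConRing}$, and explicitly flag the example above as showing that literal equality $S = \Cox(X)$ fails without this reduction.
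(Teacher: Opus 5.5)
The paper gives no separate argument for this corollary. It is read off from Theorem~\ref{thm:functor_conical_rings_toric_prev}, Corollary~\ref{cor:toric_var_via_ring} and Proposition~\ref{prop:cox_dual}, and your first direction follows exactly that route. You are right that the converse fails as literally stated (Example~\ref{ex:sys_fan_P^2} refutes it). You should flag just as explicitly that the first direction needs $X$ simplicial, which you assume silently. In Example~\ref{ex:simplicial_needed}, Cox's ideal is $(1)\not\subseteq S_+$, so $\Proj^D_B(S)$ is all of $\Proj^D(S)$, which is simplicial and hence $\neq U_\sigma$.

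Moreover, invoking Proposition~\ref{prop:cox_dual} to identify $(\CD,B_\CA)$ with $(\Cox(X),\Cl(X))$ is unsound. $\CD$ is a basis of the left kernel of $\CB$, so the grading group it produces is always free. For $X=\BP^2/\mu_3$ (rays $(2,-1),(-1,2),(-1,-1)$ in $\BZ^2$) one has $\Cl(X)\cong\BZ\oplus\BZ/3\BZ$, whereas the construction returns $\BC[x,y,z]$ with $\CD=(1,1,1)$, whose Proj is $\BP^2$. Argue the first direction with the genuine Cox ring instead. For a simplicial cone $\sigma$, extend $\sigma(1)$ to $\dim N_\BR$ linearly independent rays. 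By Gale duality the degrees of the $r$ remaining variables are linearly independent, and all of them divide $x^{\widehat{\sigma}}$, so $x^{\widehat{\sigma}}$ is relevant. Cox's identification $S_{(x^{\widehat{\sigma}})}\cong\BC[\sigma^\vee\cap M]$ then gives the charts, glued along $S_{(x^{\widehat{\sigma}}x^{\widehat{\tau}})}$.

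The genuine gap is your repair of the converse.

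First, the reduction condition is backwards. Since $\sigma_f=\sigma_{I-J_f}$, the ray of $T_i$ lies in $\sigma_f$ if and only if $T_i\nmid f$. So the superfluous variables are those dividing \emph{every} element of $\Gen^D_B(S)$, like $w$ for $B=\langle xw,yw,zw\rangle$. That example already satisfies your condition that every variable divides some element of $\Gen^D_B(S)$.

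Second, Lemma~\ref{lem:subring_localization} and Corollary~\ref{cor:subring_loc} are the wrong tools. They pass to a subring with the inherited $D$-grading, typically not a polynomial ring (e.g.\ $\BC[x,y,z,xw,yw]$, cf.\ Remark~\ref{rem:cox_ring}). What removes $w$ is dehomogenization $w\mapsto 1$ together with $D\to D/\langle\deg w\rangle$.

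Third, and decisively, even after a correct reduction and with an effective grading, $(S,D)$ need not be $(\Cox(X),\Cl(X))$. Two examples satisfy all your normalizations: Example~\ref{ex:Z/2Z_fan} ($S=\BC[T]$ graded by $\BZ/2\BZ$, $\Proj=\BA^1$, but $\Cl(\BA^1)=0$), and $\BC[x,y]$ with $\deg x=1$, $\deg y=2$ ($\Proj=\BP(1,2)\cong\BP^1$, whose Cox ring has degrees $1,1$). The missing hypothesis is that the grading is almost free: for every $i$ the degrees $\deg T_j$, $j\neq i$, generate $D$. Equivalently, $\pr_i|_{M_S}$ is primitive in $N_S$; in both examples one of them is twice a primitive vector.

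Falling back on ``equality up to isomorphism in $\mathrm{RatConRing}$'' does not rescue the statement but empties it. Such isomorphisms only see the glued degree-zero localizations, so $(\BC[x,y,z,w],\langle xw,yw,zw\rangle)$ over $\BZ^2$ is already isomorphic there to $(\BC[x,y,z],(x,y,z))$ over $\BZ$.

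What one can actually prove, for monomial $B$, is the following. Suppose no variable divides every element of $\Gen^D_B(S)$ and the grading is almost free. Then $Y=\bigcup_f\Spec(S_f)$ has complement of codimension at least $2$ in $\Spec(S)$, and $Y\to X$ is a good quotient that is free in codimension one, so $(S,D)\cong(\Cox(X),\Cl(X))$ as graded rings. Even then, $B$ agrees with Cox's irrelevant ideal only up to radical (already $B=(x^2,y^2,z^2)$ gives $\BP^2$).
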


\subsection{Subtorus Actions}
In this short section, we want to discuss subtorus actions on toric prevarieties and relate them to the so called \emph{Chow quotient}. The Chow quotient of a toric (pre)variety is characterized by being the smallest toric (pre)variety that maps onto all GIT-quotients.
In fact, we do not have to do much, as it is rather a consequence of known results.
By \cite{ANH}, Theorem 7.2, we already know that every subtorus action on a toric prevariety admits a toric prequotient. Even more, \cite{ANH}, Corollary 8.3 states that every toric prevariety is given by the image of a categorical prequotient of an open toric subvariety of some $k^s$.
However, due to Corollary~\ref{cor:good_preq}, $\Proj^D(S)$ is a good prequotient, so we actually hope that the conditions of \cite{ANH}, Theorem 6.7 are always met, allowing us to identify $X // H =: \Proj^D(S, H)$ with the quotient system of fans constructed in \cite{ANH}, §6: 

Let  $X = \Proj^D(S)$ be a simplicial toric prevariety, given by a simplicial system of fans $\CS$ in the lattice $N_S$ and a subtorus $H \le \Spec(S_0[D])$ corresponds to a primitive sublattice $L \subseteq N_S$ (cf.\ \cite{CLS}, Exercise 1.1.9).
For $f \in \Gen^D(S)$, let $\sigma_{f, L}$ be the largest face of $\sigma_f$ such that $L_\BR \cap \sigma_{f, L}^\circ \neq \emptyset$. Next, let $L' = N_S \cap (L_\BR + \sigma_{f, L})$ and consider the projection $P \colon N_S \to N_S / L'$. Then $\sigma_f' = P_\BR(\sigma_f)$ is a strictly convex simplicial cone in $N/L'$. In particular, the morphism $X_{\sigma_f} \to X_{\sigma_f'}$ associated to $P$ is the algebraic quotient for the action of $H$ on $X_{\sigma_f}$ by \cite{ANH}, Remark 6.6.

The following results strongly depend on the assumption that the toric prevariety $X$ is given in terms of a \emph{full} Proj, i.e.\ $X=\Proj^D(S)$. In particular, we already know that $S$ is not the Cox ring of $X$ in general.

\begin{theorem}
Let $X = \Proj^D(S)$ be a simplicial toric prevariety and $H \le \Spec(S_0[D])$ a subtorus. Then the action of $H$ on $X$ admits a good prequotient.
 In particular,
 \begin{align*}
    \Proj^D(S, H) \equ \bigcup_{f \in \Rel^D(S)} \Spec( (S_f)^H ) \equ \bigcup_{f \in \Rel^D(S)} \Spec(S_f) // H,
\end{align*}
corresponds to the system of fans in $N/L'$ given by the collection of $\sigma'_f$ for $f \in \Gen^D(S)$, where $\sigma'_{fg} = \{\{0\}\}$.
\end{theorem}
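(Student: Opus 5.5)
We reduce the statement to \cite{ANH}, Theorem 6.7, applied to the affine system of fans $\CS = (\Delta_f, \Delta_{fg})_{f,g \in \Gen^D(S)}$ from Theorem~\ref{thm:ring_to_prev}, and then translate the resulting quotient system back into ring-theoretic terms.

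\textbf{Step 1: the chartwise quotients.} For each $f \in \Gen^D(S)$, the chart $\Spec(S_{(f)})$ is the affine simplicial toric variety $X_{\sigma_f}$ with $\sigma_f = \sigma_{I\setminus J_f}$. By \cite{ANH}, Remark 6.6, the toric morphism $X_{\sigma_f} \to X_{\sigma'_f}$ induced by $P\colon N_S \to N_S/L'$ is the algebraic quotient for the action of $H$ on $X_{\sigma_f}$.

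Algebraically, this quotient is $\Spec$ of the $H$-invariants. We check the middle equality of the display chartwise. The torus $H \le \Spec(S_0[D])$ acts on $S_f$ through the grading, so $(S_f)^H$ is the subring of $S_f$ spanned by the homogeneous pieces whose degrees lie in the kernel of the character map $D \to X(H)$. In particular, $S_{(f)} \subseteq (S_f)^H$. Because $H$ is linearly reductive, $\Spec((S_f)^H) = \Spec(S_f)/\!/H$.

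Comparing characters under the Gale duality of Remark~\ref{rem:cones_connect} (invariants correspond to $\sigma'^{\vee}_f \cap (L')^{\perp}$), we identify $\Spec((S_f)^H)$ with $X_{\sigma'_f}$. For non-generic relevant $f$, each $\Spec(S_f)$ is a localization of some $\Spec(S_g)$ with $g \in \Gen^D(S)$, so the union may be taken over $\Gen^D(S)$ alone.

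\textbf{Step 2: gluing.} Next we check that the quotients glue to a good prequotient. Since $\Proj^D(S)$ is a good prequotient by Corollary~\ref{cor:good_preq}, and the charts of the affine system $\CS$ intersect only in $\Delta_{fg} = \{\{0\}\}$, the gluing of $X_{\sigma_f}$ and $X_{\sigma_g}$ takes place along the big torus $T_N$.

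The torus quotient $T_N \to T_{N/L'}$ is compatible with both chart quotients. So setting $\sigma'_{fg} = \{\{0\}\}$, the maps $X_{\sigma_f} \to X_{\sigma'_f}$ glue to a map $X \to X_{\CS'}$. Here $\CS'$ is a system of fans: condition (i) of Definition~\ref{def:sys_fans} is trivial, and (ii) holds because all pairwise intersections are the zero cone.

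The resulting map is $H$-invariant and affine, since preimages of the affine charts are the affine $X_{\sigma_f}$. Condition (2) of Definition~\ref{def:toric_prequot} holds on each chart by Step 1, and it holds on arbitrary opens because the structure sheaf is a sheaf and invariants commute with restriction. Alternatively, we verify the hypotheses of \cite{ANH}, Theorem 6.7 directly and read off the quotient system constructed in \cite{ANH}, §6.

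\textbf{Main obstacle.} The delicate point is that $L'$ is defined using the face $\sigma_{f,L}$, so it depends on $f$. A priori the cones $\sigma'_f$ live in different quotient lattices $N/L'_f$, and we need a common lattice to glue. We would first try to show that for the full Proj, where every generic relevant $f$ occurs, the minimal face meeting $L_\BR$ can be chosen uniformly. Failing that, we would quotient by the sum of all $L'_f$ and check that $\sigma'_f$ stays strictly convex. This is exactly where the hypothesis $X = \Proj^D(S)$, rather than $\Proj^D_B(S)$, enters, through \cite{ANH}, Theorem 6.7.
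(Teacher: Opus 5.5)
Your outline is essentially the paper's own proof. It takes the chartwise algebraic quotients $X_{\sigma_f}\to X_{\sigma'_f}$ from \cite{ANH}, Remark~6.6, identifies them with invariant rings, and then argues that because $\Delta_{fg}=\{\{0\}\}$ the hypothesis of \cite{ANH}, Theorem~6.7 holds ``for trivial reasons'', so the chart quotients glue. The step you single out as the main obstacle is a genuine gap. The paper's proof does not close it either, and it cannot be closed.

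First, the premise of your Step~2 is false. In $\Proj^D(S)$ the charts $\Spec(S_{(f)})$ and $\Spec(S_{(g)})$ meet in $\Spec(S_{(fg)})$. This is the affine toric variety of the common face spanned by the rays indexed by $(I\setminus J_f)\cap(I\setminus J_g)$, not the big torus. For $S=\BC[x,y,z]$ with the standard grading, $\Spec(S_{(xy)})\cong\BG_m\times\BA^1$ corresponds to the ray $\Cone(-e_1-e_2)=\sigma_x\cap\sigma_y$. Gluing the three charts along $T_N$ alone would not give $\BP^2$.

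Second, grant trivial overlaps anyway. The chart quotients then glue along $T_N/\!/H$ to an affine $H$-invariant map only if $T_N$ is $H$-saturated in every $X_{\sigma_f}$, which means $\sigma_{f,L}=\{0\}$ for all $f$. So the $f$-dependence of $L'$ is not bookkeeping that a clever choice of lattice can fix. It is exactly the compatibility condition of \cite{ANH}, Theorem~6.7 failing.

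Concretely, take $X=\BP^2=\Proj^{\BZ}(\BC[x,y,z])$ and $L=\BZ e_1\subseteq N_S$, i.e.\ the action $t\cdot[x:y:z]=[tx:y:z]$. Then $\sigma_{y,L}=\sigma_{z,L}=\Cone(e_1)$, so $L'=\BZ e_1$ and these two chart quotients are copies of $\BA^1$. But $-e_1=e_2+(-e_1-e_2)\in\sigma_x^\circ$, so $L'_x=N_S$ and the quotient of $U_x$ is a point. A uniform choice of face is therefore impossible, and dividing by $\sum_f L'_f=N_S$ gives $\BP^2\to\mathrm{pt}$, which is not affine.

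In fact no good prequotient exists at all. For $(y,z)\neq(0,0)$ the closure of the orbit of $[1:y:z]$ contains both $[1:0:0]$ and the fixed point $[0:y:z]$. Hence every $H$-invariant morphism from $\BP^2$ to a prevariety is constant. The statement therefore needs an extra hypothesis on the position of $L$ relative to the cones $\sigma_f$ (the condition in \cite{ANH}, Theorem~6.7), and this is not automatic for a full $\Proj^D(S)$.

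Separately, Step~1 conflates two tori. A subtorus acting through the $D$-grading lies in the quasitorus $\Spec(S_0[D])$, which acts trivially on $X$. In that case $\Spec((S_f)^H)$ is a quotient of $\Spec(S_f)$ of dimension $n-\dim H\ge n-r\ge\dim X_{\sigma'_f}$, so in general no comparison of characters can identify the two. The chart quotient attached to $L$ is instead $\Spec\bigl((S_{(f)})^H\bigr)=\Spec\BC[\sigma_f^\vee\cap L^\perp\cap M_S]$, where $H\subseteq T_N$ is the subtorus determined by $L$.
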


\begin{proof}
    As by construction of $\CS$ in Section~\ref{sec:sys_fans}, $\sigma_{fg} = \{\{0\}\}$ for all $f, g \in \Gen^D(S)$. Thus, the condition from \cite{ANH}, Theorem 6.7 holds true for trivial reasons. In particular, the good prequotient of $X$ by $H$ is given by the collection of $\sigma'_f$ for $f \in \Gen^D(S)$, where $\sigma'_{fg} = \{\{0\}\}$. By \cite{paper1}, Lemma 1.15 (4) and \cite{CLS}, Proposition 11.2.14, we deduce that the toric variety corresponding to $\sigma'_f$ is $\Spec((S_f)^H)$.
\end{proof}

We get the following generalization of \cite{KSZ}, Theorem 2.1, for free.

\begin{corollary}
 Let $X$ be a simplicial toric prevariety and $S$ be a $D$-graded polynomial ring such that $X = \Proj^D(S)$. Consider a subtorus $H \le \Spec(S_0[D])$.
    
    Then $\Proj^D(S, H)$ coincides with the chow quotient of $X$ by $H$.
    In particular, the Chow quotient of a simplicial toric prevariety is again a simplicial toric prevariety.
\end{corollary}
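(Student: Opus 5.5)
The plan is to reduce the statement to the characterization of the Chow quotient for toric varieties in \cite{KSZ}, Theorem 2.1, combined with the previous theorem. That theorem already identifies $\Proj^D(S, H)$ with the toric prequotient given by the system of fans $(\sigma'_f)_{f\in\Gen^D(S)}$ in $N_S/L'$, with $\sigma'_{fg}=\{\{0\}\}$. So it only remains to show that this prequotient has the universal property defining the Chow quotient: it maps onto every GIT-quotient of $X$ by $H$, and it is minimal with this property.

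\textbf{Step 1: describe the GIT quotients.} Every GIT quotient of $X=\Proj^D(S)$ by $H$ is, chart by chart, given by an open $H$-stable subset of $X$. Such a subset is a union of the affine pieces $\Spec(S_{(f)})$, so it is of the form $\Proj^D_B(S)$ for a subset $B\subseteq S_+$ (Lemma~\ref{lem:subring_localization}, Definition~\ref{def:rel_proj}). Its quotient is the union of the $\Spec((S_f)^H)$ for $f\in\Gen^D_B(S)$, again by the previous theorem applied to the conical ring $(S,B)$.

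\textbf{Step 2: construct the maps.} Since $\Gen^D_B(S)\subseteq\Gen^D(S)$, there are natural maps of systems of fans from $(\sigma'_f)_{f\in\Gen^D(S)}$ onto these sub-collections, in the sense of Definition~\ref{def:map_sys_fans}. These induce toric morphisms $\Proj^D(S,H)\to \Proj^D_B(S)/\!/H$ by Lemma~\ref{lem:map_on_dinsting_point}. Their compatibility follows from the gluing of the local maps $(S_f)^H\to (S_{fg})^H$.

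\textbf{Step 3: minimality.} Any toric prevariety mapping onto all GIT quotients must contain each chart $\Spec((S_f)^H)$, $f\in\Gen^D(S)$. This is because each such chart occurs in the GIT quotient attached to $B=(f)$. Hence it dominates the union, which gives minimality.

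Simpliciality of $\sigma'_f=P_\BR(\sigma_f)$ was noted before the previous theorem. Therefore the quotient is a simplicial toric prevariety by Theorem~\ref{thm:ring_to_prev}, applied after choosing a conical ring for it via Proposition~\ref{prop:cox_dual}.

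The main obstacle is Step 3. The Chow quotient is classically defined as the closure of the generic orbit in a Chow variety, or as a normalized inverse limit of GIT quotients. For non-separated prevarieties, the identification with the ``smallest toric prevariety mapping onto all GIT quotients'' has to be taken as the definition, as the paper does in the preamble of this subsection. I would adopt that definition explicitly and argue chart-wise, where the separated case reduces to \cite{KSZ}.
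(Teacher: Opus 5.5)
\textbf{Your proposal has a genuine gap.}

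\textbf{Step 2 points the wrong way.} Since $\Gen^D_B(S)\subseteq\Gen^D(S)$, the collection $(\sigma'_f)_{f\in\Gen^D_B(S)}$ is a \emph{subsystem} of $(\sigma'_f)_{f\in\Gen^D(S)}$. The natural toric morphism it induces is therefore the open embedding $\Proj^D_B(S)/\!/H\hookrightarrow\Proj^D(S,H)$, not a surjection in the other direction. Consider a map of systems of fans from the whole system to the subsystem in the sense of Definition~\ref{def:map_sys_fans}, with $F$ the identity. It would require the relative interior of every $\sigma'_f$ with $f\notin B$ to lie in the relative interior of some $\sigma'_g$ with $g\in B$, and nothing forces this. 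Lemma~\ref{lem:map_on_dinsting_point} does not help here, since it presupposes such a map rather than constructing one.

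\textbf{The root cause is Step 1.} Step 1, and its use in Step 3 with `$B=(f)$', treats every $\Proj^D_B(S)/\!/H$ as a GIT quotient. GIT quotients come only from semistable loci of linearizations, which are very special choices of $B$. Take $H$ trivial and $X=\BP^2$. Then $L=0$, so $\sigma'_f=\sigma_f$ and the quotient system is just the system of $X$, which is also the Chow quotient. Your argument would then require a surjection $\BP^2\to\Spec(S_{(x)})\cong\BA^2$, which does not exist.

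\textbf{Step 3 does not give what is needed.} Mapping onto a GIT quotient does not mean containing its charts as open subsets. Even containing the charts $\Spec((S_f)^H)$ would not produce a morphism to or from the glued prevariety $\Proj^D(S,H)$, so no minimality or terminality follows.

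\textbf{What is missing is the fan-level comparison the paper's proof rests on.} In \cite{KSZ} the Chow quotient is described by a fan in the quotient lattice whose cones are closures of equivalence classes of admissible vectors. This fan \emph{refines} the fans of all GIT quotients, and that is why the Chow quotient maps onto each of them; it is not because it contains them as charts. The paper reduces the corollary to showing that this KSZ quotient fan is a special case of the quotient system in \cite{ANH}, Theorem~6.7, asserting that each $\sigma'_f$ is exactly such a closure. Only after that identification does it invoke the universal property. To repair your approach you would need two things:
\begin{enumerate}
\item Restrict Step 1 to those $B$ that actually arise as semistable loci.
\item Show that the system $(\sigma'_f)_{f\in\Gen^D(S)}$ is the common refinement of the corresponding quotient fans, which is precisely the identification the paper makes.
\end{enumerate}
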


\begin{proof}
    First note that $S$ is not the Cox ring of $X$, and that $S$ intrinsically contains the information of the toric prevariety $X$, given in terms of the relevant spectrum of $S$.
    The proof essentially reduces to showing that the quotient fan construction in \cite{KSZ}, §1, is a special case of the quotient system of fans construction in \cite{ANH}, Theorem 6.7. But by construction, $\sigma'_f$ for $f \in \Rel^D(S)$ is exactly the closure of the equivalence class of admissible vectors $n \in \sigma$.  
    By the universal property of the Chow quotient and the explicit description of the $\Proj^D(S, H)$ quotient, the two constructions coincide.
\end{proof}